\newtheorem{thm}{Theorem}[section] 
\newtheorem{cor}[thm]{Corollary}
\newtheorem{lem}[thm]{Lemma}
\newtheorem{pro}[thm]{Proposition}
\newtheorem{den}[thm]{Definition}
\newtheorem{oss}[thm]{Remark}
\numberwithin{equation}{section}
\newcommand{\LL}{\mathrm{L}}
\begin{document}
	
\title[Smoothing effects for the filtration equation with different powers]{Smoothing effects for the filtration equation \\ with different powers}

\author {Alin Razvan Fotache, Matteo Muratori}

\address{Alin Razvan Fotache: Dipartimento di Matematica, Politecnico di Milano, Piazza Leonardo da Vinci 32, 20133 Milano, Italy}

\email {alinrazvan.fotache@polimi.it}

\address{Matteo Muratori: Dipartimento di Matematica ``F. Casorati'', Universit\`a degli Studi di Pavia, Via A. Ferrata 5, 27100 Pavia, Italy}

\email {matteo.muratori@unipv.it}

\begin{abstract}
We study the nonlinear diffusion equation $ u_t=\Delta\phi(u) $ on general Euclidean domains, with homogeneous Neumann boundary conditions. We assume that $ \phi^\prime(u) $ is bounded from below by $ |u|^{m_1-1} $ for small $ |u| $ and by $ |u|^{m_2-1} $ for large $|u|$, the two exponents $ m_1,m_2 $ being possibly different and larger than one. The equality case corresponds to the well-known porous medium equation. We establish sharp short- and long-time $ \LL^{q_0} $-$ \LL^\infty $ smoothing estimates: similar issues have widely been investigated in the literature in the last few years, but the Neumann problem with different powers had not been addressed yet. This work extends some previous results in many directions.
\end{abstract}


\keywords{Filtration equation; porous medium equation; Neumann problem; smoothing effects; asymptotic behaviour; Sobolev inequalities; Gagliardo-Nirenberg inequalities; Poincar\'e inequality; mean value; Moser iteration.}

\maketitle	
	
\section{Introduction}\label{sect: intro}
	
The present paper is devoted to the study of \emph{smoothing} and \emph{asymptotic} properties for solutions of the following \emph{filtration equation} with homogeneous \emph{Neumann} boundary conditions:	
\begin{equation}\label{pb}
	\begin{cases}
		u_t=\Delta\phi(u) & \textrm{in } \Omega\times \mathbb{R}^+ \, , \\
		\frac{\partial \phi(u)}{\partial n}=0 & \textrm{on } \partial\Omega\times \mathbb{R}^+ \, , \\
		u(0)=u_0  & \textrm{in } \Omega \, ,
	\end{cases}
\end{equation}
where $ \phi : \mathbb{R} \mapsto \mathbb{R} $ is a continuous and increasing function vanishing at zero, $ \Omega $ is a general domain of $ \mathbb{R}^N $ (not necessarily bounded or regular) and $ u_0 $ is an initial datum having suitable integrability properties that we shall specify below. Keeping in mind the widely studied case $ \phi(u)=|u|^{m-1}u $ (let $ m>1 $), we can also refer to \eqref{pb} as \emph{generalized porous medium equation}, in agreement with \cite{Vbook}. In fact we shall assume throughout, with the exception of Section \ref{sect: well}, that $\phi$ is $ C^1(\mathbb{R})$ and satisfies the following hypotheses: 
\begin{gather}  \label{cond-phi-1}
\phi(0) = 0 \, , \\ \label{cond-phi-2}
c_1 \, |u|^{m_1-1}  \le \phi^\prime(u) \quad \forall u: \, |u| \in [0,1] \, , \\  \label{cond-phi-3}
c_2 \, |u|^{m_2-1}  \le \phi^\prime(u) \quad \forall u: \, |u|>1  \, , 
\end{gather}
for some exponents $m_1, m_2>1$ and positive constants $ c_1,c_2 $. In other words, if we think of \eqref{cond-phi-2}--\eqref{cond-phi-3} as equalities, we are allowing \eqref{pb} to be like a porous medium equation with exponent $m_1$ where the solution is small and like a porous medium equation with another exponent $m_2$ where the solution is large. We shall see that $ m_2 $ is associated with short-time behaviour, whereas $m_1$ is associated with long-time asymptotics. It turns out that, to our purposes, the only requirements that count are bounds from \emph{below} on $ \phi^\prime $ like \eqref{cond-phi-2}--\eqref{cond-phi-3}, so that actually $ \phi $ may significantly deviate from powers. 

Recently, as concerns the straight porous-medium nonlinearity $ \phi(u)=|u|^{m-1}u $, in \cite[Theorem 3.2]{GM13} it has been proved that, if $ \Omega $ is bounded and regular and the spatial dimension $N$ is greater than or equal to $3$, the $ \LL^{q_0} $-$ \LL^{\infty} $ smoothing effect 
\begin{equation}\label{stimaPrecisaMezziPorosi}
\left\| u(t) \right\|_{\infty} \leq K \left(t^{-\frac{N}{2q_0+N(m-1)}} \left\| u_0 \right\|_{q_0}^{\frac{2q_0}{2q_0+N(m-1)}} + \left\| u_0 \right\|_{q_0} \right) \quad \forall t>0 
\end{equation}
holds for all $ q_0 \in [1,\infty) $ and a suitable $K>0$. As for long-time asymptotics, such estimate can be improved depending on whether $ \overline{u}_0=0 $ or $ \overline{u}_0 \neq 0 $, where $ \overline{u}_0 $ is the mean value of the initial datum. In the case $ \overline{u}_0=0 $, it is shown in \cite[Theorem 4.1]{GM13} that
\begin{equation}\label{stimaPrecisaMezziPorosi-1}
\left\| u(t) \right\|_{\infty} \leq K_1 \, t^{-\frac{N}{2q_0+N(m-1)}} \left( K_2 \, t + \left\| u_0 \right\|_{q_0}^{1-m} \right)^{-\frac{2q_0}{(m-1)[2q_0+N(m-1)]}} \quad \forall t>0 
\end{equation} 
holds for $ q_0 \in [1,\infty) $ and suitable $ K_1,K_2>0 $, whereas in the case $ \overline{u}_0 \neq 0 $ \cite[Theorem 4.3]{GM13} establishes that
\begin{equation}\label{stimaPrecisaMezziPorosi-2}
\left\| u(t) - \overline{u}_0 \right\|_{\infty} \leq G \, e^{-\frac{m\left| \overline{u}_0 \right|^{m-1}}{C_P^2}\,t} \quad \forall t \ge 1 \, ,
\end{equation}
where $ G $ is a suitable positive constant and $ C_P>0 $  is the \emph{best constant} in the \emph{Poincar\'e} inequality
\begin{equation}\label{dis-poin}
\left\| f-\overline{f} \right\|_2 \le C_P \left\| \nabla f \right\|_2 \quad \forall f \in H^1(\Omega) \, , \quad \overline{f}:=\frac{\int_\Omega f(x) \, dx}{|\Omega|} \, .
\end{equation}  
The above estimates were obtained by only exploiting the standard \emph{Sobolev} inequality
\begin{equation}\label{Sob}
\left\| f \right\|_{2^\star} \le C_\star \left( \left\| \nabla f \right\|_{2} + \left\| f \right\|_2 \right) \quad \forall f \in H^1(\Omega) \, , \quad 2^\star := \frac{2N}{N-2} 
\end{equation}  
and \eqref{dis-poin}, the latter in order to get \eqref{stimaPrecisaMezziPorosi-1} and \eqref{stimaPrecisaMezziPorosi-2}. Indeed such results could be extended to the case of \emph{weighted} porous medium equations (or to rougher domains), subject to the validity of the analogues of \eqref{Sob} and \eqref{dis-poin} in the corresponding framework (see \cite[Section 5]{GM13}). Afterwards, it was proved in \cite{GM14} that \eqref{stimaPrecisaMezziPorosi}--\eqref{stimaPrecisaMezziPorosi-2} are still true in low dimensions, up to taking advantage of \emph{Gagliardo-Nirenberg} inequalities. We stress that both in \cite{GM13} and \cite{GM14} the hypothesis $ |\Omega|<\infty $ was essential. 

Here we shall assume that $ \Omega $ is any domain of $ \mathbb{R}^N $ that supports the following \emph{Gagliardo-Nirenberg-Sobolev} inequalities:
\begin{equation}\label{NGN}
\left\| f \right\|_r \le C_S \left( \left\| \nabla{f} \right\|_2 + \left\| f \right\|_2 \right)^{\vartheta(s,r,N)} \left\| f \right\|_s^{1-\vartheta(s,r,N)} \quad \forall f \in H^1(\Omega) \cap \LL^s(\Omega) 
\end{equation}
with
\begin{equation}\label{NGN-parameters-theta}
\vartheta(s,r,N) := \frac{2N\,(r-s)}{r\,[2N-s(N-2)]} \, ,
\end{equation}
where if $ N=1 $ or $ N=2 $ we suppose that $ r,s $ can vary subject to
\begin{equation}\label{NGN-parameters-r-s-1}
 0 < s < r < \infty \, ,
\end{equation} 
whereas in the case $ N\ge 3 $ we suppose they can vary subject to
\begin{equation}\label{NGN-parameters-r-s-2}
0 < s < r \le 2^\star \quad \textrm{or} \quad 2^\star \le r < s < \infty \, .
\end{equation}
The positive constant $C_S$ is required to be bounded independently of $ r,s $ as long as the latter range in compact subsets of $ (0,\infty) $. By means of Young's inequality, it is straightforward to deduce from \eqref{NGN} the validity of
\begin{equation}\label{NGN-bis}
\left\| f \right\|_r \le C_S \left( \left\| \nabla{f} \right\|_2 + \left\| f \right\|_s \right)^{\vartheta(s,r,N)} \left\| f \right\|_s^{1-\vartheta(s,r,N)} \quad \forall f \in H^1(\Omega) \cap \LL^s(\Omega) 
\end{equation}
under the additional constraint $ s \le 2 $ (for another constant $ C_S $ as above that we do not relabel), which will turn out to be useful in the sequel.  

Inequalities \eqref{NGN} are not chosen by chance. In the seminal paper \cite{BCLS} it was established (in more abstract contexts actually) that the validity of \eqref{NGN} for a \emph{single} pair $ (r,s) $ is equivalent to the validity of the whole family, i.e.~of \eqref{NGN} itself for \emph{all} $r,s$ complying with \eqref{NGN-parameters-r-s-1} or \eqref{NGN-parameters-r-s-2}, depending on the spatial dimension. Note that such result readily follows by interpolation in the special case where one picks the Sobolev inequality \eqref{Sob} as a representative of the family (provided $ N \ge 3 $). Furthermore, it is well known that \eqref{NGN} holds for regular, compactly supported functions on $ \mathbb{R}^N $ (namely in $ \mathcal{D}(\mathbb{R}^N) $) with no additional $ \LL^2 $ norm in the right-hand side, and the latter are \emph{equivalent} to a precise power-rate time decay for the associated \emph{heat kernel} (see e.g.~\cite[Chapter 2]{Davies} and Remark \ref{rem: dirichlet} below). As a consequence, they hold in the form \eqref{NGN} on all Euclidean domains having the \emph{extension} property; more in general, at least in dimension $ N \ge 3 $, they hold on Euclidean domains complying with the \emph{cone} condition. For such results, we refer the reader e.g.~to the monograph \cite[Chapters 4 and 5]{AF}. 
See also the classical, celebrated papers \cite{Gag1,Nir} for a thorough analysis on the validity of this kind of inequalities in the Euclidean setting.  

On the other hand, the Poincar\'e inequality \eqref{dis-poin} only makes sense on finite-measure domains and can actually be shown to hold on any such domain supporting \eqref{NGN}, as we shall prove in Proposition \ref{pro-imp} below. This is strictly related to the \emph{local} compactness of the embedding of $ H^1(\Omega) $ into $ \LL^2(\Omega) $ due to Rellich's Theorem.

%

\medskip 

\textbf{Previous results.} There is a huge literature concerned with the topics addressed here. Accordingly, with no claim at all for completeness, below we quote some of the most relevant papers.

The Neumann problem for the porous medium equation, before \cite{GM13,GM14}, had not widely been investigated. In \cite{A1} some $ \LL^\infty $ estimates for a similar problem with a reaction term were proved (though without establishing $ \LL^{q_0} $-$ \LL^\infty $ regularizing effects). A remarkable paper was then \cite{AR}, where the authors obtained (almost) sharp asymptotic results, as $ t \to \infty $, on bounded regular domains and for initial data in $ \LL^\infty(\Omega) $. In particular, they showed how to handle separately the zero-mean and nonzero-mean cases. In \cite{BG05} smoothing effects first appeared for the Neumann problem (by means of pure functional inequalities), but they turned out not to be fully sharp as pointed out in \cite{GM13}. 

As for long-time asymptotics, stabilization towards the mean value, in agreement with \eqref{stimaPrecisaMezziPorosi-1}--\eqref{stimaPrecisaMezziPorosi-2}, is not a new phenomenon: see e.g.~\cite{EK05} for heat-type equations with density vanishing at infinity in one dimension, and \cite{EKT} for similar results (in higher dimensions) where the degeneracy lies in the diffusion coefficients. In \cite{DGGW} and \cite{GMP13} convergence to the mean value was studied for weighted porous medium equations, by means of Poincar\'e-type inequalities. 

The literature related to smoothing effects in the case of \emph{Dirichlet}-type problems (or problems on the whole space) is more extensive: see \cite{Vsmooth} as a comprehensive reference. We refer to \cite{BGV} and \cite{GM16} for $ \LL^{q_0} $-$ \LL^\infty $ smoothing effects on \emph{Cartan-Hadamard} manifolds, in the fast-diffusion ($ m<1 $) and porous-medium case, respectively. As regards weighted porous medium equations, in \cite{GMP13} $ \LL^{q_0} $-$ \LL^p $ smoothing effects (with $ p \in (q_0,\infty) $) were established by only assuming a (spectral-gap) Poincar\'e inequality, which in general prevents $ \LL^\infty $ regularization. As for the \emph{fractional} porous medium equation on Euclidean space, we quote \cite{DQRV} and \cite{GMPu}, where fractional Gagliardo-Nirenberg-type (or Nash-type) inequalities were used. In \cite{BV15}, the same equation was considered on domains with homogeneous Dirichlet boundary conditions, and smoothing effects were proved by means of smart Green-function techniques. The $p$-Laplacian equation was then addressed in \cite{G}, through functional-analytic arguments involving \emph{logarithmic} Sobolev inequalities, and in \cite{BCG}, showing optimal convergence to the mean value (on compact manifolds without boundary). In \cite{BG06} the authors analysed doubly nonlinear equations, obtaining sharp smoothing effects still by means of a differential method that exploits logarithmic Sobolev inequalities. 

Actually, smoothing estimates for Neumann problems (and general equations of $p$-Laplacian type) are also considered in \cite{ACLT}, but on domains for which there hold functional inequalities which make the solution behave in a similar way to the Dirichlet case. Doubly nonlinear equations on domains narrowing at infinity are the main subject of \cite{AT00}, even though the geometry of the domains at hand makes again the functional setting closer to a Dirichlet-type one. For similar results on noncompact manifolds (by means of \emph{Faber-Krahn} inequalities), see e.g.~\cite{AT15}. Dirichlet problems on unbounded domains, for the porous medium equation, are then analysed in \cite{AT14} through \emph{harmonic} functions that play a role in weighted Gagliardo-Nirenberg inequalities (with no additional $ \LL^2 $ term), so as to obtain smoothing effects with respect to the corresponding weighted norms. 

An interesting alternative approach, which consists in obtaining preliminary estimates on ``truncated'' solutions and then pass to the limit, was exploited in \cite{Po09} to prove smoothing and decay estimates for $p$-Laplacian-type equations and Dirichlet-type problems; further developments of such an approach were then carried out in \cite{Po11} under milder conditions on $p$ and in \cite{Po15} to deal with more general equations.

We finally quote \cite{ST}, where smoothing effects are obtained for \emph{systems} of porous-medium-type equations, then generalized to the doubly nonlinear case in \cite{TV}.

In the recent paper \cite{CH}, a global theory of smoothing effects for nonlinear semigroups has been set up, which encompasses many of the equations discussed above. The authors proceed by means of time discretization and exploit suitable Gagliardo-Nirenberg inequalities. Their results hold in very abstract frameworks. Nevertheless, we point out that the problems studied here are not included in such theory, both at the level of functional inequalities (due to the additional $\LL^2$ norm in the r.h.s.~of \eqref{NGN}) and at the level of the nonlinearity we consider, which is not necessarily a single power. Moreover, their estimates are mostly significant for short times, namely they do not investigate the validity of bounds of the type of \eqref{stimaPrecisaMezziPorosi-1} or \eqref{stimaPrecisaMezziPorosi-2}.

Let us now turn to the filtration equation. The latter was addressed by several authors: first of all we quote two papers that have been seminal with respect to many aspects, namely \cite{DhK} and \cite{AD}. The weighted case (i.e.~with a density) on Euclidean space was studied in \cite{Ei90} (existence, uniqueness and basic estimates), for a rather general $ \phi $; similar issues were discussed in \cite{EK94} on exterior domains. As one of the first papers concerned with the filtration equation we quote \cite{K76}, which deals with the one-dimensional filtration equation with respect to asymptotics via self-similar solutions, under particular conditions on $ \phi $. Then, in \cite{KR82}, the asymptotics for the same equation with finite-mass densities was investigated (proving convergence to the mean value), while in \cite{GHP} (dimension one and two) the authors analysed support and blow-up properties. In general, when the density decays sufficiently fast at infinity, nontrivial well-posedness issues arise, which were studied in \cite{GMPu14}. 

Even \emph{nonlocal} versions of the filtration equation have recently been investigated. In \cite{BV16} the authors address the Dirichlet problem for a very general equation which covers both the local and the nonlocal case, under suitable assumptions on the \emph{Green} function associated with the operator considered. In particular, up to slightly stronger requirements on $ \phi $ (see Section 2 there), they obtain smoothing effects (see Corollary 6.3 there) analogous to those discussed in Remark \ref{rem: dirichlet}: to the best of our knowledge, this is the first paper dealing with a function $\phi$ that is allowed to have two different power-type behaviours at zero and at infinity (though restricted to Dirichlet problems). As for the problem on Euclidean space, in \cite{VDQR} fine regularity results have been shown in the case where the nonlocal operator is the standard fractional Laplacian, whereas in \cite{DQR} similar properties have been studied for operators with rougher kernels.

For a wide dissertation on filtration equations, we also refer the reader to \cite{DK}, even if the analysis there is mostly concerned with regularity properties and estimates for nonnegative local solutions, solutions on the whole Euclidean space or solutions of the Dirichlet problem on regular domains, especially when $ \phi $ is trapped between two powers at infinity. 

\medskip

\textbf{Organization of the paper.} In Section \ref{sect: res} we present our main results. Theorem \ref{thm01} establishes smoothing effects for general $ \LL^{q_0} $ data, providing an estimate that is the analogue of \eqref{stimaPrecisaMezziPorosi} with $ m=m_2 $ and $ m=m_1 $ for short and long times, respectively. The corresponding proof is given in Section \ref{sect: short}, and proceeds by means of nontrivial modifications of well-established Moser iterations (which in fact go back to \cite{Mo64,Mo71}). Theorem \ref{smoothing-asym-zero-1} then yields a better estimate (the analogue of \eqref{stimaPrecisaMezziPorosi-1}) under the additional assumption that the initial datum, and therefore the solution, has zero mean. The corresponding proof is given in Section \ref{sect: long-zero}. Theorem \ref{thm-asym-nonzero} deals with the case of data, and solutions, having nonzero mean. In view of the $ \LL^\infty $ smoothing effect, we can get the analogue of \eqref{stimaPrecisaMezziPorosi-2}, in the sense that the exponential decay is the one predicted by linearization about the mean value: we prove it in Section \ref{sect: long-non}, and the argument requires a little more regularity on $ \phi^\prime $ (see also Remark \ref{mod-cont}). Sharpness of our estimates, mainly as regards short times, is extensively discussed in Section \ref{sec:sharp}. Finally, Section \ref{sect: well} is devoted to providing basic well-posedness results for problem \eqref{pb}, which however need to be treated cautiously due to the generality of our assumptions on $ \Omega $ and $\phi$. 

\section{Statements of the main results}\label{sect: res} 

We describe here our results concerning smoothing and asymptotic estimates for solutions of \eqref{pb}, under suitable hypotheses on $ \Omega $ that only involve the validity of functional inequalities, as discussed in the Introduction. A precise meaning to the concept of ``solution'' will be given in Section \ref{sect: well}, see in particular Remark \ref{rem: uniq} there.

\begin{thm}[Smoothing]\label{thm01}
Let $ \Omega \subset \mathbb{R}^N $ be a domain that supports the Gagliardo-Nirenberg-Sobolev inequalities \eqref{NGN}. Let $u$ be the solution of \eqref{pb} corresponding to an initial datum $ u_0 \in \LL^1(\Omega) \cap \LL^{q_0}(\Omega) $ with $q_0\in [1,\infty)$, where $\phi \in C^1(\mathbb{R}) $ is any nonlinearity complying with \eqref{cond-phi-1}--\eqref{cond-phi-3}. Then the following smoothing estimate holds:
\begin{equation}\label{thm01-smooth-est}
	\Vert u(t)\Vert_{\infty}\leq 
	\begin{cases}
	K \left( t^{-\frac{N}{2q_0+N(m_2-1)}} \, \Vert u_0\Vert_{q_0}^{\frac{2q_0}{2q_0+N(m_2-1)}} + \Vert u_0\Vert_{q_0} \right) & \forall t \in\left( 0 , \left\| u_0 \right\|_{q_0}^{\frac{2q_0}{N}} \right) , \\
	K \left( t^{-\frac{N}{2q_0+N(m_1-1)}} \, \Vert u_0\Vert_{q_0}^{\frac{2q_0}{2q_0+N(m_1-1)}} + \Vert u_0\Vert_{q_0} \right) & \forall t\geq \Vert u_0\Vert_{q_0}^{\frac{2q_0}{N}}  ,
\end{cases}
\end{equation} 
where $K$ is a positive constant depending only on the spatial dimension $ N $, the constants $ m_1 , m_2 , c_1 , c_2 $ in the lower bounds \eqref{cond-phi-2}--\eqref{cond-phi-3} and the constant $ C_S $ in the Gagliardo-Nirenberg-Sobolev inequalities \eqref{NGN}.
\end{thm}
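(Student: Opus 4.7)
My plan is a Moser-type iteration adapted to the two-power structure of $\phi'$: the two branches of \eqref{thm01-smooth-est} will arise by running the iteration once with the lower bound $\phi'(u)\ge c_2|u|^{m_2-1}$ (for short times, when the solution may be large) and once with $\phi'(u)\ge c_1|u|^{m_1-1}$ (for long times, when the solution has become small). The starting point is the standard energy identity obtained by multiplying \eqref{pb} by $|u|^{q-2}u$ for $q\ge q_0$ and integrating by parts. Thanks to the homogeneous Neumann condition $\partial_n\phi(u)=0$ the boundary term vanishes, yielding
$$
\tfrac{1}{q}\,\tfrac{d}{dt}\|u(t)\|_q^q = -(q-1)\int_\Omega \phi'(u)\,|u|^{q-2}|\nabla u|^2\,dx.
$$
Using the lower bound $\phi'(u)\ge c_i|u|^{m_i-1}$ on the relevant region ($\{|u|\le 1\}$ for $i=1$, $\{|u|>1\}$ for $i=2$) and the identity $|u|^{m_i+q-3}|\nabla u|^2=\bigl(\tfrac{2}{m_i+q-1}\bigr)^2\bigl|\nabla w_i\bigr|^2$ with $w_i:=|u|^{(m_i+q-1)/2}\operatorname{sgn}(u)$, each of the two surviving pieces becomes, up to an explicit constant depending on $q$ and $m_i$, the squared $\LL^2$-norm of $\nabla w_i$. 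Depending on the regime I would retain only one of the two contributions and discard the other.

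The key nonlinear estimate is then produced by applying \eqref{NGN-bis} to $w_i$ with the lower norm $\|w_i\|_s$ chosen (with $s\le 2$) so that $\|w_i\|_s^s$ is a fixed power of a norm of $u$ that can be controlled by $\|u_0\|_{q_0}$ via the non-increase of $\|u(\cdot)\|_q$ in $t$ for every $q\ge 1$. In this way the non-homogeneous additive $\|w_i\|_s$ on the r.h.s.~of \eqref{NGN-bis} can be absorbed and one derives a closed differential inequality of the form
$$
\tfrac{d}{dt}\|u\|_q^q \le -C(q,m_i)\,\|u\|_q^{q(1+\sigma)}\,\|u_0\|_{q_0}^{-\mu}
$$
with explicit, $q$-dependent exponents $\sigma,\mu>0$. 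Integrating this on successive intervals $[t_n,t_{n+1}]\subset(0,t]$ with $t_n\nearrow t$ and $q_n\nearrow\infty$ (for instance $t_n=t(1-2^{-n})$ and $q_n=2^n q_0$) yields a Moser recurrence $\|u(t_{n+1})\|_{q_{n+1}}\le A_n(t_{n+1}-t_n)^{-\beta_n}\|u(t_n)\|_{q_n}^{\gamma_n}\|u_0\|_{q_0}^{\delta_n}$; combining geometrically the sequences $\log A_n,\beta_n,\gamma_n,\delta_n$ should give exactly the scaling exponents $-N/[2q_0+N(m_i-1)]$ on $t$ and $2q_0/[2q_0+N(m_i-1)]$ on $\|u_0\|_{q_0}$, and the $\LL^\infty$ bound follows by passing to the limit $n\to\infty$. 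The crossover $t=\|u_0\|_{q_0}^{2q_0/N}$ is precisely the time at which the short-time estimate first reaches size comparable to $\|u_0\|_{q_0}$: beyond it one may safely restart the iteration with the $m_1$-lower bound, since the solution is by then small.

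The main obstacle is that the classical Moser scheme used e.g.~in \cite{GM13,GM14} rests on a single-power nonlinearity and on a purely homogeneous functional inequality such as \eqref{Sob}; here two power-type lower bounds on $\phi'$ must coexist in the argument, and the additive $\|\cdot\|_s$ term on the r.h.s.~of \eqref{NGN-bis} breaks scaling homogeneity. As a consequence, at every Moser step one must match parameters so that simultaneously (i) the $\|\cdot\|_s$ term can be absorbed into a controlled power of $\|u_0\|_{q_0}$, (ii) the pair $(r,s)$ lies in the admissible range of \eqref{NGN-parameters-r-s-1} or \eqref{NGN-parameters-r-s-2} with $s\le 2$, and (iii) the product $\prod_n A_n$ stays finite as $n\to\infty$. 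These technical bookkeeping issues, rather than a single isolated trick, are what make the proof nontrivial relative to the one-power case.
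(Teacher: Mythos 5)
Your scheme works in the easy case $m_1\le m_2$ (where both power bounds hold globally, since $|u|^{m_2-1}\le|u|^{m_1-1}$ for $|u|\le1$ and vice versa for $|u|>1$), but it breaks down precisely in the case the paper singles out as the hard one, $m_1>m_2$, where \emph{no} global single-power lower bound on $\phi'$ follows from \eqref{cond-phi-2}--\eqref{cond-phi-3}. Your plan to split the dissipation integral over $\{|u|\le1\}$ and $\{|u|>1\}$ and ``retain only one of the two contributions'' does not close the Moser step: the retained piece controls only $\int_{\{|u|>1\}}|\nabla w_2|^2$ (resp.\ $\int_{\{|u|\le1\}}|\nabla w_1|^2$), not $\|\nabla w_i\|_{\LL^2(\Omega)}^2$, so \eqref{NGN-bis} cannot be applied to $w_i$ and the recurrence never starts (one would need a genuine truncation argument, with different functions and different bookkeeping, which you do not set up). The paper's actual device is different: since $|u|\le M:=\|u_0\|_\infty$, one has the \emph{global} bound $\phi'(u)\ge (c_1\wedge c_2)\,M^{-(m_1-m_2)}|u|^{m_1-1}$, so a single-power iteration with exponent $m_1$ goes through at the price of an $M$-dependent constant; the real work (Lemma \ref{lemma: q_0-t-ast}) is then to remove $M$ from the resulting bound by two successive rounds of Young's inequality combined with $t/2$-shift iterations, which is exactly how the $m_2$-exponents appear in the short-time branch. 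This mechanism is absent from your proposal.

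The long-time branch has a second gap. You claim that past the crossover $t=\|u_0\|_{q_0}^{2q_0/N}$ ``the solution is by then small'', so the $m_1$-bound can be used; but at that time the short-time estimate only yields $\|u(t)\|_\infty\lesssim\|u_0\|_{q_0}$, which is not small when $\|u_0\|_{q_0}$ is large, and where $|u|>1$ the hypothesis gives no $m_1$-type lower bound at all. The paper instead introduces $t^\ast=\sup\{t:\|u(t)\|_\infty>1\}$, restarts the pure-$m_1$ argument only for $t>t^\ast$, and then relates $t^\ast$ to $\|u_0\|_{q_0}^{2q_0/N}$ through a case analysis (small versus large $\|u_0\|_{q_0}$), absorbing the intermediate window where one only knows $\|u(t)\|_\infty\le1$ into the additive term $K\|u_0\|_{q_0}$. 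Without an argument of this kind your crossover claim, and hence the second branch of \eqref{thm01-smooth-est}, is unproved. (A minor further point: your energy step with exponent $q$ degenerates at $q=q_0=1$ because of the factor $q-1$; the paper treats $q_0=1$ separately, by interpolation from $q_0>1$ and a further shift argument, as in Corollary \ref{lemma: q_0-t-ast-ext}.)
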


In the case of domains with finite measure 
the above result can be improved, especially as concerns long-time asymptotics. In this regard, it is crucial to treat separately data (and therefore solutions) with zero and nonzero mean.
\begin{thm}[Smoothing and asymptotics, $ \overline{u}_0=0 $]\label{thm-asym-zero}
Let the hypotheses of Theorem \ref{thm01} be fulfilled. Suppose moreover that $ \Omega $ is of finite measure
and that $ \overline{u}_0 = 0 $. Then the following estimates hold:
\begin{equation}\label{smoothing-asym-zero-1}
\Vert u(t)\Vert_{\infty} \leq 
\begin{cases}
K \, t^{-\frac{N}{2q_0+N(m_2-1)}} \left\| u_0 \right\|_{q_0}^{\frac{2q_0}{2q_0+N(m_2-1)}} & \forall t \in \left( 0, \| u_0 \|^{\frac{2q_0}{N}}_{q_0} \right) , \\ 
K \, t^{-\frac{N}{2q_0+N(m_1-1)}} \left\| u_0 \right\|_{q_0}^{\frac{2q_0}{2q_0+N(m_1-1)}} & \forall t \in \left[ \| u_0 \|^{\frac{2q_0}{N}}_{q_0} , 1  \right] , \\
K \, t^{-\frac{N}{2q_0+N(m_1-1)}} \left( t + \| u_0 \|_{q_0}^{1-m_1} \right)^{-\frac{2q_0}{(m_1-1)[2q_0 + N(m_1-1)]}} & \forall t > 1 \, ,
\end{cases}
\end{equation}  
$$ \textrm{for all } u_0 : \, \| u_0 \|_{q_0} \le 1 \, , $$ 
and
\begin{equation}\label{smoothing-asym-zero-2}
	\Vert u(t)\Vert_{\infty}\leq 
	\begin{cases}
K \, t^{-\frac{N}{2q_0+N(m_2-1)}} \left( t + \| u_0 \|_{q_0}^{1-m_2} \right)^{-\frac{2q_0}{(m_2-1)[2q_0 + N(m_2-1)]}} &  \forall t \in \left( 0, 1 \right) , \\
	K \, t^{-\frac{1}{m_1-1}} & \forall t >1 \,  ,
\end{cases}
\end{equation}
$$ \textrm{for all } u_0 : \, \| u_0 \|_{q_0} > 1 \, , $$
where $ K $ is a positive constant depending only on the domain $ \Omega $ and the constants $m_1 , m_2 , c_1 , c_2 $ in the lower bounds \eqref{cond-phi-2}--\eqref{cond-phi-3}. 
\end{thm}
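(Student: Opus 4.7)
The strategy is to upgrade Theorem \ref{thm01} by exploiting two ingredients that are unavailable in its general setting: conservation of the (zero) mean $ \overline{u}(t) = \overline{u}_0 = 0 $ for all $ t > 0 $, and the Poincar\'e inequality \eqref{dis-poin}, which holds on finite-measure domains by Proposition \ref{pro-imp}. The improvement over \eqref{thm01-smooth-est} will materialise through an additional polynomial-rate decay of the $ \LL^{q_0} $ norm of the solution. The first two lines of \eqref{smoothing-asym-zero-1} do not need any new argument: since $ \|u_0\|_{q_0}\le 1 $, a direct comparison of exponents shows that on the indicated $t$-ranges the smoothing term in \eqref{thm01-smooth-est} dominates the additive $ \|u_0\|_{q_0} $, which can therefore be absorbed into the constant $ K $.

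The core of the proof is the decay estimate
\begin{equation*}
\|u(t)\|_{q_0} \le C\left( t + \|u_0\|_{q_0}^{1-m_1} \right)^{-1/(m_1-1)},
\end{equation*}
valid once $ \|u(t)\|_\infty\le 1 $, a regime which by Theorem \ref{thm01} is reached within a time of order $ \|u_0\|_{q_0}^{2q_0/N} $. I would derive it from the energy identity
\begin{equation*}
\frac{d}{dt}\|u(t)\|_{q_0}^{q_0} = -q_0(q_0-1) \int_\Omega |u|^{q_0-2}\phi'(u)|\nabla u|^2\, dx,
\end{equation*}
rewriting the integrand via \eqref{cond-phi-2} as a constant multiple of $ |\nabla w|^2 $ with $ w := |u|^{(q_0+m_1-1)/2}\operatorname{sgn}(u) $, and applying Poincar\'e \eqref{dis-poin} to $w$. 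Since $w$ does not inherit zero mean from $u$, the correction $ |\Omega|\,\overline{w}^{\,2} $ produced by Poincar\'e must be absorbed by a H\"older interpolation that invokes $ \overline{u}=0 $ together with the control of $ \|u\|_1 $ through the finiteness of $ |\Omega| $. This yields a differential inequality of the form $ y' \le -C\, y^{1+(m_1-1)/q_0} $ for $ y := \|u(t)\|_{q_0}^{q_0} $, whose integration gives the decay above; I expect the handling of the spurious mean $ \overline{w} $ to be the main technical obstacle.

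Given the $ \LL^{q_0} $-decay, the third line of \eqref{smoothing-asym-zero-1} follows by applying Theorem \ref{thm01} on $[t/2,t]$ with $ u(t/2) $ in place of the initial datum, and then plugging in the decay bound on $ \|u(t/2)\|_{q_0} $. The estimate \eqref{smoothing-asym-zero-2} is obtained by running the same scheme in the large-$|u|$ regime: while $ \|u(t)\|_{q_0}>1 $ the bound \eqref{cond-phi-3} (with exponent $m_2$) replaces \eqref{cond-phi-2}, yielding the first line of \eqref{smoothing-asym-zero-2}; once the $ \LL^{q_0} $ norm has dropped to $1$, which happens within a time of order one, we restart the argument in the small-data regime just described, and the uniform $ t^{-1/(m_1-1)} $ decay of the second line is what survives after composing the two phases.
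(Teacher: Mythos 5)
Your treatment of the small-data case $\|u_0\|_{q_0}\le 1$ is broadly workable: the observation that on the indicated time ranges the additive term $\|u_0\|_{q_0}$ in \eqref{thm01-smooth-est} is dominated by the smoothing term is correct, and your $\LL^{q_0}$-decay via the energy identity plus Poincar\'e is essentially the paper's inequality \eqref{eq: diff-q-ine2}. Two points, however, are weaker than you suggest. First, Theorem \ref{thm01} does \emph{not} guarantee that the regime $\|u(t)\|_\infty\le 1$ is reached by a time of order $\|u_0\|_{q_0}^{2q_0/N}$: at that time it only yields $\|u(t)\|_\infty\le K\left(1+\|u_0\|_{q_0}\right)$, and since the additive term never decays in that estimate, for $K$ large the bound never drops below $1$ at all. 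This is repairable (restrict to $\{|u|\le 2K\}$ and pay a factor $(2K)^{m_1-m_2}$ in the lower bound on $\phi'$, in the spirit of \eqref{upperboundphiprime} with $M=\|u_0\|_\infty$, which the paper later removes by iteration), but as written your derivation of the decay is incomplete. Second, the ``spurious mean'' $\overline{w}$ of $w=|u|^{(q_0+m_1-1)/2}\operatorname{sgn}(u)$ is exactly the point to which the paper devotes a separate compactness argument (the nonlinear Poincar\'e inequality of Proposition \ref{lem: dis-zeromean} and Corollary \ref{cor: dis-zeromean}, with constant uniform in the exponent); you flag it as the main obstacle but do not resolve it, and the ``H\"older interpolation using $\overline{u}=0$ and $\|u\|_1$'' you invoke is not an argument yet.

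The genuine gap is in \eqref{smoothing-asym-zero-2}. You propose that ``while $\|u(t)\|_{q_0}>1$ the bound \eqref{cond-phi-3} with exponent $m_2$ replaces \eqref{cond-phi-2}''. But \eqref{cond-phi-3} is a \emph{pointwise} bound valid only where $|u(x,t)|>1$; a large $\LL^{q_0}$ norm does not prevent $|u|\le 1$ on a large portion of $\Omega$, and on that set, when $m_1>m_2$, one only has $\phi'(u)\ge c_1|u|^{m_1-1}$, which is strictly weaker than a multiple of $|u|^{m_2-1}$ there. Hence the $m_2$-type differential inequality for $\|u(t)\|_{q_0}^{q_0}$, and with it the first line of \eqref{smoothing-asym-zero-2}, does not follow from your argument when $m_1>m_2$ (for $m_1\le m_2$ there is no problem, since then $\phi'\ge(c_1\wedge c_2)|u|^{m_2-1}$ globally). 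Nor can this be sidestepped by absorbing the additive term of \eqref{thm01-smooth-est}: for $\|u_0\|_{q_0}^{1-m_2}\le t<1$ and $\|u_0\|_{q_0}$ large, the claimed bound is of order $t^{-1/(m_2-1)}$, far below $K\|u_0\|_{q_0}$, so a datum-independent absolute bound is genuinely needed there. The paper obtains it (Lemma \ref{pro: absb}) by a more roundabout route: the $m_1$-based $\LL^{q_0}$ decay \eqref{eq: diff-q-ine2} with constant degraded by $M^{m_1-m_2}$, $M=\|u_0\|_\infty$, combined with the zero-mean smoothing of Lemma \ref{lem: estimates-zero} and a $t/2$-shift iteration (\eqref{eq: diff-q-ine4}--\eqref{smoothing-absb}) that removes $M$ and yields precisely the $t^{-1/(m_2-1)}$ bound up to the time $t^\ast$ at which $\|u\|_\infty$ falls below $1$, followed by the $m_1$-regime afterwards. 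This bootstrapping idea is missing from your proposal, and without it the entire large-data case --- including the $t^{-1/(m_1-1)}$ bound for $t>1$, which you build on top of the first phase --- remains unproved.
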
 

\begin{thm}[Asymptotics, $ \overline{u}_0 \neq 0 $]\label{thm-asym-nonzero}
Let the hypotheses of Theorem \ref{thm01} be fulfilled, with the additional assumption $ \phi \in C^2(\mathbb{R} \setminus \{0\}) $. Suppose moreover that $ \Omega $ is of finite measure
and that $ \overline{u}_0 \neq 0 $. Then the following estimate holds: 
\begin{equation}\label{est-smooth}
\left\| u(t)-\overline{u}_0 \right\|_\infty \le G \, e^{-\frac{\phi^\prime\!(\overline{u}_0)}{C_P^2} \, t} \left\| u_0 - \overline{u}_0 \right\|_1 \quad \forall t \ge 1 \, , 
\end{equation}
where $C_P$ is the best constant in \eqref{dis-poin} and $G$ is a positive constant depending only on $ \| u_0 \|_1 , |\overline{u}_0| , \phi , \Omega $, which is increasing w.r.t.~$ \| u_0 \|_1 $ and locally bounded w.r.t.~$ |\overline{u}_0|>0 $.
\end{thm}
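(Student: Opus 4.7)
The plan is to reduce the statement to a linearised $\LL^2$ energy estimate that becomes available once $u(t)$ has entered a small $\LL^\infty$-neighbourhood of $\overline{u}_0$, and then to upgrade this $\LL^2$ decay to an $\LL^1$--$\LL^\infty$ decay through a Moser-type iteration. Throughout, set $v := u - \overline{u}_0$: integration of the equation yields $\overline{v(t)} = 0$ for every $t \ge 0$, and $v$ solves $v_t = \Delta \Phi(v)$ with $\Phi(s) := \phi(s+\overline{u}_0) - \phi(\overline{u}_0)$, which is $C^2$ in a neighbourhood of $0$ with $\Phi^\prime(0) = \phi^\prime(\overline{u}_0) > 0$ (by the additional hypothesis on $\phi$ and the fact that $\overline{u}_0 \ne 0$). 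Theorem \ref{thm01} furnishes a uniform bound $\|u(t)\|_\infty \le M$ for all $t \ge 1/2$, with $M$ depending on $\|u_0\|_{q_0},\phi,\Omega$; moreover, since $\overline{u}_0$ is itself a stationary solution of \eqref{pb}, the standard $\LL^1$-contraction property for the filtration equation yields $\|v(t)\|_1 \le \|v_0\|_1$ for every $t \ge 0$.

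The pivotal preliminary step is to prove the uniform convergence $u(t) \to \overline{u}_0$ in $\LL^\infty(\Omega)$ as $t \to \infty$. Testing the equation against $v$ produces the dissipation identity
\[
\frac{d}{dt} \, \tfrac{1}{2}\,\|v(t)\|_2^2 \, = \, -\int_\Omega \phi^\prime(u) \, |\nabla u|^2 \, dx \, ,
\]
whose right-hand side is therefore integrable on $[1,\infty)$. Combined with the $\LL^\infty$ bound on $u$, the strict positivity of $\phi^\prime$ on compact subsets of $\mathbb{R} \setminus \{0\}$, the Poincar\'e inequality \eqref{dis-poin} (available on the finite-measure domain $\Omega$ by Proposition \ref{pro-imp}), and the monotonicity of $t \mapsto \|v(t)\|_2$, this forces $v(t) \to 0$ in $\LL^2(\Omega)$. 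A further application of the smoothing of Theorem \ref{thm01} to the translated equation $v_t = \Delta \Phi(v)$ over unit-length time-windows $[t-1,t]$ then upgrades the $\LL^2$ convergence to an $\LL^\infty$ convergence, noting that $\Phi$ obeys lower bounds analogous to \eqref{cond-phi-2}--\eqref{cond-phi-3} near $0$ with leading exponent equal to $1$.

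Given $\eta \in (0,1)$, pick $T_\eta \ge 1$ so large that $\phi^\prime(u(x,t)) \ge (1-\eta)\,\phi^\prime(\overline{u}_0)$ pointwise for every $t \ge T_\eta$; this is legitimate by the $C^2$ regularity of $\phi$ at $\overline{u}_0$. The dissipation identity and Poincar\'e applied to the zero-mean $v$ then yield
\[
\frac{d}{dt}\|v(t)\|_2^2 \, \le \, -\frac{2\,(1-\eta)\,\phi^\prime(\overline{u}_0)}{C_P^2}\,\|v(t)\|_2^2 \qquad \forall t \ge T_\eta \, ,
\]
hence exponential $\LL^2$ decay at rate $(1-\eta)\,\phi^\prime(\overline{u}_0)/C_P^2$. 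To reach the sharp exponent $\phi^\prime(\overline{u}_0)/C_P^2$, I bootstrap: once any preliminary $\LL^\infty$ exponential decay $\|v(t)\|_\infty \le C e^{-\mu t}$ is at hand, $\phi^\prime(u) \ge \phi^\prime(\overline{u}_0)\,(1 - C^\prime e^{-\mu t})$ with a time-integrable error, and a second Gr\"onwall pass erases the factor $(1-\eta)$. To convert the sharp $\LL^2$ decay into the full $\LL^1$--$\LL^\infty$ estimate \eqref{est-smooth}, I run a Moser iteration on $v_t = \Delta \Phi(v)$ over the second half of the interval $[T_\eta, t]$: since $\phi^\prime(u)$ is now pinched between two positive constants close to $\phi^\prime(\overline{u}_0)$, the iteration proceeds essentially as for the linear heat equation and turns exponential $\LL^2$ decay into exponential $\LL^\infty$ decay at the same rate; a preliminary smoothing over $[0,T_\eta]$ bounds $\|v(T_\eta)\|_2$ in terms of $\|v_0\|_1$, supplying the $\LL^1$ prefactor, while the transition window $1 \le t \le T_\eta + 1$ is absorbed into $G$. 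I expect the main obstacle to lie in the $\LL^\infty$-convergence step above: justifying it on the generality of finite-measure, possibly unbounded or irregular, domains considered here requires careful handling of the embedding $H^1(\Omega) \hookrightarrow \LL^2(\Omega)$, which need not be globally compact.
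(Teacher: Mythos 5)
Your overall architecture (first force $u(t)$ into a small uniform neighbourhood of $\overline{u}_0$, then get a non-sharp exponential $\LL^2$ rate from Poincar\'e, then use the $C^2$ regularity of $\phi$ at $\overline{u}_0$ to replace the rate by $\phi^\prime(\overline{u}_0)/C_P^2$ with a time-integrable error, and finally upgrade to $\LL^1$--$\LL^\infty$) is the same as the paper's. But there is a genuine gap in the step you lean on twice: the claim that Theorem \ref{thm01} can be applied to the translated equation $v_t=\Delta\Phi(v)$, $\Phi(s)=\phi(s+\overline{u}_0)-\phi(\overline{u}_0)$, both to upgrade the $\LL^2$ convergence to $\LL^\infty$ and to bound $\|v(T_\eta)\|_2$ by $\|v_0\|_1$. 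The translated nonlinearity does \emph{not} satisfy hypotheses \eqref{cond-phi-2}--\eqref{cond-phi-3}: one has $\Phi^\prime(s)=\phi^\prime(s+\overline{u}_0)$, which may vanish at the interior point $s=-\overline{u}_0\neq 0$ (e.g.\ for $\phi$ behaving like a pure power near the origin, $\phi^\prime(0)=0$), whereas \eqref{cond-phi-2}--\eqref{cond-phi-3} force strict positivity of the derivative away from the origin; moreover those hypotheses require exponents strictly larger than $1$, while you invoke ``leading exponent equal to $1$''. Until uniform convergence is known, the range of $v$ does contain the degenerate value $-\overline{u}_0$ (i.e.\ $u$ may cross $0$), so this degeneracy cannot be wished away, and neither the $\LL^\infty$-convergence step nor the $\LL^1$ prefactor is justified as written (note also that plain interpolation $\|v\|_2\le\|v\|_\infty^{1/2}\|v\|_1^{1/2}$ only yields $\|v_0\|_1^{1/2}$, so a genuine $\LL^1$--$\LL^\infty$ smoothing for the \emph{difference} is needed to get the linear prefactor in \eqref{est-smooth}).

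This missing ingredient is exactly what the paper supplies as Lemma \ref{lem: smooth-rel}: a bespoke Moser iteration on the relative error $w=u/\overline{u}_0-1$, in which the dissipation $\int\phi^\prime(u)|w|^{p-2}|\nabla w|^2$ is rewritten through the weights $\Phi_p(y)=\int_0^y|r|^{p/2-1}|r+1|^{(m_1-1)/2}\,dr$ (which absorb the degeneracy at $u=0$, i.e.\ $w=-1$), the two-sided bounds \eqref{lemma518} are used to compare $\Phi_p(w)$ with $|w|^{p/2}$, and the zero-mean (nonlinear) Gagliardo--Nirenberg--Sobolev inequalities of Corollary \ref{cor: dis-zeromean} close the iteration, yielding $\|u(t)-\overline{u}_0\|_\infty\le K_0\,t^{-N/2}\|u_0-\overline{u}_0\|_1$ for $t\ge1$. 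With that lemma in hand, the rest of your plan (lower bound $|u|\ge|\overline{u}_0|/2$ after some $t_0$, Gr\"onwall with Poincar\'e, bootstrap via $\phi^\prime(u)\ge\phi^\prime(\overline{u}_0)-K_1e^{-Mt}$, final interpolation/shift) coincides with the paper's. Incidentally, you misplace the main difficulty: the compactness/Poincar\'e issue on irregular finite-measure domains is already settled by Proposition \ref{pro-imp}; the true obstacle is the degeneracy of $\phi^\prime$ at $u=0$ described above, and your preliminary ``$v(t)\to0$ in $\LL^2$'' argument also needs care on this point, since the dissipation controls $\|\nabla(u^{\frac{m_1+1}{2}})\|_2$ rather than $\|\nabla v\|_2$.
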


Theorems \ref{thm01}, \ref{thm-asym-zero} and \ref{thm-asym-nonzero} will be proved in Sections \ref{sect: short}, \ref{sect: long-zero} and \ref{sect: long-non}, respectively.

\begin{oss}[Possible generalizations]\label{rem: gen}\rm
The strategies of proof we employ, based on Moser iterations and semigroup arguments, are quite general, and in particular can be straightforwardly adapted to deal with analogous problems on Riemannian manifolds, in the spirit e.g.~of \cite{BG05,BGV,GM16}, or problems with weights, in the spirit e.g.~of \cite{GM13,GMP13,GM14}. The only essential hypothesis we need is the validity of functional inequalities of the type of \eqref{NGN} or \eqref{dis-poin} (in the case of Riemannian manifolds, see the classical reference \cite{Heb}). Note that in more abstract settings where Rellich's Theorem is \emph{a priori} not guaranteed, the Poincar\'e inequality should be required explicitly (Proposition \ref{pro-imp} below may fail). However, in order not to divert the discussion from the main topics, we preferred to work in the standard framework of Euclidean domains. 
\end{oss}

\begin{oss}[The porous medium equation]\rm
It is worth mentioning that Theorem \ref{thm01} holds in the straight porous-medium case as well (i.e.~when $ m_1=m_2=m>1 $) and therefore allows us to improve on the short-time results of \cite{GM13}, in the sense that we succeed in removing the hypothesis of finiteness of $ |\Omega| $ (through more direct techniques actually, see Section \ref{sect: short}).
\end{oss}

\subsection{Sharpness of the estimates}\label{sec:sharp}
In the following we address the question of optimality of our smoothing estimates. We restrict the analysis to short times (that is to \eqref{thm01-smooth-est} as $ t \to 0 $), because for long times, due to $ \LL^\infty $ regularization, the evolution falls within the framework of single-power nonlinearities (i.e.~of the porous-medium type), where optimality was thoroughly discussed in \cite{GM13} (see Remark 4.2 and Proposition 4.5 there). Furthermore, in order not to weigh the discussion down with too many technicalities, we shall deal with the case $ q_0=1 $ only, which is by the way the most significant one in the literature.

The basic idea lying behind optimality is quite simple and is borrowed from \cite[Remark 3.4]{GM13}, \cite[Section 2.2]{Vsmooth}: as we are dealing with a diffusion equation of heat-type structure, it seems reasonable to conjecture that the worst possible initial datum is a \emph{Dirac delta}, which by its nature involves the behaviour of $ \phi(u) $ at infinity. Since the latter at infinity is approximately $ u^{m_2} $ (at worst), we expect that the best smoothing estimate one can get is the one associated with such power, namely \eqref{thm01-smooth-est} for small $t$. The rigorous justification of such an argument is however nontrivial, and this is the purpose of the sequel of this section.

Given any domain $ \Omega \subset \mathbb{R}^N $, $ m_1,m_2 > 1 $ and $ c_1,c_2 > 0 $, we say that an estimate of the type of $ \| u(t) \|_{\infty} \le \mathcal{S}(t,\| u_0 \|_1) $ is \emph{sharp} for short times if there exists $ \phi \in C^1(\mathbb{R}) $ fulfilling \eqref{cond-phi-1}--\eqref{cond-phi-3} such that the corresponding solutions to \eqref{pb} comply with the following lower bound:
\begin{equation}\label{eq:def-sharpness}
 \inf_{M>0} \, \liminf_{t \to 0} \, \sup_{u_0 \in \LL^1(\Omega): \, \| u_0 \|_1 = M } \, \frac{\| u(t) \|_{\infty}}{\mathcal{S}(t,\| u_0 \|_1)} > 0 \, .
\end{equation} 
In other words we require that, up to multiplicative constants, the estimate captures the precise behaviour as $ t\to 0 $ of the worst possible solution having a given mass (think e.g.~of positive solutions). Then the ratio between the behaviour of such solution and the estimate must be independent of the mass. 

We shall prove that \eqref{eq:def-sharpness} does hold when $ \mathcal{S}(t,\| u_0 \|_1) $ is the right-hand side of \eqref{thm01-smooth-est} at $ q_0=1 $. First of all, let us fix any $ \phi \in C^1(\mathbb{R}) $ with $ \phi^\prime>0 $ satisfying 
\begin{equation}\label{eq:choice-phi} 
\phi(u)=
\begin{cases}
u^{m_1} & \forall u: \, |u| \in \left[0 , \frac 1 2 \right] , \\
u^{m_2} & \forall u: \, |u| \ge 2 \, ,
\end{cases}
\end{equation}
where $ u^m:=|u|^{m-1}u $. It is plain that such a function fulfils \eqref{cond-phi-1}--\eqref{cond-phi-3} for suitable $ c_1,c_2>0 $. Actually $ c_1$ and $c_2 $ are supposed to be given data: in any case, it is enough to multiply the above $ \phi $ by a large enough constant so that \eqref{cond-phi-1}--\eqref{cond-phi-3} are satisfied with the given values of $ c_1 $ and $c_2$. For simplicity, we shall keep $ \phi $ as in \eqref{eq:choice-phi}, and we leave it to the reader to check that a multiplication by a constant adds no significant difficulty in the discussion below. Now we pick $ {\phi_\ast} \in C^1(\mathbb{R}) $, with $ \phi_\ast^\prime>0 $, such that
\begin{equation}\label{eq:ast-phi}
\phi_\ast(u)=
\begin{cases}
u^{m_2} & \quad \forall u: \, |u| \in \left[ 0, \frac{1}{16} \right] , \\
\phi(u) & \quad \forall u: \, |u| \ge \frac18 \, . \\
\end{cases} 
\end{equation}
Let $ \mathcal{U}_\ast $ be the ``Barenblatt'' solution to the Cauchy problem
\begin{equation}\label{pb-modified}
	\begin{cases}
		u_t=\Delta\phi_\ast(u) & \textrm{in } \mathbb{R}^N \times \mathbb{R}^+ \, , \\
		u(0)=\delta_{x_0} & \textrm{in } \mathbb{R}^N \, ,
	\end{cases}
\end{equation}
where $ \delta_{x_0} $ is the Dirac delta centred at some $ x_0 \in \Omega $. Because $ \phi_\ast(u) $ behaves like $ u^{m_2} $ both at zero and at infinity, one can show (e.g.~by standard barrier arguments) that $ \mathcal{U}_\ast $ enjoys approximately the same scaling properties as the very Barenblatt solution associated with $ \phi(u) \equiv u^{m_2} $. In particular, 
\begin{equation}\label{eq:scal-approx}
\| \mathcal{U}_\ast (t) \|_1 = 1 \, , \quad \infty > \| \mathcal{U}_\ast(t) \|_\infty \ge K_0 \, t^{-\frac{N}{2+N(m_2-1)}} \quad \textrm{and} \quad  \operatorname{supp} \, \mathcal{U}_\ast(\cdot,t) \subset B_{R_0 \, t^{\frac{1}{2+N(m_2-1)}}}^{(x_0)} \quad \forall t > 0
\end{equation} 
for suitable positive constants $ K_0 $ and $R_0$, where $ B_r^{(x_0)} $ is the ball of radius $ r>0 $ centred at $ x_0 $. Given $ \ell>0 $, let us then consider the Barenblatt solution $ \mathcal{U}_{\ell} $ to
\begin{equation}\label{pb-barenblatt-1}
	\begin{cases}
		u_t=\Delta u^{m_1} & \textrm{in } \mathbb{R}^N \times \mathbb{R}^+ \, , \\
		u(0)= \ell \, \delta_{x_0} & \textrm{in } \mathbb{R}^N \, ,
	\end{cases}
\end{equation}
whose profile is self-similar and radially decreasing w.r.t.~$ |x-x_0| $ (see e.g.~\cite[Remark 3.4]{GM13}). We can therefore select $ t_0>0 $ and $ \ell>0 $ in such a way that 
\begin{equation}\label{pb-barenblatt-2}
\| \mathcal{U}_{\ell}(t_0) \|_\infty = \frac{1}{4} \quad \textrm{and} \quad \operatorname{supp} \,  \mathcal{U}_{\ell}(\cdot,t_0) \subset \Omega \, .
\end{equation}
We finally combine $ \mathcal{U}_\ast  $ and $ \mathcal{U}_\ell $ by setting
\begin{equation}\label{eq-uhat}
\hat{u}(x,t) := \max\left\{ \mathcal{U}_\ast(x,t) \, , \, \mathcal{U}_{\ell}(x,t+t_0) \right\} \quad \forall (x,t) \in \mathbb{R}^N \times \mathbb{R}^+ \, .
\end{equation}
Thanks to \eqref{pb-barenblatt-1}--\eqref{pb-barenblatt-2}, the continuity of $ \mathcal{U}_{\ell} $ for positive times, \eqref{eq:ast-phi}--\eqref{eq:scal-approx} and \eqref{eq:choice-phi}, it is apparent that there exists $ t_s > 0 $ such that, for any given $ \tau \in (0,t_s) $, the function $ (x,t) \mapsto \hat{u}(x,t+\tau) $ is a compactly supported \emph{subsolution} to problem \eqref{pb} with initial datum $ u_0(x)=\hat{u}(x,\tau) $ for all $ t \in (0,t_s-\tau) $. Moreover, since the total mass of $ \mathcal{U}_{\ell}(\cdot,t) $ is also preserved in time and $ \mathcal{U}_{\ast}(\cdot,\tau) $ tends to a Dirac delta as $ \tau \to 0 $,   
\begin{equation}\label{pb-barenblatt-3}
\lim_{\tau \to 0} \| \hat{u}(\tau) \|_1 = 1+\ell  \quad \textrm{and} \quad \| \hat{u}(\tau) \|_1 \le 1+\ell \quad \forall \tau>0 \, .
\end{equation}
As $ \phi $ is independent of $ x $ and $ \hat{u}(\cdot,t) $ is compactly supported in $ \Omega $ for all $ t \in (0,t_s) $, by parabolic scaling we know there exists $ \varepsilon>0 $ such that $ \hat{u}_{\lambda,\tau}(x,t):=\hat{u}(x_0+\lambda(x-x_0) ,\lambda^2 (t + \tau)) $ is still a compactly supported subsolution to problem \eqref{pb} with initial datum $ u_0(x)=\hat{u}(x_0+\lambda(x-x_0),\lambda^2\tau) $ for all $ t \in (0,t_s/\lambda^2-\tau ) $ and positive $ \lambda > 1-\varepsilon $ subject to $ \lambda^2 < t_s/\tau $. Hence, in view of \eqref{pb-barenblatt-3} and the continuity of both $ \mathcal{U}_{\ast} $ and $ \mathcal{U}_{\ell} $ for positive times, it is possible to choose two positive sequences $ \lambda_n \to 1 $ and $ \tau_n \to 0 $ so that 
\begin{equation}\label{pb-barenblatt-4}
\| \hat{u}_{\lambda_n,\tau_n}(0) \|_1 = 1+\ell  \, , \quad  \lambda_n^2 < \frac{t_s}{\tau_n}  \quad \forall n \in \mathbb{N} \, .
\end{equation} 
So, by virtue of the middle inequality in \eqref{eq:scal-approx}, \eqref{pb-barenblatt-4} and recalling estimate \eqref{thm01-smooth-est}, we have:  
\begin{equation}\label{pb-barenblatt-5}
\frac{\| \hat{u}_{\lambda_n,\tau_n}(t) \|_\infty}{\mathcal{S}(t,\| \hat{u}_{\lambda_n,\tau_n}(0) \|_1)} 
\ge \frac{K_0 \left[\lambda_n^2 \left( 1+\frac{\tau_n}{t} \right) \right]^{-\frac{N}{2+N(m_2-1)}} }{K \left[ (1+\ell)^{\frac{2}{2+N(m_2-1)}} + (1+\ell) \, t^{\frac{N}{2+N(m_2-1)} \,} \right]} \quad \forall t \in \left( 0 , \left( {t_s}/{\lambda_n^2} - \tau_n \right) \wedge (1+\ell)^{\frac 2 N}  \right) .
\end{equation} 
Upon letting first $ n \to \infty $ and then $ t \to 0 $ in \eqref{pb-barenblatt-5}, we can therefore infer the validity of (note that $ \hat{u}_{\lambda_n,\tau_n} $ is a subsolution) 
\begin{equation}\label{pb-barenblatt-6}
\liminf_{t \to 0} \, \sup_{u_0 \in \LL^1(\Omega): \, \| u_0 \|_1 = 1+\ell } \, \frac{\| u(t) \|_{\infty}}{\mathcal{S}(t,\| u_0 \|_1)} \ge \frac{K_0}{K \, (1+\ell)^{\frac{2}{2+N(m_2-1)}} } \,  .
\end{equation} 
In order to complete the argument and prove \eqref{eq:def-sharpness}, we need to show that \eqref{pb-barenblatt-6} is in fact independent of the particular value of the mass $ 1+\ell $. To this end, let $ \alpha \in (0,1) $ and consider the functions 
$$ 
\hat{u}_{\lambda_n,\tau_n,\alpha}(x,t) :=  \hat{u}_{\lambda_n,\tau_n}\!\left(x_0+\alpha^{-\frac{1}{N}}(x-x_0) , \alpha^{-\frac{2}{N}}t\right) \quad \forall (x,t) \in \mathbb{R}^N \times \mathbb{R}^+ \, ,
$$ 
which by parabolic scaling are subsolutions to \eqref{pb} for all $ t \in (0,\alpha^{2/N} (t_s/\lambda_n^2-\tau_n) ) $ and they satisfy $ \| \hat{u}_{\lambda_n,\tau_n,\alpha}(0) \|_1 = \alpha(1+\ell) $ (recall \eqref{pb-barenblatt-4}). As a consequence, thanks again to the middle inequality in \eqref{eq:scal-approx}, by reasoning as above we deduce that 
\begin{equation}\label{pb-barenblatt-7} 
\begin{gathered}
\sup_{u_0 \in \LL^1(\Omega): \, \| u_0 \|_1 = \alpha(1+\ell) } \, \frac{\| u(t) \|_{\infty}}{\mathcal{S}(t,\| u_0 \|_1)}
\ge \frac{K_0}{K \left[ (1+\ell)^{\frac{2}{2+N(m_2-1)}} + \alpha^{\frac{N(m_2-1)}{2+N(m_2-1)}} \, (1+\ell)\, t^{\frac{N}{2+N(m_2-1)}} \right]} \\
 \forall t \in \left( 0 , \alpha^{\frac 2 N} \! \left[ t_s \wedge (1+\ell)^{\frac 2 N} \right] \right) .
\end{gathered}
\end{equation} 
As a final step, we let $ \beta>1 $ and set
$$ \mathcal{U}_{\ast,\beta}(x,t) := \mathcal{U}_\ast\!\left(x_0+\beta^{-\frac{1}{N}} (x-x_0) , \beta^{-\frac{2}{N}} t\right) \quad \forall (x,t) \in \mathbb{R}^N \times \mathbb{R}^+ \, ; $$ 
such functions, still as a consequence of parabolic scaling, are nothing but the solutions to \eqref{pb-modified} with $ \delta_{x_0} $ replaced by  $ \beta \delta_{x_0} $. Hence, from \eqref{eq:scal-approx}, for all $ t>0 $ there follows $ \| \mathcal{U}_{\ast,\beta} (t) \|_1 = \beta $,
\begin{equation}\label{eq:scal-approx-1}
\infty > \| \mathcal{U}_{\ast,\beta}(t) \|_\infty \ge K_0 \, \beta^{\frac{2}{2+N(m_2-1)}} \, t^{-\frac{N}{2+N(m_2-1)}} \quad \textrm{and} \quad  \operatorname{supp} \, \mathcal{U}_{\ast,\beta}(\cdot,t) \subset B_{R_0 \, \beta^{\frac {m_2-1}{2+N(m_2-1)} } t^{\frac{1}{2+N(m_2-1)}}}^{(x_0)} \, .
\end{equation}
If we repeat the arguments that led to \eqref{pb-barenblatt-5} and \eqref{pb-barenblatt-6}, up to the replacement of $ \mathcal{U}_\ast $ by $ \mathcal{U}_{\ast,\beta} $ in the definition of $ \hat{u} $, we end up with (note that here $ t_s = t_s(\beta) $)
\begin{equation}\label{pb-barenblatt-8} 
\begin{gathered}
\sup_{u_0 \in \LL^1(\Omega): \, \| u_0 \|_1 = \beta+\ell } \, \frac{\| u(t) \|_{\infty}}{\mathcal{S}(t,\| u_0 \|_1)}
\ge \frac{ K_0 }{K \left[ (1+\ell/\beta)^{\frac{2}{2+N(m_2-1)}} + \beta^{-\frac{2}{2+N(m_2-1)}} \, (\beta + \ell) \, t^{\frac{N}{2+N(m_2-1)}} \right]} \\
 \forall t \in \left( 0 ,  t_s(\beta) \wedge (\beta+\ell)^{\frac 2 N}  \right) .
\end{gathered}
\end{equation} 
In view of \eqref{pb-barenblatt-7} and \eqref{pb-barenblatt-8}, the infimum in \eqref{eq:def-sharpness} is indeed bounded from below by the same constant as in \eqref{pb-barenblatt-6}, and sharpness is finally established. 

We stress that the above construction applies to the Dirichlet problem as well (see Remark \ref{rem: dirichlet} below), since the subsolutions we exploit are compactly supported for short times. Moreover, we made no assumption at all on the domain, which means that for \emph{any} domain one cannot expect a better estimate than \eqref{pb-barenblatt-8}. This is coherent with the fact that smoothing estimates are actually \emph{equivalent} to Gagliardo-Nirenberg-Sobolev inequalities (we refer in particular to \cite[Theorem 5.3]{GM13} and \cite[Theorems 3.1, 3.3]{GM14}).      

\section{Well-posedness and basic properties}\label{sect: well}

In this section we deal with existence and uniqueness issues for solutions of problem \eqref{pb} (and related properties), so as to clarify what we mean by ``solution'' in the results stated above. Even though we are mainly interested in functions $ \phi \in C^1(\mathbb{R}) $ complying with \eqref{cond-phi-1}--\eqref{cond-phi-3}, in the sequel we shall allow for more general nonlinearities, that is we shall only make the following assumptions (see e.g.~\cite[Section 5.2]{Vbook}):
\begin{equation}\label{ass-gen-phi}
\phi : \, \mathbb{R} \mapsto \mathbb{R} \ \textrm{is continuous and strictly increasing, with } \lim_{u\to\pm\infty} \phi(u)=\pm\infty \, , \ \phi(0)=0 \, .
\end{equation}
Similar remarks hold for $ \Omega $: we only suppose that it is a general domain of $ \mathbb{R}^N $, regardless of boundedness, regularity or the validity of global functional inequalities like \eqref{NGN} and \eqref{dis-poin}. However, in the cases where the assumptions of Theorem \ref{thm01} are fulfilled, we can somewhat improve on the well-posedness theory outlined here (see Remark \ref{rem: uniq} below in this regard). 

%
Let us start off by providing an appropriate definition of weak solution.
\begin{den}[Weak solutions]\label{def-gmp}
Given $ u_0 \in \LL^1(\Omega) $, a measurable function $ u $ is a weak solution of the Neumann problem \eqref{pb} if, for all $ T>0 $,
\begin{equation*}\label{sol-p1}
u \in \LL^1(\Omega \times (0,T)) \, , \quad \phi(u) \in \LL^1_{\rm loc}(\Omega \times (0,T)) \, , \quad \nabla \phi(u) \in \LL^1\big((0,T);[\LL^2(\Omega)]^N\big)   
\end{equation*}
and 
\begin{equation*}\label{sol-p2}
\int_0^T \int_\Omega u(x,t) \, \eta_t(x,t) \, dx dt = -\int_\Omega u_0(x) \, \eta(x,0) \, dx + \int_0^T \int_\Omega \nabla \phi(u)(x,t) \cdot \nabla\eta(x,t) \, dx dt
\end{equation*}
for all $ \eta \in W^{1,\infty}((0,T);\LL^\infty(\Omega)) $ with $ \nabla{\eta} \in \LL^\infty((0,T);[\LL^2(\Omega)]^N) $, such that $ \eta(\cdot,T)\equiv 0 $.
\end{den} 

Actually, for initial data that are merely in $ \LL^1(\Omega) $, in general we cannot guarantee existence of a weak solution in the sense of Definition \ref{def-gmp}. Nevertheless, in such case there is still a natural way to define what one means by ``solution'', see Proposition \ref{thm-limit} below.

\begin{oss}[Other definitions of weak solution]\label{def-diff}\rm
As the reader may notice, our definition of weak solution slightly differs from \cite[Definition 11.2]{Vbook}. The point is that, as just recalled above, we aim at considering general domains $ \Omega $: in particular, density of functions that are regular up to the boundary need not hold e.g.~in $ H^1(\Omega) $. For this reason we do not assume $ \eta \in C^1(\overline{\Omega}\times [0,T]) $ but we ask that $ \eta $ belongs to a larger ``dual'' space, namely $ \nabla \eta \in [\LL^2(\Omega)]^N  $. As a reference for similar questions, see also \cite[Sections 2, 3]{GMP13}. 
\end{oss}

As a direct consequence of Definition \ref{def-gmp}, all weak solutions enjoy an important property.
\begin{pro}[Mass conservation]\label{prop-mass-c}
Let $u$ be any weak solution of \eqref{pb}. Then 
\begin{equation*}\label{cons-mass-th}
\int_\Omega u(x,t) \, dx = \int_\Omega u_0(x) \, dx \quad \textrm{for a.e. } t >0 \, .
\end{equation*}
\end{pro}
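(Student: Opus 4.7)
\textbf{Proof proposal for Proposition \ref{prop-mass-c}.} The plan is to insert into the weak formulation a test function that depends only on the time variable, so as to make the spatial gradient term vanish and reduce the identity to a linear relation between time integrals of $\int_\Omega u(x,t)\,dx$ and $\int_\Omega u_0\,dx$.

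First, I fix an arbitrary $T>0$. By Fubini the map $t \mapsto M(t):=\int_\Omega u(x,t)\,dx$ is well defined for a.e.\ $t\in(0,T)$ and belongs to $\LL^1(0,T)$. Let $t_1 \in (0,T)$ be a Lebesgue point of $M$. For $\varepsilon \in (0, t_1)$ I choose the piecewise affine cut-off
$$
\psi_\varepsilon(t):=
\begin{cases}
1 & \text{if } t \in [0, t_1-\varepsilon], \\
\frac{t_1-t}{\varepsilon} & \text{if } t \in [t_1-\varepsilon, t_1], \\
0 & \text{if } t \in [t_1, T],
\end{cases}
$$
and set $\eta(x,t):=\psi_\varepsilon(t)$. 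Then $\eta$ is bounded by $1$, $\partial_t \eta = -\tfrac{1}{\varepsilon}\mathbf{1}_{(t_1-\varepsilon,t_1)}$ is in $\LL^\infty((0,T);\LL^\infty(\Omega))$, $\nabla \eta \equiv 0$ (so in particular lies in $\LL^\infty((0,T);[\LL^2(\Omega)]^N)$ regardless of $|\Omega|$), and $\eta(\cdot,T)\equiv 0$. Hence $\eta$ is an admissible test function in Definition \ref{def-gmp}.

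Plugging $\eta$ into the weak formulation, the right-hand gradient term vanishes because $\nabla \eta \equiv 0$, and we obtain
$$
-\frac{1}{\varepsilon} \int_{t_1-\varepsilon}^{t_1} M(t)\,dt = -\int_\Omega u_0(x)\,dx,
$$
that is, $\tfrac{1}{\varepsilon}\int_{t_1-\varepsilon}^{t_1} M(t)\,dt = \int_\Omega u_0\,dx$. Since $t_1$ is a Lebesgue point of $M\in\LL^1(0,T)$, letting $\varepsilon\to 0$ yields $M(t_1)=\int_\Omega u_0\,dx$. As the complement of the set of Lebesgue points has zero measure, and $T>0$ was arbitrary, this proves the assertion for a.e.\ $t>0$.

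No serious obstacle is expected; the only point deserving a brief justification is the admissibility of a spatially constant test function on a possibly infinite-measure $\Omega$, and this is precisely what the broader class of test functions in Definition \ref{def-gmp} (as opposed to functions regular up to the boundary, see Remark \ref{def-diff}) is designed to accommodate, since $\nabla\eta\equiv 0$ trivially lies in $[\LL^2(\Omega)]^N$.
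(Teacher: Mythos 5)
Your proposal is correct and follows essentially the same route as the paper, which proves mass conservation by inserting the constant-in-space test function $\eta=\chi_{[0,t]}$ ``up to approximations''; your piecewise affine cut-off $\psi_\varepsilon$ together with the Lebesgue-point argument is precisely the approximation the paper leaves implicit. Your closing remark about admissibility of spatially constant test functions on possibly infinite-measure $\Omega$ matches the paper's own emphasis that Definition \ref{def-gmp} is designed to allow this.
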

\begin{proof}
One can proceed exactly as in the proof of \cite[Proposition 9]{GMP13}, that is by using the (constant-in-space) test function $ \eta=\chi_{[0,t]} $ up to approximations. We stress that the finiteness of $ |\Omega| $ is not necessary here, since Definition \ref{def-gmp} allows one to pick (sufficiently regular) test functions that are independent of the space variable.
\end{proof}

Before stating a key existence result (i.e.~the analogue of \cite[Theorem 11.2]{Vbook}), we need to introduce the primitive function of $ \phi $, namely $ \psi(u):=\int_0^u \phi(v) \, dv $. Note that $ \psi(0)=0 $ and, by virtue of \eqref{ass-gen-phi}, $ \psi $ is $ C^1(\mathbb{R}) $, positive in $ \mathbb{R} \setminus \{ 0 \} $, strictly increasing in $ (0,+\infty) $ and strictly decreasing in $(-\infty,0)$, with $ \lim_{u\to\pm\infty} \psi(u)=+\infty $.

\begin{pro}[Existence and estimates]\label{thm-exi}
Let $ u_0 \in \LL^1(\Omega) $ with $ \psi(u_0) \in \LL^1(\Omega) $. Then there exists a weak solution $u$ of \eqref{pb}, which enjoys the following properties.
\begin{itemize}
\item \emph{Energy inequality:} $u$ satisfies
\begin{equation}\label{eest}
\int_0^T \int_{\Omega} \left| \nabla{\phi(u)}(x,t) \right|^2 dx dt + \int_\Omega \psi(u(x,T)) \, dx \le \int_\Omega \psi(u_0(x)) \, dx \quad \textrm{for a.e. } T > 0 
\end{equation}
and is referred to as a weak \emph{energy} solution.
\item \emph{Approximation:} $u$ is obtained as a limit of classical solutions of suitable non-degenerate parabolic problems.
\item \emph{$\LL^1$-contractivity and comparison:} if $v$ is another weak energy solution corresponding to some $ v_0 \in \LL^1(\Omega) $ with $ \psi(v_0) \in \LL^1(\Omega) $, then
\begin{equation}\label{L1-est}
\left\| u(t)-v(t) \right\|_1 \le \left\| u_0-v_0 \right\|_1 \quad \textrm{for a.e. } t > 0 \, .
\end{equation}
Moreover, if $ v_0 \le u_0 $ a.e.~in $ \Omega $ then $ u \le v $ a.e.~in $ \Omega \times \mathbb{R}^+ $.
\item \emph{Non-expansivity of the norms:} if in addition $ u_0 \in \LL^\infty(\Omega) $, there holds
\begin{equation}\label{eq: non-exp}
\left\| u(t) \right\|_p \le \left\| u_0 \right\|_p  \quad \forall p \in [1,\infty] \, , \ \textrm{for a.e. } t > 0 \, .
\end{equation}
\end{itemize}
\end{pro}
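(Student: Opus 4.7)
The plan is to construct the solution through a double approximation: first regularize the nonlinearity $\phi$ to remove its degeneracy, and second exhaust the possibly irregular, possibly unbounded domain $\Omega$ by a sequence of nice bounded subdomains. Concretely, set $\phi_\varepsilon(u):=\phi(u)+\varepsilon u$, which is $C^1$, strictly increasing and bounded below by $\varepsilon$. Choose an increasing sequence $\Omega_n\Subset\Omega$ of bounded smooth subdomains with $\bigcup_n\Omega_n=\Omega$, and a smooth truncation/mollification $u_{0,n,\varepsilon}$ of $u_0$ such that $u_{0,n,\varepsilon}\to u_0$ in $\LL^1(\Omega)$ and $\psi(u_{0,n,\varepsilon})\to\psi(u_0)$ in $\LL^1(\Omega)$ as $n\to\infty$ and $\varepsilon\to0$. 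On each $\Omega_n$ solve the non-degenerate homogeneous-Neumann problem
\begin{equation*}
(u_{n,\varepsilon})_t=\Delta\phi_\varepsilon(u_{n,\varepsilon})\,, \quad \tfrac{\partial \phi_\varepsilon(u_{n,\varepsilon})}{\partial n}=0 \text{ on }\partial\Omega_n\,,\quad u_{n,\varepsilon}(0)=u_{0,n,\varepsilon}|_{\Omega_n}\,,
\end{equation*}
which admits a classical solution by the standard theory of quasilinear parabolic equations (Ladyzhenskaya--Solonnikov--Ural'tseva), since the equation is uniformly parabolic once $\phi_\varepsilon'\ge\varepsilon>0$ and the data are smooth.

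\textbf{Uniform estimates at the approximate level.} On each $\Omega_n$ the classical Kato inequality (or a direct Stampacchia truncation) applied to two solutions $u_{n,\varepsilon},v_{n,\varepsilon}$ gives
\begin{equation*}
\tfrac{d}{dt}\int_{\Omega_n}(u_{n,\varepsilon}-v_{n,\varepsilon})_+\,dx\le 0\,,
\end{equation*}
yielding $\LL^1$-contractivity, comparison and, by choosing $v\equiv 0$, the bound $\|u_{n,\varepsilon}(t)\|_1\le\|u_{0,n,\varepsilon}\|_1$. Multiplying the equation by $\phi_\varepsilon(u_{n,\varepsilon})$, integrating by parts (the boundary term vanishes by the Neumann condition) and recalling $\psi_\varepsilon(u):=\int_0^u\phi_\varepsilon$ satisfies $\psi_\varepsilon\ge\psi$, one obtains the energy identity
\begin{equation*}
\int_0^T\!\!\int_{\Omega_n}|\nabla\phi_\varepsilon(u_{n,\varepsilon})|^2\,dxdt+\int_{\Omega_n}\psi_\varepsilon(u_{n,\varepsilon}(T))\,dx=\int_{\Omega_n}\psi_\varepsilon(u_{0,n,\varepsilon})\,dx\,.
\end{equation*}
Finally, when $u_0\in\LL^\infty(\Omega)$, comparing $u_{n,\varepsilon}$ with the constant supersolutions $\pm\|u_0\|_\infty$ gives the $\LL^\infty$ bound; coupled with $\LL^1$-contraction, Riesz-Thorin style interpolation (or testing with $|u_{n,\varepsilon}|^{p-2}u_{n,\varepsilon}$ after a further smooth truncation) produces the non-expansivity \eqref{eq: non-exp}.

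\textbf{Passage to the limit.} The uniform bounds above give weak compactness of $\{\nabla\phi_\varepsilon(u_{n,\varepsilon})\}$ in $\LL^2((0,T)\times\Omega)$, of $\{u_{n,\varepsilon}\}$ in $\LL^\infty((0,T);\LL^1(\Omega))$, and control of $\psi(u_{n,\varepsilon})$ in $\LL^\infty((0,T);\LL^1)$, hence equi-integrability of $\{u_{n,\varepsilon}\}$ via the superlinearity of $\psi$ at infinity. To upgrade to strong convergence I would use an Aubin--Lions type argument: from the equation, $(u_{n,\varepsilon})_t$ is controlled in a negative Sobolev space, and this, combined with the spatial $H^1$-control of $\phi_\varepsilon(u_{n,\varepsilon})$, yields compactness of $\phi_\varepsilon(u_{n,\varepsilon})$ in $\LL^2_{\rm loc}$. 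By monotonicity of $\phi$, strong convergence of $\phi_\varepsilon(u_{n,\varepsilon})$ implies a.e.~convergence of $u_{n,\varepsilon}$, letting us identify the limit in the nonlinear term and pass to the weak formulation with test functions as in Definition \ref{def-gmp} (using extension by zero and density in the class indicated there). The energy inequality \eqref{eest} is inherited as an inequality by lower semicontinuity, and \eqref{L1-est} and the non-expansivity pass to the limit since they are preserved under a.e.~and weak-$\ast$ convergence.

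\textbf{Main obstacle.} The delicate point is the passage to the limit on a general, possibly irregular and unbounded, $\Omega$: one must avoid using any boundary trace/density theory. This is precisely why Definition \ref{def-gmp} was formulated with test functions $\eta$ having $\nabla\eta\in\LL^\infty((0,T);[\LL^2(\Omega)]^N)$ rather than $\eta\in C^1(\overline\Omega\times[0,T])$ (cf.\ Remark \ref{def-diff}), and why the $\Omega_n$-exhaustion must be performed so that test functions can be cut off against the geometry of $\Omega_n$ without losing information in the limit. Handling this, together with the identification of the nonlinear term on arbitrary domains via the Minty monotonicity trick when strong compactness is only local, is the technical heart of the proof; the remaining items (energy inequality, contractivity, comparison, non-expansivity) then follow from their uniform analogues at the $(n,\varepsilon)$-level.
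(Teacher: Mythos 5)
Your overall scheme (regularize the nonlinearity, exhaust $\Omega$ by smooth bounded subdomains, approximate the data, pass to the limit using the energy estimate, $\LL^1$-contraction and comparison) is the same as the paper's, and the approximate-level estimates you list are the standard ones. The genuine gap is in the final passage to the limit, for data that are merely in $\LL^1(\Omega)$ with $\psi(u_0)\in \LL^1(\Omega)$: you invoke ``the spatial $H^1$-control of $\phi_\varepsilon(u_{n,\varepsilon})$'' to run an Aubin--Lions argument, but the energy estimate \eqref{eest} only controls $\nabla\phi_\varepsilon(u_{n,\varepsilon})$ in $\LL^2$; it gives no bound on $\phi_\varepsilon(u_{n,\varepsilon})$ itself, not even in $\LL^2_{\rm loc}$. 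At this stage the uniform $\LL^\infty$ bound is unavailable (the $\LL^\infty$ norm of the truncated data blows up as the truncation is removed), $\Omega$ is a general, possibly unbounded and irregular domain, so no global Poincar\'e inequality or mean-value control is at hand, and nothing among your uniform estimates bounds $\|\phi(u_n)\|_{\LL^2_{\rm loc}}$. Without such a bound neither your compactness step nor the identification of the weak $\LL^2$ limit of $\nabla\phi(u_n)$ with $\nabla\phi(u)$ goes through. This is exactly the point the paper isolates as the only non-routine part of the proof; note also that the paper performs the three approximations sequentially (nonlinearity on bounded regular domains with bounded data, then the domain, then the data) precisely so that the stronger estimates are available where they are needed, whereas your simultaneous scheme faces the missing bound head-on.

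The paper fills this gap as follows. For any bounded regular $\Omega_0\Subset\Omega$, the local Poincar\'e inequality bounds $\|\phi(u_n)(t)-\overline{\phi(u_n)(t)}\|_{\LL^2(\Omega_0)}$ by $C_P(\Omega_0)\,\|\nabla\phi(u_n)(t)\|_{\LL^2(\Omega_0)}$. The energy inequality gives a uniform bound on $\int_\Omega\psi(u_n(x,t))\,dx$, hence by Chebyshev a bound on $|\{|u_n(\cdot,t)|>c\}|$; since $\psi(u)\to\infty$ as $|u|\to\infty$, for $c$ large (independent of $n$) one finds a subset of $\Omega_0$ of measure at least $|\Omega_0|/2$ on which $|u_n(\cdot,t)|\le c$, which controls the mean value $\overline{\phi(u_n)(t)}$ and therefore yields a uniform bound on $\phi(u_n)$ in $\LL^2(\Omega_0\times[0,T])$. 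Combined with the a.e.\ convergence of $u_n$, which already follows from the $\LL^1$-contraction \eqref{L1-est} (so no Aubin--Lions or Minty argument is needed here), this gives the local weak convergence $\phi(u_n)\rightharpoonup\phi(u)$ and identifies the limit of the gradients. A further minor point: under \eqref{ass-gen-phi} the function $\phi$ is only continuous and strictly increasing, so $\phi_\varepsilon(u)=\phi(u)+\varepsilon u$ is not regular enough for the classical quasilinear theory you invoke; the non-degenerate approximations must also smooth $\phi$, as in the paper's choice of regular nonlinearities $\phi_n$.
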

\begin{proof}
The procedure for constructing a weak energy solution is by now quite standard, though elaborate, so we prefer not to give full details here: for the reader's convenience, we refer to \cite[Sections 5.5, 11.2]{Vbook}, \cite[Section 3]{GMP13} and \cite[Section 1.3.2]{Mur} for accurate step-by-step proofs (in various contexts). 

We recall that the main idea is to begin by considering (regular) data $ u_0 \in \LL^1(\Omega)\cap \LL^\infty(\Omega) $, solve the problem upon replacing $ \phi $ with a suitable sequence of regular, non-degenerate nonlinearities $ \phi_n $ approximating $ \phi $ and then pass to the limit as $ n \to \infty $. At this level $ \Omega $ is supposed to be also bounded and regular. Afterwards, one first removes the assumptions on the domain by taking a sequence of regular domains such that $ \overline{\Omega}_n \Subset \Omega  $ which eventually covers the whole $ \Omega $, and finally removes the assumptions on the initial datum by picking a sequence of regular initial data $ u_{0n} \in \LL^1(\Omega)\cap \LL^\infty(\Omega) $ such that $ u_{0n} \to u_0 $ and $ \psi(u_{0n}) \to \psi(u_{0}) $ in $ \LL^1(\Omega)  $. In most of these steps estimates \eqref{eest}, \eqref{L1-est} and \eqref{eq: non-exp} (to be understood on the approximating sequences of solutions), plus suitable bounds on time derivatives (key to have compactness), are crucially exploited.

Let us just discuss a subtle question in the last passage to the limit, where we cannot use \eqref{eq: non-exp}. That is, in order to show that $ \nabla\phi(u_n) $ converges (weakly) to $ \nabla \phi(u) $ in $ \LL^2((0,T);[\LL^2(\Omega)]^N) $, it is essential to establish that $ \phi(u_n) $ converges to $ \phi(u) $ at least (weakly) in $ \LL^1_{\rm loc}(\Omega \times (0,T) ) $. To this aim, first of all note that by means of the local Poincar\'e inequality, for all bounded, regular $ \overline{\Omega}_0 \Subset \Omega $ there holds 
\begin{equation}\label{eq: poin-local}
\big\| \phi(u_n)(t) - \overline{\phi(u_n)(t)} \big\|_{\LL^2(\Omega_0)} \le C_P(\Omega_0) \left\| \nabla \phi(u_n)(t) \right\|_{\LL^2(\Omega_0)} \quad \textrm{for a.e. } t \in (0,T) \, , 
\end{equation} 
where for simplicity we still denote by $ \overline{f} $ the mean value of $ f \in \LL^1(\Omega_0) $ in $ \Omega_0 $. Thanks to the energy inequality we know, in particular, that 
\begin{equation}\label{sav-1}
\left| \{ x \in \Omega : \, \left| u_n(x,t) \right| > c \} \right| \le \frac{b}{\min\{\psi(c),\psi(-c)\}} \, , \quad b:= \limsup_{n\to\infty} \int_{\Omega} \psi(u_{0n}(x)) \, dx < \infty \, ,
\end{equation}
for all $c>0$, where we used the fact that $ \psi $ is increasing in $ (0,+\infty) $ and decreasing in $ (-\infty,0) $, with $ \psi(0)=0 $. Since $ \lim_{u \to \pm \infty} \psi(u) = + \infty $, upon picking $ c $ large enough (independently of $n$), from \eqref{sav-1} we infer that there exists $ E_n \subset \Omega_0 $ such that 
\begin{equation}\label{sav-2}
\left| u_n(x,t) \right| \le c \quad \textrm{for a.e. } x \in E_n  \, , \quad \left| E_n \right| \ge \frac{1}{2} \left| \Omega_0 \right| .
\end{equation}
As $ \phi $ is increasing, by combining \eqref{eq: poin-local} with \eqref{sav-2} we obtain
\begin{equation}\label{sav-3}
\big|  \overline{\phi(u_n)(t)}  \big| \le \sqrt{2}M + C_P(\Omega_0) \, \sqrt{{2}{\left|\Omega_0\right|^{-1}}} \left\| \nabla \phi(u_n)(t) \right\|_{\LL^2(\Omega_0)} , \quad M:=\max\{\phi(c),-\phi(-c)\} \, .
\end{equation}
By exploiting \eqref{sav-3} and again \eqref{eq: poin-local}, we therefore end up with 
\begin{equation}\label{sav-4}
\left\| \phi(u_n)(t) \right\|_{\LL^2(\Omega_0)} \le \sqrt{2\left| \Omega_0 \right|}\,M + \big(\sqrt2 + 1 \big) \, C_P(\Omega_0) \left\| \nabla \phi(u_n)(t) \right\|_{\LL^2(\Omega_0)} .
\end{equation}
Hence, if we square \eqref{sav-4}, integrate in $ (0,T) $ and use the energy inequality, we finally deduce that $ \phi(u_n) $ is bounded in $ \LL^2(\Omega_0 \times [0,T]) $, which is enough for our purposes (pointwise convergence to $ \phi(u) $ is already ensured e.g.~by the $\LL^1$-contractivity). 

We stress that in the plain porous-medium or fast-diffusion cases, namely ${\phi(u)=u^m}$ for some $m>0$, the above convergence is a simple consequence of the boundedness of $u_n$ in $ \LL^{m+1}(\Omega\times(0,T)) $.
\end{proof}

\begin{pro}[Limit solutions]\label{thm-limit}
There exists a well-defined, $1$-Lipschitz map acting from $ \LL^1(\Omega) $ to $ \LL^\infty(\mathbb{R}^+;\LL^1(\Omega)) $ which associates with each $ u_0 \in \LL^1(\Omega) $ the limit $u $ in $ \LL^\infty(\mathbb{R}^+;\LL^1(\Omega)) $ of (any) sequence of weak energy solutions $ u_n $ corresponding to initial data $ u_{0n} \in \LL^1(\Omega) $, with $ \psi(u_{0n}) \in \LL^1(\Omega) $, such that $ u_{0n} \to u_0 $ in $ \LL^1(\Omega) $. The $ \LL^1 $-contraction and the comparison properties are preserved at the limit.
\end{pro}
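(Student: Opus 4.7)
The whole statement is an essentially standard ``closure by $L^1$-contraction'' argument, built on top of Proposition \ref{thm-exi}. The plan is to first construct, for each $u_0\in\LL^1(\Omega)$, an explicit approximating sequence $u_{0n}$ satisfying the hypotheses of Proposition \ref{thm-exi}, namely $\psi(u_{0n})\in\LL^1(\Omega)$ and $u_{0n}\to u_0$ in $\LL^1(\Omega)$. A concrete choice is $u_{0n}:=\bigl((u_0\chi_{\Omega_n})\wedge n\bigr)\vee(-n)$, where $\Omega_n\Subset\Omega$ is an exhaustion of $\Omega$ by bounded subdomains: then $u_{0n}$ is bounded and supported on a set of finite measure, so $\psi(u_{0n})\in\LL^1(\Omega)$ by continuity of $\psi$, while $u_{0n}\to u_0$ in $\LL^1(\Omega)$ by dominated convergence. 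This proves such approximating sequences exist.

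\textbf{Cauchy property and construction of the map.} Let $u_n$ be the weak energy solution associated with $u_{0n}$ by Proposition \ref{thm-exi}. The contraction estimate \eqref{L1-est} gives
\begin{equation*}
\|u_n(t)-u_m(t)\|_1 \le \|u_{0n}-u_{0m}\|_1 \quad \textrm{for a.e. } t>0 \, ,
\end{equation*}
so $\{u_n\}$ is Cauchy in $\LL^\infty(\mathbb{R}^+;\LL^1(\Omega))$ and converges to some $u$ in this space. To check the limit is independent of the approximating sequence, let $v_{0n}\to u_0$ in $\LL^1(\Omega)$ with $\psi(v_{0n})\in\LL^1(\Omega)$ and let $v_n$ be the corresponding weak energy solutions; then
\begin{equation*}
\|u_n(t)-v_n(t)\|_1\le\|u_{0n}-v_{0n}\|_1\to 0 \, ,
\end{equation*}
so the limits coincide. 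This defines the map $u_0\mapsto u$ unambiguously on the whole of $\LL^1(\Omega)$.

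\textbf{Preservation of contraction, comparison and the Lipschitz constant.} Given $u_0,v_0\in\LL^1(\Omega)$ with corresponding limit solutions $u,v$, pick approximating sequences $u_{0n}\to u_0$ and $v_{0n}\to v_0$ as above and let $u_n,v_n$ be the associated weak energy solutions. Passing to the limit in
\begin{equation*}
\|u_n(t)-v_n(t)\|_1 \le \|u_{0n}-v_{0n}\|_1
\end{equation*}
yields $\|u(t)-v(t)\|_1\le\|u_0-v_0\|_1$ for a.e.~$t>0$, which simultaneously proves $L^1$-contraction at the limit and the $1$-Lipschitz continuity of the map. For the comparison statement, assume $u_0\le v_0$ a.e.; then the truncations $u_{0n}$ and $v_{0n}$ defined above satisfy $u_{0n}\le v_{0n}$ a.e., hence by the comparison part of Proposition \ref{thm-exi} we have $u_n\le v_n$ a.e., and this inequality is preserved in the $\LL^1$-limit.

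\textbf{Expected difficulty.} There is no really hard step: the whole argument rests on the $\LL^1$-contraction \eqref{L1-est} already provided by Proposition \ref{thm-exi}. The only point requiring a little care is the construction of an approximating sequence for a generic $u_0\in\LL^1(\Omega)$ satisfying $\psi(u_{0n})\in\LL^1(\Omega)$, since $\psi$ can grow very fast and $\Omega$ may have infinite measure; truncation combined with restriction to a bounded exhaustion bypasses this, and once the sequence is available everything else is straightforward functional-analytic closure.
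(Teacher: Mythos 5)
Your proof is correct and follows essentially the same route as the paper: the $\LL^1$-contraction \eqref{L1-est} gives the Cauchy property, interleaving two approximating sequences gives well-definedness and the $1$-Lipschitz bound, and comparison passes to the limit via a.e.\ (pointwise) convergence. Your explicit truncation-plus-exhaustion construction of admissible approximating data is a small addition the paper leaves implicit (it simply cites V\'azquez), and it is a correct way to guarantee the map is defined on all of $\LL^1(\Omega)$.
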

\begin{proof}
It is a standard fact, see e.g.~\cite[Theorems 6.2 and 11.3]{Vbook} for detailed proofs: one exploits \eqref{L1-est} to show that $ u_n $ is a Cauchy sequence in $ \LL^\infty(\mathbb{R}^+;\LL^1(\Omega)) $, which therefore admits a limit $ u $. Given another $ v_0 \in \LL^1(\Omega) $, by taking any of the corresponding approximating sequences $  v_{0n} $, considering the associated sequence of solutions $ v_n $ as above and passing to the limit in \eqref{L1-est}, we establish at once that the map is well defined (i.e.~the limit point is independent of the approximating sequence) and $1$-Lipschitz. Alternatively, this is nothing but an application of the bounded extension Theorem for Lipschitz maps between Banach spaces. Comparison is then preserved in view e.g.~of pointwise convergence (up to subsequences). 
\end{proof}

Uniqueness of weak (energy) solutions for problem \eqref{pb} is due to a classical trick, which goes back to Ole\u{\i}nik \cite{OL2}. However, we emphasize that in the forthcoming theorem we do not require further integrability or boundedness properties on $ u $ or $ \phi(u) $, in contrast with known results available in the literature: see for instance \cite[Theorems 5.3 and 11.1]{Vbook} and \cite[Propositions 6 and 8]{GMP13} in the local case or \cite[Theorem 1.1]{DQR} and \cite[Theorem 2.4 and Corollaries 2.7--2.9]{DEJ} in the case of generalized nonlocal filtration equations with rough kernels. This can be done by means of an elementary truncation argument.
\begin{pro}[Ole\u{\i}nik's uniqueness]\label{thm-uniq}
There exists at most one weak solution $u$ of \eqref{pb} such that 
\begin{equation}\label{cond: olei}
\nabla \phi(u)  \in \LL^2(\Omega \times (0,T)) \quad  \forall T>0 \, .
\end{equation}
\end{pro}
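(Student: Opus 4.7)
The plan is to run the classical Ole\u{\i}nik duality trick, with a truncation that compensates for the fact that only $\nabla\phi(u)\in \LL^2$ is assumed, not an a priori bound on $\phi(u)$ itself. Let $u_1,u_2$ be two weak solutions of \eqref{pb} sharing the initial datum $u_0$ and both satisfying \eqref{cond: olei}; subtracting the identities in Definition \ref{def-gmp} cancels the datum term, so that for every admissible $\eta$ and every $T>0$
\[
\int_0^T \!\! \int_\Omega (u_1-u_2) \, \eta_t \, dx\, dt \,=\, \int_0^T \!\! \int_\Omega \nabla w \cdot \nabla \eta \, dx\, dt, \qquad w := \phi(u_1)-\phi(u_2).
\]
The formal Ole\u{\i}nik choice $\eta(x,t) := \int_t^T w(x,s)\, ds$ would immediately close the argument by producing $-\iint (u_1-u_2)w$ on the left (nonpositive by monotonicity of $\phi$) and $\tfrac12\int_\Omega |\int_0^T \nabla w\, ds|^2\,dx$ on the right (nonnegative); however, it is not admissible by Definition \ref{def-gmp} since $w$ need not belong to $\LL^\infty$.

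To fix this I would introduce the truncation $\tau_k(r):=\max\{-k,\min\{k,r\}\}$ and plug in
\[
\zeta_k(x,t) := \int_t^T \tau_k(w)(x,s)\, ds, \qquad t\in [0,T].
\]
Then $|\zeta_k|\le kT$, $\partial_t \zeta_k = -\tau_k(w)$ is bounded by $k$, and Cauchy--Schwarz in time combined with $|\nabla \tau_k(w)|\le |\nabla w|$ yields $\|\nabla \zeta_k(t)\|_2^2 \le T \,\|\nabla w\|_{\LL^2(\Omega\times(0,T))}^2$ for a.e.~$t$, so $\zeta_k$ belongs to the admissible class. Inserting it, the left-hand side becomes $-\iint (u_1-u_2)\,\tau_k(w)$, which is nonpositive since $\mathrm{sgn}(u_1-u_2)=\mathrm{sgn}(\tau_k(w))$ by monotonicity of $\phi$. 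For the right-hand side I would decompose $\nabla w = \nabla \tau_k(w) + \chi_{\{|w|\ge k\}}\nabla w$: using $\partial_t\nabla \zeta_k = -\nabla \tau_k(w)$ and $\nabla \zeta_k(\cdot,T)=0$, the first piece produces the nonnegative boundary term $\tfrac12\int_\Omega |\nabla \zeta_k(x,0)|^2\, dx$, whereas the remainder is estimated by Cauchy--Schwarz and the uniform bound on $\nabla \zeta_k$ as at most $T\,\|\chi_{\{|w|\ge k\}}\nabla w\|_{\LL^2}\cdot \|\nabla w\|_{\LL^2}$.

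To finish I would send $k\to\infty$: dominated convergence (justified by the a.e.\ finiteness of $w$ and $\nabla w\in\LL^2$) drives the remainder to zero, forcing both the boundary term and the left-hand side to vanish. Monotone convergence applied to the nonnegative integrand $(u_1-u_2)\tau_k(w)$ then yields $\iint (u_1-u_2)\,w=0$, and strict monotonicity of $\phi$ upgrades this to $u_1=u_2$ a.e.\ in $\Omega\times(0,T)$; arbitrariness of $T$ concludes the proof. The main obstacle is picking a truncation that is simultaneously compatible with all the ingredients of the identity: it must preserve the sign of $(u_1-u_2)\tau_k(w)$, allow the standard chain rule $\nabla\tau_k(w) = \chi_{\{|w|<k\}}\nabla w$, and make the mismatch between $\partial_t \nabla \zeta_k$ and the ``correct'' $-\nabla w$ negligible in the limit. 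The flat truncation fulfils all three requirements.
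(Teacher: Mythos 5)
Your argument is correct and follows essentially the same route as the paper: Ole\u{\i}nik's duality test function, made admissible by a truncation whose effect is then removed in the limit using \eqref{cond: olei}. The only minor difference is that the paper truncates the solutions themselves, taking $\eta_n=\int_t^T\left[\phi(u_n)-\phi(v_n)\right]ds$ with $u_n=-n\vee(n\wedge u)$, and passes to the limit in both terms of the resulting identity (monotone convergence in the diagonal term, dominated convergence in the gradient term), whereas you truncate the difference $w=\phi(u_1)-\phi(u_2)$ and extract the nonnegative boundary term already at each fixed level, estimating the leftover $\chi_{\{|w|\ge k\}}\nabla w$ contribution by Cauchy--Schwarz; both variants rely on \eqref{cond: olei} in exactly the same way.
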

\begin{proof}
The basic idea consists in plugging
$ \eta(x,t)=\int_t^T \left[ \phi(u(x,s)) - \phi(v(x,s)) \right] ds $
in the weak formulation satisfied by the difference between $u$ and $v$ (the latter being any other solution fulfilling \eqref{cond: olei}) and perform the same computations as in the proof of \cite[Theorem 5.3]{Vbook} (see also \cite[Propositions 6 and 8]{GMP13}). We only need to justify the use of $ \eta $ as a test function. To this aim, let us set 
$ 
\eta_n := \int_t^T \left[ \phi(u_n(x,s)) - \phi(v_n(x,s)) \right] ds \, , 
$
where $ u_n:= -n \vee (n \wedge u) $ and $ v_n:= -n \vee (n \wedge v) $.
Upon picking $ \eta_n $ in place of $ \eta $ we obtain the key identity
\begin{equation*}\label{keyid}
\begin{aligned} 
\int_0^T \int_\Omega \left( u(x,t)-v(x,t) \right) \left[ \phi(u_n(x,t)) - \phi(v_n(x,t)) \right] dx dt &  \\
+ \int_0^T \int_\Omega \nabla \!\left[ \phi(u)-\phi(v) \right]\!(x,t) \cdot \left( \int_t^T \nabla\!\left[ \phi(u_n) - \phi(v_n) \right]\!(x,s) \, ds  \right)  dx dt & =0  \, .
\end{aligned}
\end{equation*}
Since $ \phi $ is increasing we can pass to the limit by monotone convergence in the first integral, while in the second integral we can exploit dominated convergence in view of \eqref{cond: olei}. The identity $ u \equiv v $ then follows as in the proof of \cite[Theorem 5.3]{Vbook}.
\end{proof}

\begin{oss}[The concept of solution and continuity properties]\label{rem: uniq}\rm
When referring to the ``solution'' of problem \eqref{pb} in Theorems \ref{thm01}, \ref{thm-asym-zero} and \ref{thm-asym-nonzero}, we shall mean the weak energy solution constructed in Proposition \ref{thm-exi} if $ u_0 \in \LL^1(\Omega) $ with $ \psi(u_0) \in \LL^1(\Omega) $ (note that initial data belonging to $ \LL^1(\Omega)\cap\LL^\infty(\Omega) $ are always included in this category), which by virtue of Proposition \ref{thm-uniq} is the \emph{unique} weak solution complying with \eqref{cond: olei}. In the general case $ u_0 \in \LL^1(\Omega) $ we shall just mean the limit solution provided by Proposition \ref{thm-limit}. We stress that, in agreement with the latter results, such solutions can always be thought as limits (after several approximations) of regular solutions to suitable non-degenerate parabolic problems, a fact that we shall exploit in order to justify some of the computations performed in Sections \ref{sect: short} and \ref{sect: long}. 

Weak energy solutions are in fact continuous curves in $ \LL^1(\Omega) $, i.e.~$ u \in C([0,\infty);\LL^1(\Omega)) $. This is due to an alternative method for constructing such solutions by means of \emph{time discretization}, which takes advantage of the celebrated Crandall-Liggett theorem: see \cite[Chapter 10]{Vbook}. The $ \LL^1 $-continuity is then trivially inherited by limit solutions: for that reason, and in order to lighten the reading, throughout Sections \ref{sect: res}, \ref{sect: short}, \ref{sect: long} we wrote ``$ \forall t $'' in place of ``for a.e.~$t$''. 

Actually, by exploiting the latter properties, we can infer uniqueness in a wider class of solutions, namely that of functions which are weak energy solutions for all \emph{positive} times (i.e.~with the time origin shifted to $ \varepsilon $, for all $ \varepsilon>0 $) and belong to $ C([0,\infty);\LL^1(\Omega)) $. The corresponding argument is analogous to the one used in the proof of \cite[Theorem 6.12]{Vbook}. We stress that under assumptions \eqref{cond-phi-1}--\eqref{cond-phi-3}, thanks to Theorem \ref{thm01}, \emph{a posteriori} limit solutions are included in such class.
\end{oss}
%
%
%
%
%

\section{Short-time estimates: proofs}\label{sect: short}

As pointed out above, an improvement of the arguments of \cite{GM13} allows us to remove the hypothesis that $ |\Omega| $ is finite, which is relevant also in the single-power framework. The technical novelty, basically contained in Lemma \ref{lemma: q_0-t-ast} below, consists in the use of Young-type inequalities rather than interpolation inequalities, which prove to be better suited to handle typical recurrence relations that arise out of Moser iterations (in particular additional terms due to the fact that we work with Neumann problems).

We begin with the analysis of the case $ m_1>m_2 $, where proofs are more involved since it is not possible to deduce from \eqref{cond-phi-2}--\eqref{cond-phi-3} a global lower bound on $\phi'(u)$ relying on a \emph{single} power. 

\begin{lem}\label{lemma: q_0-t-ast}
Let the hypotheses of Theorem \ref{thm01} be fulfilled, with the additional assumptions $ m_1>m_2 $, $ u_0 \in \LL^\infty(\Omega) $ and $ q_0>1 $. Let 
\begin{equation}\label{def-t-ast}
t^*:=\sup\left\{t \ge 0 : \ \Vert u(t) \Vert_{\infty} > 1 \right\}
\end{equation}
and suppose that $ t^\ast>0 $. Then there holds the smoothing estimate
\begin{equation}\label{case-1-smooth-m2}
\left\| u(t) \right\|_{\infty} \leq K \left( t^{-\frac{N}{2q_0+N(m_2-1)}} \, \left\Vert u_0 \right\Vert_{q_0}^{\frac{2q_0}{2q_0+N(m_2-1)}} + \left\Vert u_0 \right\Vert_{q_0} \right) \quad \forall t \in \left( 0 , t^\ast \right) ,
\end{equation}
where $K$ is a positive constant depending only on $ m_1,m_2,c_1,c_2,C_S,N,q_0 $.
\end{lem}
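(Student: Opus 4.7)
My plan is a Moser iteration tailored to the two-power structure of $\phi^\prime$: track $\|u(t)\|_{p_k}$ along a geometric sequence $p_k \nearrow \infty$ while exploiting only the $m_2$-lower bound \eqref{cond-phi-3} in the region $\{|u|>1\}$, and simply discarding the dissipation on $\{|u|\le 1\}$ (where we have no usable uniform bound, since $m_1>m_2$ means $|u|^{m_1-1}$ is the smaller of the two powers there). The hypothesis $t\in(0,t^\ast)$ enters only as the natural range of validity of the resulting estimate; the proof itself does not require $|u|>1$ pointwise.

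First, working with the classical approximations from Proposition \ref{thm-exi} so that all formal manipulations are rigorous, I multiply the PDE by $p\,|u|^{p-2}u$ and integrate by parts to obtain
\begin{equation*}
\frac{d}{dt} \|u(t)\|_p^p \;=\; -p(p-1) \int_\Omega \phi^\prime(u)\, |u|^{p-2}\, |\nabla u|^2\, dx \;\le\; -\frac{4\,c_2\, p(p-1)}{(p+m_2-1)^2}\, \| \nabla W \|_2^2,
\end{equation*}
where $W := (|u|\vee 1)^{(p+m_2-1)/2} - 1$ vanishes on $\{|u|\le 1\}$ and equals $|u|^{(p+m_2-1)/2} - 1$ on $\{|u|>1\}$. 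The key reason for using this shifted $W$ rather than the bare power $|u|^{(p+m_2-1)/2}$ is that the latter need not lie in $\LL^2(\Omega)$ when $|\Omega|=\infty$, whereas $W$ is supported on $\{|u|>1\}\subset \{|u|>1\}$, a set of finite measure bounded by $\|u(t)\|_{q_0}^{q_0}\le \|u_0\|_{q_0}^{q_0}$ thanks to the non-expansivity \eqref{eq: non-exp}.

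Second, I feed this into the Gagliardo-Nirenberg-Sobolev inequality \eqref{NGN-bis} applied to $W$ with $s=2$ and $r$ chosen as a function of $p$ so that, after unfolding $W\ge 0$ and using the pointwise comparison $|u|^{(p+m_2-1)/2} \le W + 1$, the resulting $\LL^r$-norm of $W$ is converted into an $\LL^{p^\prime}$-norm of $u$ with $p^\prime<p$, plus a constant multiple of a positive power of $\|u_0\|_{q_0}$ coming from $|\{|u|>1\}|$. The Neumann-type additive term $\|W\|_2$ inside the parenthesis in \eqref{NGN-bis} is handled precisely via the Young-type splitting advertised in the text: rather than interpolating, I apply $(a+b)^\vartheta c^{1-\vartheta} \le \varepsilon\, a^2 + C(\varepsilon)\, b^{2\vartheta/(2-\vartheta)}\, c^{2(1-\vartheta)/(2-\vartheta)}$ with $\varepsilon$ small enough that the $\|\nabla W\|_2^2$ piece is absorbed back into the dissipation, and the leftover produces a remainder that is polynomial in $\|u(t)\|_{p^\prime}$ and $\|u_0\|_{q_0}$.

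Third, integrating the resulting Bernoulli-type differential inequality on $[t/2,t]$ in the usual Moser fashion gives a recursion between $\|u(t)\|_{p_k}$ and a maximum of $\|u(\tau)\|_{p_{k-1}}$ over $\tau\in[t/2,t]$ along a geometric sequence $p_k$; iteration from the anchor $p_0=q_0$ (where \eqref{eq: non-exp} supplies $\|u(\tau)\|_{q_0}\le\|u_0\|_{q_0}$) and passage to the limit $p_k\to\infty$ then yield \eqref{case-1-smooth-m2}. The main technical obstacle is the bookkeeping of the Young-type remainder: one must show that the additive contributions produced at each level iterate into the single additive $\|u_0\|_{q_0}$ on the right-hand side of \eqref{case-1-smooth-m2}, that the exponents of $t$ accumulate to exactly $N/(2q_0+N(m_2-1))$, and that the infinite product of GNS constants converges, for which the uniformity of $C_S$ on compact ranges of $r,s$ stated after \eqref{NGN-parameters-r-s-2} is essential. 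It is exactly this additive splitting via Young --- in place of the interpolation used in \cite{GM13} --- that allows the argument to dispense with the hypothesis $|\Omega|<\infty$.
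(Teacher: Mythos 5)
Your route is genuinely different from the paper's, and as sketched it has a concrete flaw. The paper never truncates at level $1$: it uses $\|u(t)\|_\infty\le M:=\|u_0\|_\infty$ together with $m_1>m_2$ to get the global bound $\phi'(u)\ge c\,M^{-(m_1-m_2)}|u|^{m_1-1}$ (see \eqref{upperboundphiprime}), runs an $m_1$-type Moser iteration anchored at $\|u_0\|_{q_0}$ --- choosing $s=2p_k/(m_1+p_k-1)$ in \eqref{NGN-bis} precisely so that the extra Neumann term is an $\LL^{p_k}$ norm of $u$, hence non-increasing in time and equal to $\|u_0\|_{q_0}$ at the anchor --- and then devotes the second half of the proof to removing the $M$-dependence via two rounds of Young's inequality combined with $t/2$-shift iterations; that removal is the technical heart of the lemma. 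In your scheme the first step (the differential inequality for $\|u\|_p^p$ in terms of $\|\nabla W\|_2^2$) is fine, but the second step is not: with $s=2$, the term $\|W\|_2$ is, up to the harmless measure correction, $\bigl(\int_{\{|u|>1\}}|u|^{p+m_2-1}\,dx\bigr)^{1/2}$, i.e. a norm of $u$ of order $p+m_2-1>p$, not an $\LL^{p'}$ norm with $p'<p$ as you claim. On $\{|u|>1\}$ higher powers dominate lower ones, so converting this quantity downwards costs a factor $\|u\|_\infty^{m_2-1}\le M^{m_2-1}$, which reintroduces exactly the $M$-dependence you set out to avoid (and your proposal contains none of the machinery needed to remove it), while keeping it as is destroys the anchoring of the recursion at $\|u(t)\|_{q_0}\le\|u_0\|_{q_0}$. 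The natural repair is the paper's type of choice, $s=2p/(p+m_2-1)<2$, which turns the extra term into an $\LL^p$ quantity restricted to $\{|u|>1\}$.

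Even after that repair, the part you defer as ``bookkeeping'' is not routine and is where I would expect the argument to break. Each unfolding of $W$ costs additive terms controlled by $|\{|u(t)|>1\}|\le\|u_0\|_{q_0}^{q_0}$, which have homogeneity $q_0>1$ in $\|u_0\|_{q_0}$, whereas \eqref{case-1-smooth-m2} has an additive term of homogeneity exactly one and a constant $K$ independent of $\|u_0\|_\infty$; since $\phi$ is not a pure power, no scaling is available to normalize $\|u_0\|_{q_0}=1$. After extracting $p_k$-th roots these measure terms enter as $\|u_0\|_{q_0}^{q_0/p_k}\to1$, so a naive iteration tends to produce an additive constant of order one rather than $\|u_0\|_{q_0}$; such a weaker bound is useless for the sequel, because the proof of Theorem \ref{thm01} deduces $t^\ast\lesssim\|u_0\|_{q_0}^{2q_0/N}$ precisely from $1\le K\bigl({t^\ast}^{-\frac{N}{2q_0+N(m_2-1)}}\|u_0\|_{q_0}^{\frac{2q_0}{2q_0+N(m_2-1)}}+\|u_0\|_{q_0}\bigr)$, an inference that becomes vacuous if an extra additive constant appears. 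So either you show that these contributions recombine with the correct homogeneity --- which is the real difficulty, not a formality --- or you fall back on the paper's two-stage argument. Two minor points: your reason for subtracting $1$ is off ($|u|^{(p+m_2-1)/2}$ does belong to $\LL^2$ since $u\in\LL^{q_0}\cap\LL^\infty$; the subtraction is needed because $(|u|\vee1)^{(p+m_2-1)/2}$ is not in $\LL^2$ when $|\Omega|=\infty$ and because the dissipation only controls the gradient on $\{|u|>1\}$), and you are right that $t^\ast>0$ enters only to guarantee $\|u_0\|_\infty>1$, which in the paper is what validates \eqref{upperboundphiprime}.
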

\begin{proof}
Let $ M:=\Vert u_0\Vert_{\infty} $. Note that the non-expansivity of the norms (Proposition \ref{thm-exi}) and the assumption $ t^\ast>0 $ imply $ \| u_0 \|_\infty>1 $. Still by means of the non-expansivity of the norms, we know that $\Vert u(t)\Vert_{\infty} \leq M $ for all $ t>0 $, so that $u$ takes on values in the interval $[-M,M]$. As a consequence, the bounds \eqref{cond-phi-2}--\eqref{cond-phi-3} ensure that
\begin{equation}\label{upperboundphiprime}
\frac{c}{M^{m_1-m_2}} \, |u(x,t)|^{m_1-1} \le \phi'(u(x,t)) \quad  \textrm{for a.e. } (x,t) \in \mathbb{R}^d \times \mathbb{R}^+ \, ,
\end{equation}
where $ c:= c_1 \wedge c_2 $. Thanks to \eqref{upperboundphiprime}, which allows us to partially resort to the single-power case (\emph{i.e.}~to the standard porous medium equation), first of all we can set up a Moser-iteration scheme in the same spirit as \cite{GM13,GM14}, and then by means of another iteration we shall remove the dependence on $M$. 

Given $t>0$, let us consider the sequence of time steps $t_k=(1-2^{-k})\,t$, for all $ k \in \mathbb{N} $. Clearly, $t_0=0$ and $t_{\infty}=t$. Also, let $ p_k $ be an increasing sequence of positive numbers such that $p_0=q_0$ and $p_{\infty}=\infty$, which we shall explicitly define later. By multiplying the differential equation in \eqref{pb} by $u^{p_k-1}$, integrating by parts in $\Omega\times(t_k,t_{k+1})$ and using \eqref{upperboundphiprime}, we obtain: 
\begin{equation}\label{la3.2}
\begin{aligned}
& \frac{4 \, c \, p_k \,(p_k-1)}{M^{m_1-m_2}\,(m_1+p_k-1)^2} \, \int_{t_k}^{t_{k+1}} \int_{\Omega} \left| \nabla \! \left(u^{\frac{m_1+p_k-1}{2}}\right)\!(x,t)\right|^2 dx dt \\
\le & p_k \, (p_k-1) \, \int_{t_k}^{t_{k+1}} \int_{\Omega} \left|u(x,t)\right|^{p_k-2} \phi^\prime(u(x,t)) \left| \nabla u(x,t)\right|^2 dx dt \\
= & \Vert u(t_k)\Vert_{p_k}^{p_k} - \Vert u(t_{k+1})\Vert_{p_k}^{p_k} \\
\le & \Vert u(t_k)\Vert_{p_k}^{p_k} \, .
\end{aligned}
\end{equation}
We point out that \emph{a priori} $ u $ may not possess enough regularity to justify rigorously the above computation: nevertheless, this issue can be overcome by means of suitable approximation schemes, see e.g.~\cite[Proof of Lemma 3.3 and Remark 2]{GMP13} and the first part of Remark \ref{rem: uniq} (similar comments apply to the computations carried out below). In order to handle the left-hand side of inequality \eqref{la3.2}, it is convenient to apply the Gagliardo-Nirenberg-Sobolev inequality in the form \eqref{NGN-bis} to the function $ f=u^{(m_1+p_k-1)/{2}} $:
\begin{equation}\label{la3.2-bis}
\begin{aligned}
& \frac{2 \, c \, p_k \,(p_k-1)}{M^{m_1-m_2}\, C_S^{\frac{2}{\vartheta}} \,(m_1+p_k-1)^2} \, \int_{t_k}^{t_{k+1}} \frac{\left\| u(t) \right\|_{\frac{r(m_1+p_k-1)}{2}}^{\frac{m_1+p_k-1}{\vartheta}}}{\left\| u(t) \right\|^{\frac{(1-\vartheta)(m_1+p_k-1)}{\vartheta}}_{\frac{s(m_1+p_k-1)}{2}} } \, dt \\
\leq & \Vert u(t_k)\Vert_{p_k}^{p_k} + \frac{4 \, c \, p_k \,(p_k-1)}{M^{m_1-m_2}\,(m_1+p_k-1)^2} \, \int_{t_k}^{t_{k+1}} \left\| u(t) \right\|^{m_1+p_k-1}_{\frac{s(m_1+p_k-1)}{2}} dt \, .
\end{aligned}
\end{equation}
By choosing 
$$ s=\frac{2p_k}{m_1+p_k-1} \quad \textrm{and} \quad r=2+\frac{2s}{N}=2\,\frac{(N+2)p_k+N(m_1-1)}{N(m_1+p_k-1)} $$ 
(note that $ s<2 $ and \eqref{NGN-parameters-r-s-1}--\eqref{NGN-parameters-r-s-2} are always fulfilled), recalling \eqref{NGN-parameters-theta}, the definition of $ t_k $ and using the non-expansivity of the norms, from \eqref{la3.2-bis} we infer
\begin{equation}\label{la3.2-ter}
\begin{aligned}
& \frac{c \, p_k \,(p_k-1)\, t}{M^{m_1-m_2} \, C_S^{\frac{2}{\vartheta}} \,(m_1+p_k-1)^2 \, 2^k} \times \frac{\left\| u(t_{k+1}) \right\|_{p_{k+1}}^{p_{k+1}}}{\left\| u(t_k) \right\|^{\frac{2p_k}{N}}_{p_k} } \\
\leq & \Vert u(t_k)\Vert_{p_k}^{p_k} + \frac{2 \, c \, p_k \,(p_k-1)\,t}{M^{m_1-m_2}\,(m_1+p_k-1)^2\,2^k} \, \left\| u(t_k) \right\|^{m_1+p_k-1}_{p_k} ,
\end{aligned}
\end{equation}
where $  p_{k+1} $ is defined recursively by
\begin{equation*}\label{eq: pk-pk+1}
p_{k+1}=\frac{N+2}{N} \, p_k+m_1-1 \, ,
\end{equation*}	
or equivalently
\begin{equation}\label{eq: pk-explicit}
p_{k}= \left[ q_0 + \frac{N(m_1-1)}{2} \right] \left( \frac{N+2}{2} \right)^k - \frac{N(m_1-1)}{2} \, .
\end{equation}	
From here on we shall denote by $D$ a generic positive constant that depends only on $ m_1,m_2,c,C_S,N,q_0 $. Hence, upon observing that $ C_S^{1/\vartheta} $ can be bounded independently of $ s $ and $r$ chosen as above, estimate \eqref{la3.2-ter} reads
\begin{equation}\label{la3.2-quater}
\left\| u(t_{k+1}) \right\|_{p_{k+1}}^{p_{k+1}} \le D \left( \frac{2^k \, M^{m_1-m_2}}{t} \, \Vert u(t_k)\Vert_{p_k}^{\frac{N+2}{N}\,p_k} + \Vert u(t_k)\Vert_{p_k}^{\frac{N+2}{N}\,p_k + m_1-1} \right) .
\end{equation} 
By combining the non-expansivity of the norms, the monotonicity of $ p_k $, interpolation and Young's inequality, we obtain:
$$ \| u(t_k) \|_{p_k} \le \| u_0 \|_\infty + \| u_0 \|_{q_0} = M + \| u_0 \|_{q_0} \, , $$
so that 	\eqref{la3.2-quater} implies 
\begin{equation}\label{la3.2-young}
\left\| u(t_{k+1}) \right\|_{p_{k+1}} \le D^{\frac{k+1}{p_{k+1}}} \left( \frac{M^{m_1-m_2}}{t} \,  + M^{m_1-1} + \| u_0 \|_{q_0}^{m_1-1} \right)^{\frac{1}{p_{k+1}}} \Vert u(t_k)\Vert_{p_k}^{\frac{N+2}{N}\,\frac{p_k}{p_{k+1}}} .
\end{equation} 
The iteration of \eqref{la3.2-young} and again the non-expansivity of the norms yield
\begin{equation*}\label{la3.2-young-iter}
\begin{aligned}
\left\| u(t) \right\|_{p_{k+1}} 
\le  D^{\frac{\sum_{h=1}^{k+1} h \left( \frac{N+2}{N} \right)^{k+1-h} }{p_{k+1}}} \left( \frac{M^{m_1-m_2}}{t} \,  + M^{m_1-1} + \| u_0 \|_{q_0}^{m_1-1} \right)^{\frac{\sum_{h=0}^{k} \left( \frac{N+2}{N} \right)^h }{p_{k+1}}} \Vert u_0 \Vert_{q_0}^{\left(\frac{N+2}{N}\right)^{k+1} \frac{q_0}{p_{k+1}}} .
\end{aligned}
\end{equation*} 
By letting $ k \to \infty $, in view of \eqref{eq: pk-explicit} we end up with
\begin{equation*}\label{la3.2-young-limit}
\left\| u(t) \right\|_{\infty} \le D \left( \frac{M^{m_1-m_2}}{t} \,  + M^{m_1-1} + \| u_0 \|_{q_0}^{m_1-1} \right)^{\frac{N}{2q_0+N(m_1-1)}} \left\| u_0 \right\|_{q_0}^{\frac{2q_0}{2q_0+N(m_1-1)}} ,
\end{equation*}
whence
\begin{equation}\label{la3.2-young-limit-bis}
\left\| u(t) \right\|_{\infty} \le D \left( \frac{M^{\frac{m_1-m_2}{m_1-1}}}{t^{\frac{1}{m_1-1}}} \,  + M + \| u_0 \|_{q_0} \right)^{\frac{N(m_1-1)}{2q_0+N(m_1-1)}} \left\| u_0 \right\|_{q_0}^{\frac{2q_0}{2q_0+N(m_1-1)}} .
\end{equation}
%
Thanks to Young's inequality  
\begin{equation*}\label{young-first}
A^{\theta} \, B^{1-\theta} \le \varepsilon \, \theta \, A + \varepsilon^{-\frac{\theta}{1-\theta}} \, (1-\theta) \, B  \quad \forall A,B,\varepsilon > 0 \, , \quad \theta := \frac{N(m_1-1)}{2q_0+N(m_1-1)} \, ,
\end{equation*} 
from \eqref{la3.2-young-limit-bis} we infer 
\begin{equation}\label{est-iter-1}
\left\| u(t) \right\|_{\infty} \le D \left[ \frac{M^{\frac{m_1-m_2}{m_1-1} \, \theta }}{t^{\frac{\theta}{m_1-1}}} \left\| u_0 \right\|_{q_0}^{1-\theta} + \varepsilon \, \theta \, M + \left[ \varepsilon \, \theta + \varepsilon^{-\frac{\theta}{1-\theta}} \, (1-\theta) \right] \| u_0 \|_{q_0} \right] . 
\end{equation}
Now let us pick 
$$  \varepsilon= \left(D \, \theta \, 2^{\frac{1+\theta}{m_1-1}} \right)^{-1}  , $$ 
so that \eqref{est-iter-1} can be rewritten as 
\begin{equation}\label{est-iter-2}
\left\| u(t) \right\|_{\infty} \le \frac{1}{2^{\frac{1+\theta}{m_1-1}}} \, M + D \, \frac{M^{\frac{m_1-m_2}{m_1-1}\,\theta}}{t^{\frac{\theta}{m_1-1}}} \, \| u_0 \|_{q_0}^{1-\theta}  + D \, \| u_0 \|_{q_0} 
\end{equation}
for another positive constant $D$ which, as usual, we do not relabel. Our goal is to (partially) remove the dependence of the right-hand side of \eqref{est-iter-2} on $M$: to this end, first of all we exploit a classical $ t/2 $-shift argument (see e.g.~\cite[Proof of Theorem 3.2]{GM13}) combined with the non-expansivity of the norms, which yields
\begin{equation}\label{est-iter-3}
\big\| u\big(t/2^k\big) \big\|_{\infty} \le \frac{\big\| u\big(t/2^{k+1}\big) \big\|_{\infty}}{2^{\frac{1+\theta}{m_1-1}}} + 2^{\frac{\theta(k+1)}{m_1-1}} \, D \, \frac{M^{\frac{m_1-m_2}{m_1-1}\,\theta}}{t^{\frac{\theta}{m_1-1}}} \, \| u_0 \|_{q_0}^{1-\theta} + D \, \| u_0 \|_{q_0} \quad \forall k \in \mathbb{N} \, .
\end{equation}
By iterating \eqref{est-iter-3} we therefore obtain 
\begin{equation*}\label{est-iter-4}
\left\| u(t) \right\|_{\infty} \le \frac{M}{2^{\frac{(1+\theta)(\ell+1)}{m_1-1}}} + 2^{\frac{\theta}{m_1-1}} \, D \, \frac{M^{\frac{m_1-m_2}{m_1-1}\,\theta}}{t^{\frac{\theta}{m_1-1}}}  \, \| u_0 \|_{q_0}^{1-\theta} \, \sum_{k=0}^{\ell} 2^{-\frac{k}{m_1-1}} + D \, \| u_0 \|_{q_0} \, \sum_{k=0}^\ell 2^{-\frac{(1+\theta) k}{m_1-1}} 
\end{equation*}  
for all $ \ell \in \mathbb{N} $, whence, upon taking limits as $ \ell \to \infty $, 
\begin{equation}\label{est-iter-limit}
\| u(t) \|_{\infty} \le  D \left( \frac{M^{\frac{m_1-m_2}{m_1-1}\,\theta}}{t^{\frac{\theta}{m_1-1}}}  \, \| u_0 \|_{q_0}^{1-\theta} + \| u_0 \|_{q_0} \right) .
\end{equation} 
In order to remove definitively the dependence of the right-hand side of \eqref{est-iter-limit} on $ M $ we can argue in a similar way as above to get, by means of Young's inequality,
\begin{equation}\label{est-iter-last-1}
\| u(t) \|_{\infty} \le  D \left( \varepsilon \, \theta_\star \, M + \varepsilon^{-\frac{\theta_\star}{1-\theta_\star}} \, (1-\theta_\star) \, \frac{\| u_0 \|_{q_0}^{\frac{2q_0}{2q_0+N(m_2-1)}}}{t^{\frac{N}{2q_0+N(m_2-1)}}}  + \| u_0 \|_{q_0} \right) ,
\end{equation} 
where we set
$$ \theta_\star := \tfrac{m_1-m_2}{m_1-1} \, \theta = \frac{N(m_1-m_2)}{2q_0+N(m_1-1)} \, . $$
By choosing
$$ \varepsilon=\left(D \, \theta_\star \, 2^{\frac{2q_0+2N (m_2-1)}{(m_2-1)[2q_0+N(m_2-1)]}}\right)^{-1} $$ 
and exploiting again a $ t/2 $-shift argument, from \eqref{est-iter-last-1} we infer 
\begin{equation}\label{est-iter-last-2}
\big\| u\big(t/2^k\big) \big\|_{\infty} \le \frac{\big\| u\big(t/2^{k+1}\big) \big\|_{\infty}}{2^{\frac{2q_0+2N (m_2-1)}{(m_2-1)[2q_0+N(m_2-1)]}}} + 2^{\frac{N(k+1)}{2q_0+N(m_2-1)}} \, D \, \frac{\| u_0 \|_{q_0}^{\frac{2q_0}{2q_0+N(m_2-1)}}}{t^{\frac{N}{2q_0+N(m_2-1)}}} +  D \, \| u_0 \|_{q_0} \quad \forall k \in \mathbb{N} \, .
\end{equation} 	
It is apparent that \eqref{est-iter-last-2} is of the same type as \eqref{est-iter-3}: by carrying out an analogous iteration, we therefore end up with \eqref{case-1-smooth-m2}.
\end{proof}	

We now extend the above result to $ q_0=1 $.
\begin{cor}\label{lemma: q_0-t-ast-ext}
Let the hypotheses of Theorem \ref{thm01} be fulfilled, with the additional assumptions $ m_1>m_2 $ and $ u_0 \in \LL^\infty(\Omega) $. Let $ t^\ast >0 $ be defined by \eqref{def-t-ast}. Then there holds the smoothing estimate
\begin{equation}\label{case-1-smooth-m2-ext}
\left\| u(t) \right\|_{\infty} \leq K \left( t^{-\frac{N}{2q_0+N(m_2-1)}} \left\| u_0 \right\|_{q_0}^{\frac{2q_0}{2q_0+N(m_2-1)}} + \left\| u_0 \right\|_{q_0} \right) \quad \forall t \in \left( 0 , t^\ast \right) ,
\end{equation}
where $K$ is a positive constant depending only on $ m_1,m_2,c_1,c_2,C_S,N $ (and in particular independent of $  q_0 \ge 1 $).
\end{cor}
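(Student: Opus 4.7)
The only genuinely new case is $q_0 = 1$, since Lemma \ref{lemma: q_0-t-ast} already handles $q_0 > 1$. Our plan is to apply the lemma at the \emph{fixed} intermediate exponent $q_0' = 2$ but with the time origin shifted to $t/2$, then trade the resulting $\LL^2$-norm of $u(t/2)$ for the $\LL^1$-norm of $u_0$ by elementary $\LL^p$-interpolation, absorb the leftover $\LL^\infty$-norm of $u(t/2)$ by Young's inequality, and finally close the argument by a $t/2$-shift iteration in the same spirit as the one carried out between \eqref{est-iter-2} and \eqref{est-iter-limit}.

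Concretely, applying Lemma \ref{lemma: q_0-t-ast} on the time interval $(t/2,t)$ — which is legitimate since $t/2 < t^\ast$ and the shifted $t^\ast$ equals $t^\ast - t/2 > 0$ — will yield
\begin{equation*}
\| u(t) \|_\infty \le K_2 \!\left( (t/2)^{-\alpha_2} \| u(t/2) \|_2^{\beta_2} + \| u(t/2) \|_2 \right) ,
\end{equation*}
with $\alpha_2 := N/(4+N(m_2-1))$, $\beta_2 := 4/(4+N(m_2-1))$ and $K_2$ depending only on the structural parameters. The elementary interpolation $\| u(t/2) \|_2 \le \| u(t/2) \|_\infty^{1/2} \| u_0 \|_1^{1/2}$, combined with the $\LL^1$-nonexpansivity provided by Proposition \ref{thm-exi}, plugged in above and followed by Young's inequality on each of the two resulting terms — in both cases tuning the Young exponent so that the final power of $\| u(t/2) \|_\infty$ becomes exactly $1$ — will produce, after the short computation $2\alpha_2/(2-\beta_2) = N/(2+N(m_2-1))$ and $\beta_2/(2-\beta_2) = 2/(2+N(m_2-1))$, an inequality of the form
\begin{equation*}
\| u(t) \|_\infty \le \varepsilon \, \| u(t/2) \|_\infty + C(\varepsilon) \!\left( t^{-\frac{N}{2+N(m_2-1)}} \| u_0 \|_1^{\frac{2}{2+N(m_2-1)}} + \| u_0 \|_1 \right) ,
\end{equation*}
valid for any $\varepsilon > 0$. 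A $t/2$-shift iteration — pick $\varepsilon$ so small that $\varepsilon \cdot 2^{N/(2+N(m_2-1))} < 1$, replace $t$ by $t/2^k$, and iterate $\ell$ times — will leave a geometric remainder $\varepsilon^{\ell+1} \| u(t/2^{\ell+1}) \|_\infty \le \varepsilon^{\ell+1} \| u_0 \|_\infty$ that vanishes as $\ell \to \infty$, and deliver \eqref{case-1-smooth-m2-ext} at $q_0 = 1$ with $K$ depending only on $m_1,m_2,c_1,c_2,C_S,N$.

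For $q_0 \in (1,2]$ the same scheme will run verbatim upon replacing the interpolation by $\| u(t/2) \|_2 \le \| u(t/2) \|_\infty^{1-q_0/2} \| u_0 \|_{q_0}^{q_0/2}$; the Young exponents will match up to produce $\alpha_{q_0} = N/(2q_0+N(m_2-1))$ and $\beta_{q_0} = 2q_0/(2q_0+N(m_2-1))$, and since $\alpha_{q_0}$ is monotone decreasing in $q_0$ a single choice of $\varepsilon$ will make the iteration converge uniformly on $[1,2]$. For $q_0 \ge 2$ Lemma \ref{lemma: q_0-t-ast} applies directly, and a careful inspection of its proof shows that $K(q_0)$ remains bounded on $[2,\infty)$: the only potential degeneration, namely the factor $1/(p_0-1)$ as $q_0 \to 1^+$, is absent here, while the accumulated exponent $\sum_k 1/p_{k+1}$ controlling the base constant is $O(1/q_0)$ for large $q_0$. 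We expect the main technical point to be precisely this uniformity bookkeeping, together with guaranteeing that the contraction factor $\varepsilon \cdot 2^{\alpha_{q_0}}$ in the $t/2$-shift iteration stays uniformly below $1$; both are handled by the monotonicity $\alpha_{q_0} \le \alpha_1$ on $[1,\infty)$, which allows a single choice of $\varepsilon$ to cover the entire range at once.
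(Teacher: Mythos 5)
Your argument is correct, and it reaches \eqref{case-1-smooth-m2-ext} by a route that differs in its mechanics from the paper's, though it draws on the same toolbox (Lemma \ref{lemma: q_0-t-ast}, $\LL^p$-interpolation, Young's inequality, non-expansivity of the norms and $t/2$-shift iterations). The paper keeps the time origin fixed, applies Lemma \ref{lemma: q_0-t-ast} at the variable exponent $q_0>1$, interpolates the \emph{datum} norm via $\|u_0\|_{q_0}\le\|u_0\|_{\infty}^{1-1/q_0}\|u_0\|_1^{1/q_0}$, and then runs a $t/2$-shift iteration in which $\|u(t/2^{k+1})\|_\infty$ enters with the sublinear power $\tfrac{2(q_0-1)}{2q_0+N(m_2-1)}$, followed by a separate Young-plus-shift step to remove the residual power of $\|u_0\|_\infty$; uniformity of $K$ for $q_0$ near $1$ is then obtained by noting that this extension argument is stable as the auxiliary exponent tends to $1$. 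You instead restart the flow at $t/2$, apply the Lemma at the \emph{fixed} exponent $2$ to the shifted solution --- legitimate because monotonicity of $t\mapsto\|u(t)\|_\infty$ makes $\{t:\,\|u(t)\|_\infty>1\}$ an initial interval, so the shifted $t^\ast$ equals $t^\ast-t/2>0$ and the requirement $t/2<t^\ast-t/2$ is exactly $t<t^\ast$ --- then interpolate the \emph{evolved} norm $\|u(t/2)\|_2$ between $\LL^\infty$ and $\LL^{q_0}$ and close with a linear contraction iteration after a single Young step. Your exponent bookkeeping is right ($\alpha_{q_0}=N/(2q_0+N(m_2-1))$, $\beta_{q_0}=2q_0/(2q_0+N(m_2-1))$), and the condition $\varepsilon\,2^{\alpha_1}<1$ does make the iteration converge uniformly for $q_0\in[1,2]$, with the remainder $\varepsilon^{\ell+1}\|u_0\|_\infty\to0$. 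What your variant buys is a cleaner uniformity statement on $[1,2]$, since every constant comes from the Lemma at the single exponent $2$ plus Young constants depending continuously on exponents ranging in a compact set, at the price of invoking the restart/semigroup property explicitly (which the paper also uses freely in its shift arguments). For $q_0\ge2$ both you and the paper rely on the same inspection that the constant of Lemma \ref{lemma: q_0-t-ast} stays bounded away from the only degeneration, which occurs as $p_0\to1$, so there is no gap there either.
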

\begin{proof}    
First of all, let us show that \eqref{case-1-smooth-m2} holds down to $ q_0=1 $: note that we cannot simply let $ q_0 \to 1^+ $ since the multiplicative constant blows up (this is due to \eqref{la3.2} evaluated at $ p_k=q_0=1 $). Hence, to begin with, fix any $ q_0>1 $ and plug the interpolation inequality $ \|u_0 \|_{q_0} \le \| u_0 \|_{\infty^{\phantom{a}}}^{1-1/q_0} \, \| u_0 \|_1^{1/q_0} $ in \eqref{case-1-smooth-m2} to get 
\begin{equation}\label{case-1-smooth-m2-ext-proof-1}
\begin{aligned}
\left\| u(t) \right\|_{\infty} \leq K \left\| u_0 \right\|_{\infty^{\phantom{a}}}^{\frac{2(q_0-1)}{2q_0+N(m_2-1)}} \left( t^{-\frac{N}{2q_0+N(m_2-1)}} \, \Vert u_0 \Vert_{1}^{\frac{2}{2q_0+N(m_2-1)}} + \Vert u_0 \Vert_{\infty^{\phantom{a}}}^{\frac{N(m_2-1)(q_0-1)}{q_0[2q_0+N(m_2-1)]}} \, \Vert u_0 \Vert_{1}^{\frac{1}{q_0}} \right) .
\end{aligned}
\end{equation}
By means of the usual $t/2$-shift argument and the non-expansivity of the norms, from \eqref{case-1-smooth-m2-ext-proof-1} we deduce
\begin{equation}\label{case-1-smooth-m2-ext-proof-2}
\begin{aligned}
 \big\| u\big(t/2^k\big) \big\|_{\infty} \leq & 2^{\frac{N(k+1)}{2q_0+N(m_2-1)}} \, K \, \big\| u\big(t/2^{k+1}\big) \big\|_{\infty}^{\frac{2(q_0-1)}{2q_0+N(m_2-1)}}  \\
& \times \left( t^{-\frac{N}{2q_0+N(m_2-1)}} \, \Vert u_0 \Vert_{1}^{\frac{2}{2q_0+N(m_2-1)}} + \left\| u_0 \right\|_{\infty^{\phantom{a}}}^{\frac{N(m_2-1)(q_0-1)}{q_0[2q_0+N(m_2-1)]}} \, \Vert u_0 \Vert_{1}^{\frac{1}{q_0}} \right)
\end{aligned}
\end{equation}
for all $  k \in \mathbb{N} $. A straightforward iteration of \eqref{case-1-smooth-m2-ext-proof-2} yields 
\begin{equation}\label{case-1-smooth-m2-ext-proof-4}
\Vert u(t) \Vert_{\infty} \leq D \left( t^{-\frac{N}{2+N(m_2-1)}} \, \Vert u_0 \Vert_{1}^{\frac{2}{2+N(m_2-1)}} + \Vert u_0 \Vert_{\infty^{\phantom{a}}}^{\frac{N(m_2-1)(q_0-1)}{q_0[2+N(m_2-1)]}} \, \Vert u_0 \Vert_{1}^{\frac{2q_0+N(m_2-1)}{q_0[2+N(m_2-1)]}} \right) ,
\end{equation}
where $ D $ denotes again a generic positive constant (that we do not relabel below) depending only on the quantities $m_1,m_2,c_1,c_2,C_S,N,q_0 $. In order to remove $\| u_0 \|_{\infty} $ from the right-hand side of \eqref{case-1-smooth-m2-ext-proof-4}, one can proceed by combining Young's inequality with a $t/2$-shift argument similarly to the proof of Lemma \ref{lemma: q_0-t-ast}, so as to obtain
\begin{equation*}\label{case-1-smooth-m2-ext-proof-6}
\Vert u(t) \Vert_{\infty} \leq D \left( t^{-\frac{N}{2+N(m_2-1)}} \, \Vert u_0 \Vert_{1}^{\frac{2}{2+N(m_2-1)}} + \Vert u_0 \Vert_{1} \right) ,
\end{equation*} 
which is precisely \eqref{case-1-smooth-m2} in the case $ q_0=1 $. 

Finally, we are left with proving \eqref{case-1-smooth-m2-ext}, namely \eqref{case-1-smooth-m2} with a constant $ K $ independent of $q_0$. In fact it is straightforward to verify that all of the estimates provided along the proof of Lemma \ref{lemma: q_0-t-ast} depend continuously on $  q_0 \in (1,\infty) $ and are stable as $ q_0 \to \infty $. On the other hand, the argument we used here to extend the estimate to $ q_0=1 $ can be performed analogously for any $ q_0^\prime \in (1,q_0) $, thus providing the same estimate as \eqref{case-1-smooth-m2} with a constant $ K $ that stays bounded as $ q_0^\prime \to 1 $. 
\end{proof}  

In order to obtain estimate \eqref{thm01-smooth-est} and therefore complete the proof of Theorem \ref{thm01} in the case $ m_1>m_2 $, we need to bound $ t^\ast $ by quantities that only depend on the initial datum. 
   
\begin{proof}[Proof of Theorem \ref{thm01} ($ m_1>m_2 $)]   
Let $u_0 \in \LL^1(\Omega) \cap \LL^\infty(\Omega)$. In the case $ t^\ast=\infty $, from Corollary \ref{lemma: q_0-t-ast-ext} we deduce the validity of 
\begin{equation}\label{eq: est-thm01-1}
\Vert u(t) \Vert_{\infty} \leq K \left( t^{-\frac{N}{2q_0+N(m_2-1)}} \, \Vert u_0 \Vert_{q_0}^{\frac{2q_0}{2q_0+N(m_2-1)}} + \Vert u_0 \Vert_{q_0} \right) \quad \forall t > 0 \, .  
\end{equation}
Since $ m_1>m_2 $, it is straightforward to verify that \eqref{eq: est-thm01-1} implies \eqref{thm01-smooth-est}. 

Suppose now that $ t^\ast=0 $. This means that, by the definition of $ t^\ast $, we can assume with no loss of generality that $ \phi(u) $ is of porous medium type with $ m=m_1 $. In other words, we are allowed to replace \eqref{cond-phi-2}--\eqref{cond-phi-3} with
\begin{equation}  \label{cond-phi-23-mod}
c_1 \, |u|^{m_1-1}  \le \phi^\prime(u) \quad \forall u \in \mathbb{R} \, .
\end{equation}
One can then proceed exactly as in the proofs of Lemma \ref{lemma: q_0-t-ast} and Corollary \ref{lemma: q_0-t-ast-ext} (which in fact become simpler under \eqref{cond-phi-23-mod}) to get 
\begin{equation}\label{eq: est-thm01-2}
\Vert u(t) \Vert_{\infty} \leq K \left( t^{-\frac{N}{2q_0+N(m_1-1)}} \, \Vert u_0 \Vert_{q_0}^{\frac{2q_0}{2q_0+N(m_1-1)}} + \Vert u_0 \Vert_{q_0} \right) \quad \forall t > 0 \, .  
\end{equation} 
Again, the condition $ m_1>m_2 $ ensures that from \eqref{eq: est-thm01-2} there follows \eqref{thm01-smooth-est}. 

We are therefore left with the case $ t^\ast \in (0,\infty) $. First of all note that, by taking $ t^\ast $ as the new time origin and reasoning as in the case $ t^\ast=0 $ (by also using the non-expansivity of the norms), we obtain:
\begin{equation}\label{eq: est-thm01-tstar}
\Vert u(t) \Vert_{\infty} \leq K \left[ \left( t-t^\ast \right)^{-\frac{N}{2q_0+N(m_1-1)}} \, \Vert u_0 \Vert_{q_0}^{\frac{2q_0}{2q_0+N(m_1-1)}} + \Vert u_0 \Vert_{q_0} \right] \quad \forall t > t^\ast \, .  
\end{equation} 
Hence, by virtue of \eqref{case-1-smooth-m2-ext} and \eqref{eq: est-thm01-tstar}, we end up with
\begin{equation}\label{eq: est-thm01-tstar-do}
	\Vert u(t)\Vert_{\infty}\leq 
	\begin{cases}
	K \left( t^{-\frac{N}{2q_0+N(m_2-1)}} \, \Vert u_0\Vert_{q_0}^{\frac{2q_0}{2q_0+N(m_2-1)}} + \Vert u_0\Vert_{q_0} \right) & \forall t \in\left( 0 , t^\ast \right] , \\
	K \left[ \left( t-t^\ast \right)^{-\frac{N}{2q_0+N(m_1-1)}} \, \Vert u_0\Vert_{q_0}^{\frac{2q_0}{2q_0+N(m_1-1)}} + \Vert u_0\Vert_{q_0} \right] & \forall t > t^\ast \, .
\end{cases}
\end{equation}
By the definition of $ t^\ast $, and exploiting \eqref{eq: est-thm01-tstar-do} evaluated at $ t=t^\ast $, we can infer the inequality  
\begin{equation}\label{eq: est-thm01-tstar-est}
1 \le K \left( {t^\ast}^{-\frac{N}{2q_0+N(m_2-1)}} \Vert u_0\Vert_{q_0}^{\frac{2q_0}{2q_0+N(m_2-1)}} + \Vert u_0\Vert_{q_0} \right) .
\end{equation}
Let us first suppose 
\begin{equation}\label{eq: est-thm01-tstar-q0}
\| u_0 \|_{q_0} \le \frac{1}{2K} \, , 
\end{equation}
so that \eqref{eq: est-thm01-tstar-est} yields 
\begin{equation}\label{eq: est-thm01-tstar-est-1}
t^\ast \le K^\prime \left\| u_0 \right\|_{q_0}^{\frac{2q_0}{N}} \, , \quad K^\prime := \left( 2K \right)^{\frac{2q_0+N(m_2-1)}{N}} \, .
\end{equation}   
On the other hand, the lower branch of \eqref{eq: est-thm01-tstar-do} implies the validity of 
\begin{equation*}
\Vert u(t)\Vert_{\infty} \leq \underbrace{2^{\frac{N}{2q_0+N(m_1-1)}} \, K}_{K^{\prime\prime}} \left( t^{-\frac{N}{2q_0+N(m_1-1)}} \, \Vert u_0 \Vert_{q_0}^{\frac{2q_0}{2q_0+N(m_1-1)}} + \Vert u_0\Vert_{q_0} \right) \quad \forall t \geq 2t^\ast \, ,
\end{equation*}   
which in turn implies
\begin{equation}\label{eq: est-thm01-tstar-est-2}
\Vert u(t)\Vert_{\infty} \leq K^{\prime\prime} \left( t^{-\frac{N}{2q_0+N(m_1-1)}} \, \Vert u_0 \Vert_{q_0}^{\frac{2q_0}{2q_0+N(m_1-1)}} + \Vert u_0\Vert_{q_0} \right) \quad \forall t \geq 2 K^\prime \left\| u_0 \right\|_{q_0}^{\frac{2q_0}{N}}
\end{equation}   
in view of \eqref{eq: est-thm01-tstar-est-1}. So, by exploiting the upper branch of \eqref{eq: est-thm01-tstar-do}, \eqref{eq: est-thm01-tstar-est-2}, recalling the definition of $ t^\ast $ and the non-expansivity of the norms, we deduce
\begin{equation}\label{eq: est-thm01-tstar-tri}
	\Vert u(t)\Vert_{\infty}\leq 
	\begin{cases}
	K \left( t^{-\frac{N}{2q_0+N(m_2-1)}} \Vert u_0\Vert_{q_0}^{\frac{2q_0}{2q_0+N(m_2-1)}} + \Vert u_0\Vert_{q_0} \right) & \forall t \in\left( 0 , t^\ast \right] , \\
	1 & \forall t \in \left( t^\ast , 2 K^\prime \left\| u_0 \right\|_{q_0}^{\frac{2q_0}{N}} \right) , \\
	K^{\prime\prime} \left( t^{-\frac{N}{2q_0+N(m_1-1)}} \ \Vert u_0\Vert_{q_0}^{\frac{2q_0}{2q_0+N(m_1-1)}} + \Vert u_0\Vert_{q_0} \right) & \forall t \ge 2 K^\prime \left\| u_0 \right\|_{q_0}^{\frac{2q_0}{N}} .
\end{cases}
\end{equation}
A straightforward computation shows that \eqref{eq: est-thm01-tstar-tri} is implied by 
\begin{equation}\label{eq: est-thm01-tstar-quater}
	\Vert u(t)\Vert_{\infty} \leq 
	\begin{cases}
	K^{\prime\prime\prime} \left( t^{-\frac{N}{2q_0+N(m_2-1)}} \left\Vert u_0 \right\Vert_{q_0}^{\frac{2q_0}{2q_0+N(m_2-1)}} + \Vert u_0\Vert_{q_0} \right) &  \forall t \in \left( 0 , 2 K^\prime \left\| u_0 \right\|_{q_0}^{\frac{2q_0}{N}} \right) , \\
	K^{\prime\prime} \left( t^{-\frac{N}{2q_0+N(m_1-1)}} \left\Vert u_0 \right\Vert_{q_0}^{\frac{2q_0}{2q_0+N(m_1-1)}} + \Vert u_0\Vert_{q_0} \right) & \forall t \ge 2 K^\prime \left\| u_0 \right\|_{q_0}^{\frac{2q_0}{N}} ,
\end{cases}
\end{equation}
up to choosing 
$$ K^{\prime\prime\prime}:=2^{\frac{2q_0+Nm_2}{2q_0+N(m_2-1)}} \, K \, . $$ 
It is then easy to check that from \eqref{eq: est-thm01-tstar-quater} estimate \eqref{thm01-smooth-est} follows provided one relabels the multiplicative constant $K$.

Let us now suppose that
\begin{equation}\label{eq: est-thm01-tstar-q0-opp}
\| u_0 \|_{q_0} > \frac{1}{2K}
\end{equation}   
instead. In this case we can assume, with no loss of generality, the validity of
\begin{equation}\label{eq: est-thm01-tstar-est-3}
t^\ast >  K^\prime \left\| u_0 \right\|_{q_0}^{\frac{2q_0}{N}} \, . 
\end{equation}
Indeed, if \eqref{eq: est-thm01-tstar-est-1} holds then the above argument leads to \eqref{eq: est-thm01-tstar-quater}, since we only used \eqref{eq: est-thm01-tstar-q0} to get \eqref{eq: est-thm01-tstar-est-1}. Hence, under \eqref{eq: est-thm01-tstar-est-3}, we can easily deduce the analogue of \eqref{eq: est-thm01-tstar-tri}:
\begin{equation}\label{eq: est-thm01-tstar-sic}
	\Vert u(t)\Vert_{\infty}\leq 
	\begin{cases}
	K \left( t^{-\frac{N}{2q_0+N(m_2-1)}} \, \Vert u_0\Vert_{q_0}^{\frac{2q_0}{2q_0+N(m_2-1)}} + \Vert u_0\Vert_{q_0} \right) & \forall t \in\left( 0 , K^\prime \left\| u_0 \right\|_{q_0}^{\frac{2q_0}{N}} \right] , \\
	1 & \forall t \in \left( K^\prime \left\| u_0 \right\|_{q_0}^{\frac{2q_0}{N}} , 2t^\ast \right) , \\
	K^{\prime\prime} \left( t^{-\frac{N}{2q_0+N(m_1-1)}} \, \Vert u_0\Vert_{q_0}^{\frac{2q_0}{2q_0+N(m_1-1)}} + \Vert u_0\Vert_{q_0} \right) & \forall t \ge 2 t^\ast \, .
\end{cases}
\end{equation}
In order to remove the presence of $ t^\ast $ in \eqref{eq: est-thm01-tstar-sic}, let us show that the lower branch can be extended down to $ t = K^\prime \left\| u_0 \right\|_{q_0}^{{2q_0}/{N}} $. In fact, by evaluating the upper branch at such time and using the non-expansivity of the norms, we get:
\begin{equation}\label{eq: est-thm01-tstar-ses}
\Vert u(t)\Vert_{\infty} \leq \frac{1}{2} +  K\,\Vert u_0\Vert_{q_0}  \quad \forall t \in \left( K^\prime \left\| u_0 \right\|_{q_0}^{\frac{2q_0}{N}} , 2t^\ast \right) .
\end{equation} 
On the other hand, in view of \eqref{eq: est-thm01-tstar-q0-opp}, estimate \eqref{eq: est-thm01-tstar-ses} implies
\begin{equation*}\label{eq: est-thm01-tstar-set}
\Vert u(t)\Vert_{\infty} \leq 2 K\,\Vert u_0\Vert_{q_0}  \quad \forall t \in \left( K^\prime \left\| u_0 \right\|_{q_0}^{\frac{2q_0}{N}} , 2t^\ast \right) ,
\end{equation*}
whence
\begin{equation}\label{eq: est-thm01-tstar-last}
	\Vert u(t)\Vert_{\infty}\leq 
	\begin{cases}
	K \left( t^{-\frac{N}{2q_0+N(m_2-1)}} \, \Vert u_0\Vert_{q_0}^{\frac{2q_0}{2q_0+N(m_2-1)}} + \Vert u_0\Vert_{q_0} \right) & \forall t \in\left( 0 , K^\prime \left\| u_0 \right\|_{q_0}^{\frac{2q_0}{N}} \right) , \\
	K^{\prime\prime\prime} \left( t^{-\frac{N}{2q_0+N(m_1-1)}} \, \Vert u_0\Vert_{q_0}^{\frac{2q_0}{2q_0+N(m_1-1)}} + \Vert u_0\Vert_{q_0} \right) & \forall t \ge K^\prime \left\| u_0 \right\|_{q_0}^{\frac{2q_0}{N}} ,
\end{cases}
\end{equation}
with $ K^{\prime\prime\prime}:= 2K \vee K^{\prime\prime} $. From \eqref{eq: est-thm01-tstar-last} the validity of \eqref{thm01-smooth-est} is immediate upon relabelling the multiplicative constant $K$. 

Finally, one can drop the assumption $ u_0 \in \LL^\infty(\Omega) $ in a standard way since the right-hand side of estimate \eqref{thm01-smooth-est} does not depend on $ \| u_0 \|_\infty $, see for instance the proof of \cite[Theorem~3.2]{GM13}. 
\end{proof}    
    
Let us now deal with the case $ m_1 \le m_2 $, for which the analysis is much simpler because $ \phi^\prime(u) $ can be bounded from below by a single power.

\begin{proof}[Proof of Theorem \ref{thm01} ($m_1 \le m_2$)]
If $ m_1 $ is smaller than or equal to $ m_2 $, it is plain that in view of \eqref{cond-phi-2}--\eqref{cond-phi-3} we have a double global lower bound on $ \phi^\prime $: 
\begin{equation}\label{cond-phi-lower-1}
\left( c_1 \wedge c_2 \right) |u|^{m_1-1}  \le \phi^\prime(u) \quad \forall u \in \mathbb{R} \, , 
\end{equation}
\begin{equation}\label{cond-phi-lower-2}
\left( c_1 \wedge c_2 \right) |u|^{m_2-1}  \le \phi^\prime(u) \quad \forall u \in \mathbb{R} \, .
\end{equation}
This means that we can proceed exactly as in the proofs of Lemma \ref{lemma: q_0-t-ast} and Corollary \ref{lemma: q_0-t-ast-ext} (with simplifications actually, since we can reason as if $ m_1=m_2=m $) to get a double smoothing estimate, with no need for introducing $ t^\ast $:
\begin{equation*}\label{case-1-smooth-1}
\Vert u(t)\Vert_{\infty} \leq K \left( t^{-\frac{N}{2q_0+N(m_1-1)}} \, \Vert u_0 \Vert_{q_0}^{\frac{2q_0}{2q_0+N(m_1-1)}} + \Vert u_0 \Vert_{q_0} \right) \quad \forall t>0 \, ,
\end{equation*}
\begin{equation*}\label{case-1-smooth-2}
\Vert u(t)\Vert_{\infty} \leq K\left( t^{-\frac{N}{2q_0+N(m_2-1)}} \, \Vert u_0 \Vert_{q_0}^{\frac{2q_0}{2q_0+N(m_2-1)}} + \Vert u_0 \Vert_{q_0} \right) \quad \forall t>0 \, .
\end{equation*}
In particular there holds
\begin{equation}\label{case-1-smooth-min}
\begin{aligned}
\Vert u(t) \Vert_{\infty}
\leq K \left[ \left( t^{-\frac{N}{2q_0+N(m_1-1)}} \, \Vert u_0 \Vert_{q_0}^{\frac{2q_0}{2q_0+N(m_1-1)}} \right) \wedge \left( t^{-\frac{N}{2q_0+N(m_2-1)}} \, \Vert u_0 \Vert_{q_0}^{\frac{2q_0}{2q_0+N(m_2-1)}} \right) + \Vert u_0 \Vert_{q_0} \right]
\end{aligned}
\end{equation}
for all $ t>0 $, and it is immediate to check that \eqref{case-1-smooth-min} is equivalent to \eqref{thm01-smooth-est} up to a different $K$.
\end{proof}     
                  
\section{Long-time estimates: proofs}\label{sect: long} 

In this section we shall complete our investigation by (mostly) addressing the long-time behaviour of solutions to \eqref{pb}, under the additional assumption that $ \Omega $ is of finite measure.
To our ends the following proposition, which also has an independent interest, is crucial.

\begin{pro}[A nonlinear Poincar\'e inequality]\label{lem: dis-zeromean}
Let $ \Omega \subset \mathbb{R}^N $ be a domain of finite measure, such that the Poincar\'e inequality \eqref{dis-poin} holds. For each $ l \ge 1/2 $, let $ \Phi_l $ be any continuous function on $ \mathbb{R} $ satisfying
\begin{equation}\label{ineq-nonlin}
\left( {C}_0 \, |y| \right)^l \le \left| \Phi_l(y) \right| \le \left( {C}_1 \, |y| \right)^l \quad \forall y \in \mathbb{R} \, ,
\end{equation} 
\begin{equation}\label{ineq-nonlin-2}
y \, \Phi_l(y) > 0 \quad \forall y \neq 0 \, ,
\end{equation} 
for some positive constants $ C_0 \le C_1 $ independent of $l$. Then there exists a positive constant $ C_P^\ast $, depending only on $ \Omega ,C_1 , C_2$, such that the nonlinear Poincar\'e inequality
\begin{equation}\label{ineq-nonlin-ineq}
\left\| \Phi_l(\xi) \right\|_2 \le C_P^\ast \left\| \nabla \Phi_l(\xi) \right\|_2 \quad \forall \xi \in \LL^1(\Omega): \, \Phi_l(\xi) \in H^1(\Omega) \, , \ \, \overline{\xi}=0 
\end{equation}
holds.
\end{pro}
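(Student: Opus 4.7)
The plan is to upgrade the linear Poincar\'e inequality \eqref{dis-poin} to the nonlinear statement \eqref{ineq-nonlin-ineq} by controlling the mean of $f:=\Phi_l(\xi)$. Applying \eqref{dis-poin} to $f\in H^1(\Omega)$ and using the orthogonal decomposition
\[
\|f\|_2^2 \,=\, \|f-\overline{f}\|_2^2 + |\overline{f}|^2\,|\Omega| \,\le\, C_P^2\,\|\nabla f\|_2^2 + |\overline{f}|^2\,|\Omega|,
\]
it is enough to prove an absorbable bound of the form $|\overline{f}|^2|\Omega| \le \alpha\,\|f\|_2^2 + \beta\,\|\nabla f\|_2^2$ with $\alpha<1$ and $\alpha,\beta$ depending only on $\Omega,C_0,C_1$; then \eqref{ineq-nonlin-ineq} follows with $C_P^\ast:=\sqrt{(C_P^2+\beta)/(1-\alpha)}$.

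I would establish this absorbable bound by a blow-up/contradiction argument. If it fails, one can produce sequences $l_n\ge 1/2$, admissible $\Phi_{l_n}$, and $\xi_n\in\LL^1(\Omega)$ with $\overline{\xi_n}=0$ such that, setting $f_n:=\Phi_{l_n}(\xi_n)$ and normalizing $F_n:=f_n/\|f_n\|_2$, we get $\|\nabla F_n\|_2\to 0$. By \eqref{dis-poin}, $\|F_n-\overline{F_n}\|_2\to 0$; since $\|F_n\|_2=1$ this forces $|\overline{F_n}|\to|\Omega|^{-1/2}$. Passing to subsequences I may assume $F_n\to c$ both in $\LL^2(\Omega)$ and a.e., with $|c|=|\Omega|^{-1/2}$ (WLOG $c>0$), and $l_n\to l_\infty\in[1/2,\infty]$. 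The sign condition \eqref{ineq-nonlin-2} then gives $\operatorname{sign}(\xi_n)=\operatorname{sign}(f_n)=\operatorname{sign}(F_n)$; by Egorov there is a set $E\subset\Omega$ of almost full measure on which $F_n\to c$ uniformly, whence $f_n\ge(c/2)\|f_n\|_2$ on $E$ for large $n$, and the lower bound in \eqref{ineq-nonlin} produces a strictly positive constant $\kappa$ with $\xi_n\ge\kappa$ on $E$ eventually (the quantity $(\|f_n\|_2 c/2)^{1/l_n}$ stays bounded below thanks to $c>0$ and $l_n\ge 1/2$).

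The contradiction is extracted from $\overline{\xi_n}=0$. The upper bound in \eqref{ineq-nonlin} combined with the elementary estimate $|y|^{1/l}\le 1+|y|^2$, valid precisely for $l\ge 1/2$, yields $|\xi_n|\le C_0^{-1}(1+f_n^2)$; since $\{f_n^2\}$ (equivalently $\{F_n^2\}$, up to the normalization) converges in $\LL^1(\Omega)$, the family $\{\xi_n\}$ is uniformly integrable. Choosing $|\Omega\setminus E|$ small enough to ensure $|\int_{\Omega\setminus E}\xi_n|<\kappa|E|/2$ for all $n$ and $|E|>|\Omega|/2$ then forces $\int_\Omega\xi_n=\int_E\xi_n+\int_{\Omega\setminus E}\xi_n>\kappa|\Omega|/4>0$ for large $n$, contradicting $\overline{\xi_n}=0$.

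The main delicacy throughout is the uniformity of $C_P^\ast$ in $l\in[1/2,\infty)$: the constant must not degenerate at either endpoint of this interval. This is what forces extracting a subsequence of $l_n$ and exploiting the sharp pointwise inequality $|y|^{1/l}\le 1+|y|^2$ — true precisely because $1/l\le 2$ — which simultaneously underpins the uniform integrability of $\{\xi_n\}$ and the positivity of $(c/2)^{1/l_n}$ in the limit. A secondary subtle point is that the normalization $F_n=f_n/\|f_n\|_2$ is used only as an auxiliary convergent object (never as a new admissible $\Phi_{l_n}(\text{something})$), so the hypotheses \eqref{ineq-nonlin}--\eqref{ineq-nonlin-2} on the original $\xi_n$ remain unaltered. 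Notably, no Rellich-type compactness of $H^1(\Omega)\hookrightarrow\LL^2(\Omega)$ is ever invoked; the only quantitative input beyond elementary measure theory is \eqref{dis-poin} itself, which is exactly in line with the finite-measure hypothesis on $\Omega$.
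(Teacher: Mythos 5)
Your strategy is essentially the paper's: negate the inequality, normalize $F_n=f_n/\|f_n\|_2$, use \eqref{dis-poin} to force $F_n\to c$ with $|c|=|\Omega|^{-1/2}$, and contradict $\overline{\xi}_n=0$. But the execution has a genuine gap: nothing in the contradiction hypothesis controls $a_n:=\|f_n\|_2$, and two of your uniform-in-$n$ claims silently require such control. First, the constant $\kappa$: from $f_n\ge (c/2)\,a_n$ on $E$ and the \emph{upper} bound in \eqref{ineq-nonlin} (it is $|\Phi_l(y)|\le (C_1|y|)^l$, not the lower bound, that converts a lower bound on $f_n$ into a lower bound on $\xi_n$; your labels are swapped in both places, though the constants you display are the right ones) you get $\xi_n\ge C_1^{-1}\left((c/2)\,a_n\right)^{1/l_n}$ on $E$, and this is \emph{not} bounded below independently of $n$ merely because $c>0$ and $l_n\ge 1/2$: if $l_n$ stays bounded and $a_n\to 0$ (or $a_n\to 0$ fast enough even with $l_n\to\infty$), it tends to $0$. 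Second, the uniform integrability of $\{\xi_n\}$: the bound $|\xi_n|\le C_0^{-1}(1+f_n^2)$ is correct, but $f_n^2=a_n^2F_n^2$ is not ``equivalently $F_n^2$ up to the normalization'': if $a_n\to\infty$, $\{f_n^2\}$ need not even be bounded in $\LL^1(\Omega)$, so the choice of a single Egorov set $E$ with $\sup_n\big|\int_{\Omega\setminus E}\xi_n\big|$ small is unjustified. The contradiction sequence really can have $a_n\to 0$ or $a_n\to\infty$: for the pure power $\Phi_l(y)=|y|^{l-1}y$ the ratio $\|\nabla f\|_2/\|f\|_2$ is invariant under $\xi\mapsto\lambda\xi$ while $\|f\|_2$ scales like $\lambda^l$, so no normalization of $a_n$ comes for free.

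The repair is exactly the paper's normalization, which your argument is missing: since the zero-mean condition is invariant under multiplication by positive constants, replace $\xi_n$ by $\mathcal{Z}_n:=a_n^{-1/l_n}\xi_n$ (still $\int_\Omega\mathcal{Z}_n=0$, same signs). The power structure of \eqref{ineq-nonlin} then yields the sandwich $C_1^{-1}|F_n|^{1/l_n}\le|\mathcal{Z}_n|\le C_0^{-1}|F_n|^{1/l_n}$ in terms of the normalized $F_n$ alone, so the lower bound on the good set becomes $C_1^{-1}(c/2)^{1/l_n}\ge C_1^{-1}\min\{1,(c/2)^2\}>0$ uniformly in $n$, and the integrability bound becomes $|\mathcal{Z}_n|\le C_0^{-1}(1+F_n^2)$ with $\{F_n^2\}$ convergent in $\LL^1(\Omega)$, hence uniformly integrable. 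With these two replacements your scheme closes; the paper reaches the same contradiction slightly differently (no Egorov: Fatou on $\mathcal{Z}_n$ plus a H\"older estimate of $\int_{E_n}|\mathcal{Z}_n|$ over $E_n:=\{\mathcal{Z}_n<0\}$, whose measure tends to zero), but that difference is cosmetic. Your observations that only \eqref{dis-poin} is needed and that no Rellich compactness is invoked do match the paper.
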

\begin{proof}
It is a consequence of the methods of proof of \cite[Lemma 5.9]{GM13} and \cite[Lemma 5.6]{GMP13}, hence we shall be concise and only point out the main differences (the present result is stated under more general assumptions).

If, by contradiction, the assertion is false, then there exist a sequence of numbers $ \{ l_n \} \subset [1/2,\infty) $ and a sequence of nontrivial functions $ \{ \xi_n \} \in \LL^1(\Omega) $ with $ \Phi_{l_n}(\xi_n) \in H^1(\Omega) $, $ \overline{\xi}_n=0 $, such that 
\begin{equation}\label{nonlin-proof-1}
\left\| \nabla \Phi_{l_{n}}(\xi_n) \right\|_2 \le \frac1n \left\| \Phi_{l_n}(\xi_n) \right\|_2 \quad \forall n \in \mathbb{N} \, .
\end{equation}
By setting $ a_n:=\left\| \Phi_{l_n}(\xi_n) \right\|_2 $, $ \Psi_n:=\Phi_{l_n}(\xi_n)/a_n $ and applying the Poincar\'e inequality \eqref{dis-poin} to the function $ f=\Psi_n $, together with \eqref{nonlin-proof-1}, it is straightforward to deduce that the sequence $ \{ \Psi_n \} $ converges in $ \LL^2(\Omega) $ to a constant $ c_0 \neq 0 $. We can assume with no loss of generality that $c_0>0$: in case $ c_0<0 $ one argues likewise in view of \eqref{ineq-nonlin-2}. 
Let us then set
$$ \mathcal{Z}_n := a_n^{-\frac{1}{l_n}} \, {\xi_n} \quad \forall n \in \mathbb{N} \, . $$
Thanks to \eqref{ineq-nonlin}--\eqref{ineq-nonlin-2}, there hold
\begin{equation}\label{nonlin-proof-2}
\left| \mathcal{Z}_n \right| \le \frac{1}{C_0} \left| \Psi_n \right|^{\frac{1}{l_n}} \quad \forall n \in \mathbb{N}
\end{equation}
and 
\begin{equation}\label{nonlin-proof-3}
\liminf_{n \to \infty} \mathcal{Z}_n \ge \ell > 0 \, , \quad \ell:= \frac{\liminf_{n \to \infty} c_0^{\frac{1}{l_n}}}{C_1}  \, . 
\end{equation}
In particular,
\begin{equation}\label{nonlin-proof-4}
\lim_{n \to \infty} \left| E_n \right| = 0 \, , \quad E_n:=\left\{ x \in \Omega : \, \mathcal{Z}_n(x) < 0 \right\} .
\end{equation}
Moreover, estimate \eqref{nonlin-proof-2} and H\"{o}lder's inequality yield
\begin{equation}\label{nonlin-proof-5}
\int_{E_n} \left| \mathcal{Z}_n(x) \right| dx \le \frac{1}{C_0} \int_{E_n} \left| \Psi_n(x) \right|^{\frac{1}{l_n}} dx \le \frac{\left| E_n \right|^{1-\frac{1}{2l_n}}}{C_0} \left\| \Psi_n \right\|_{\LL^2(E_n)}^{\frac{1}{l_n}} .
\end{equation}
Upon recalling that $ \{ l_n \} \subset [1/2,\infty) $ and that $ \{ \Psi_n \} $ converges in $ \LL^2(\Omega) $, from \eqref{nonlin-proof-3}--\eqref{nonlin-proof-5} we infer
$$ 0 = \lim_{n \to \infty} \int_{\Omega} \mathcal{Z}_n(x) \, dx = \lim_{n \to \infty} \int_{\Omega \setminus E_n} \mathcal{Z}_n(x) \, dx \ge \ell \left| \Omega \right| , $$
a contradiction.
\end{proof}

\begin{cor}\label{cor: dis-zeromean}
Let the hypotheses of Proposition \ref{lem: dis-zeromean} be fulfilled, and suppose in addition that the Gagliardo-Nirenberg-Sobolev inequalities \eqref{NGN} hold for all $ r,s $ complying with \eqref{NGN-parameters-r-s-1}--\eqref{NGN-parameters-r-s-2} and $ \vartheta=\vartheta(s,r,N) $ as in \eqref{NGN-parameters-theta}. Then the nonlinear Gagliardo-Nirenberg-Sobolev inequalities
\begin{equation*}\label{ineq-nonlin-GN}
\left\| \Phi_l(\xi) \right\|_r \le C_S^\ast \left\| \nabla \Phi_l(\xi) \right\|_2^{\vartheta} \left\| \Phi_l(\xi) \right\|_s^{1-\vartheta} \quad \forall \xi \in \LL^1(\Omega): \, \Phi_l(\xi) \in H^1(\Omega) \cap \LL^s(\Omega) \, , \ \, \overline{\xi}=0 
\end{equation*}
hold for a positive constant $ C_S^\ast $ depending on $ \Omega ,C_1,C_2$ and independent of $ l \ge 1/2 $ and $ r,s $ ranging in compact subsets of $(0,\infty)$. 
\end{cor}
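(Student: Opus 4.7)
The plan is to reduce the statement to a direct combination of the linear Gagliardo-Nirenberg-Sobolev inequality \eqref{NGN} and the nonlinear Poincar\'e inequality \eqref{ineq-nonlin-ineq} just established in Proposition \ref{lem: dis-zeromean}. The crucial observation is that the target estimate differs from \eqref{NGN} only in that the extra $ \| f \|_2 $ contribution inside the bracket is missing; under the zero-mean hypothesis on $ \xi $, this term can be absorbed into $ \| \nabla \Phi_l(\xi) \|_2 $ via the nonlinear Poincar\'e inequality.

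More concretely, since $ \Phi_l(\xi) \in H^1(\Omega) \cap \LL^s(\Omega) $ by hypothesis, I would first apply \eqref{NGN} with $ f = \Phi_l(\xi) $, obtaining
\begin{equation*}
\left\| \Phi_l(\xi) \right\|_r \le C_S \left( \left\| \nabla \Phi_l(\xi) \right\|_2 + \left\| \Phi_l(\xi) \right\|_2 \right)^{\vartheta} \left\| \Phi_l(\xi) \right\|_s^{1-\vartheta} .
\end{equation*}
Then, using that $ \overline{\xi} = 0 $ and that $ \Phi_l(\xi) \in H^1(\Omega) $ (so in particular $ \Phi_l(\xi) \in \LL^2(\Omega) $, which is what makes Proposition \ref{lem: dis-zeromean} applicable), I would invoke the nonlinear Poincar\'e inequality to bound $ \| \Phi_l(\xi) \|_2 \le C_P^\ast \, \| \nabla \Phi_l(\xi) \|_2 $. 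Substituting this into the parenthesis above yields
\begin{equation*}
\left\| \Phi_l(\xi) \right\|_r \le C_S \, (1 + C_P^\ast)^{\vartheta} \left\| \nabla \Phi_l(\xi) \right\|_2^{\vartheta} \left\| \Phi_l(\xi) \right\|_s^{1-\vartheta} ,
\end{equation*}
so that setting $ C_S^\ast := C_S \, (1 + C_P^\ast) $ (which is an upper bound for $ C_S \, (1+C_P^\ast)^{\vartheta} $ since $ \vartheta \in [0,1] $) delivers exactly the claimed inequality.

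As for the uniformity of the constant, I would note that $ C_P^\ast $ depends only on $ \Omega $ and on the constants appearing in \eqref{ineq-nonlin}, and is in particular independent of $ l \ge 1/2 $, while the standing hypothesis following \eqref{NGN-parameters-r-s-2} guarantees that $ C_S $ stays bounded when $ r, s $ range in compact subsets of $ (0,\infty) $. Combined with $ \vartheta = \vartheta(s,r,N) \in [0,1] $, this gives exactly the required uniformity of $ C_S^\ast $ in $ l $ and in $ (r,s) $. I do not foresee any genuine obstacle here: all the substantive analytic work was contained in Proposition \ref{lem: dis-zeromean}, and the corollary is essentially a formal consequence obtained by plugging that nonlinear Poincar\'e inequality into the already available linear Gagliardo-Nirenberg-Sobolev family.
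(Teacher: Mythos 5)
Your proposal is correct and follows essentially the same argument as the paper: apply \eqref{NGN} to $f=\Phi_l(\xi)$ and absorb the $\LL^2$ term via the nonlinear Poincar\'e inequality \eqref{ineq-nonlin-ineq}, with the constant $C_S\,(1+C_P^\ast)^{\vartheta}$ (which you harmlessly enlarge to $C_S\,(1+C_P^\ast)$) and the same observations on uniformity in $l$ and $(r,s)$.
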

\begin{proof}
It is enough to apply \eqref{NGN} to $f=\Phi_l(\xi)$ and combine it with \eqref{ineq-nonlin-ineq}: 
\[
\left\| \Phi_l(\xi) \right\|_r \le  C_S \left( \left\| \nabla{\Phi_l(\xi)} \right\|_2 + \left\| \Phi_l(\xi) \right\|_2 \right)^{\vartheta} \left\| \Phi_l(\xi) \right\|_s^{1-\vartheta} 
\le \underbrace{C_S \left( 1 + C_P^\ast \right)^{\vartheta}}_{C_S^\ast} \left\| \nabla{\Phi_l(\xi)} \right\|_2 ^{\vartheta} \left\| \Phi_l(\xi) \right\|_s^{1-\vartheta} \, .
\]
\end{proof}

In fact, as mentioned in the Introduction, when the measure of the domain is finite the Poincar\'e inequality comes as a \emph{consequence} of the validity of Gagliardo-Nirenberg-Sobolev inequalities.  

\begin{pro}[Finiteness of the measure and Poincar\'e]\label{pro-imp}
Let $ \Omega \subset \mathbb{R}^N $ be a domain that supports the Gagliardo-Nirenberg-Sobolev inequalities \eqref{NGN}. Suppose in addition that $ |\Omega|<\infty $. Then the Poincar\'e inequality \eqref{dis-poin} holds.
\begin{proof}
We shall prove that, under the running assumptions, the embedding of $ H^1(\Omega) $ into $ \LL^2(\Omega) $ is compact, the result being a direct consequence of such property. To this end, let $ u_n $ be any bounded sequence in $ H^1(\Omega) $. Thanks to \eqref{NGN} and to the finiteness of the measure, we know that there exist $ r>2 $ and $ M<\infty $ such that $ \sup_{n \in \mathbb{N}} \| u_n \|_{\LL^r(\Omega)} \le M $. Let $ \Omega_k $ be a sequence of bounded regular domains satisfying $ \Omega_k \subset \Omega_{k+1} $ and $ \Omega = \bigcup_k \Omega_k $. Because $ |\Omega|<\infty $, we have that $  \lim_{k \to \infty} | \Omega \setminus \Omega_k | = 0 $. Up to a subsequence that we do not relabel, $ u_n $ converges weakly in $ H^1(\Omega) $ (and in particular in $ \LL^r(\Omega) $) to some function $ u $. In view of Rellich's Theorem, there holds
$$ \lim_{n \to \infty } \| u_n - u \|_{\LL^2(\Omega_k)} = 0 \quad \forall k \in \mathbb{N} \, , $$
so that 
\begin{equation}\label{eq: ineq-final}
\begin{aligned}
\limsup_{n \to \infty} \| u_n - u \|_{\LL^{2}(\Omega)} \le & \limsup_{n \to \infty} \| u_n - u \|_{\LL^{2}(\Omega_k)} + \limsup_{n \to \infty} \| u_n - u \|_{\LL^{2}(\Omega \setminus \Omega_k )} \\
\le & | \Omega\setminus\Omega_k |^{\frac{1}{2}-\frac{1}{r}} \left( M + \| u \|_{\LL^r(\Omega)} \right) .
\end{aligned}
\end{equation}
By letting $ k \to \infty $ in \eqref{eq: ineq-final} we deduce that $ u_n $ converges strongly to $ u $ in $ \LL^2(\Omega) $, whence the assertion.
\end{proof}
\end{pro}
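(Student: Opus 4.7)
The plan is to deduce the Poincaré inequality \eqref{dis-poin} from a standard compactness argument: once one knows that the embedding $H^1(\Omega)\hookrightarrow \LL^2(\Omega)$ is compact, \eqref{dis-poin} follows by the classical contradiction/normalization argument (if no $C_P$ worked, a normalized sequence of zero-mean functions with vanishing Dirichlet energy would converge, along a subsequence, to a constant of unit $\LL^2$-norm and zero mean, which is impossible since $|\Omega|<\infty$). So the whole game is to prove compactness of $H^1(\Omega)\hookrightarrow \LL^2(\Omega)$ under our hypotheses.

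To establish compactness, I would take any bounded sequence $\{u_n\}\subset H^1(\Omega)$ and exploit the Gagliardo-Nirenberg-Sobolev inequality \eqref{NGN} to upgrade boundedness in $H^1$ to boundedness in $\LL^r(\Omega)$ for some $r>2$ (e.g.\ $r=2^\star$ when $N\ge 3$, or any $r>2$ in low dimensions, thanks to \eqref{NGN-parameters-r-s-1}--\eqref{NGN-parameters-r-s-2}). After extracting a subsequence (not relabelled), one has $u_n \rightharpoonup u$ weakly in $H^1(\Omega)$ and in $\LL^r(\Omega)$, with $u\in \LL^r(\Omega)$. Then I would exhaust $\Omega$ by a nested sequence of bounded, regular subdomains $\Omega_k\Subset\Omega_{k+1}$ with $\Omega=\bigcup_k \Omega_k$, which is always possible for a general Euclidean domain; because $|\Omega|<\infty$, one has $|\Omega\setminus\Omega_k|\to 0$.

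The core of the argument is then a two-region split: on the "good'' part $\Omega_k$, the classical Rellich-Kondrachov Theorem applies (since $\Omega_k$ is bounded and regular), giving $u_n\to u$ strongly in $\LL^2(\Omega_k)$ for each fixed $k$; on the "tail'' $\Omega\setminus\Omega_k$, I would use Hölder's inequality together with the uniform $\LL^r$-bound to obtain
\[
\|u_n-u\|_{\LL^2(\Omega\setminus\Omega_k)}\le |\Omega\setminus\Omega_k|^{\frac{1}{2}-\frac{1}{r}}\left(\sup_n \|u_n\|_r + \|u\|_r\right).
\]
Letting first $n\to\infty$ (to kill the first region) and then $k\to\infty$ (to kill the tail, using $|\Omega\setminus\Omega_k|\to 0$ and $r>2$), we conclude $u_n\to u$ in $\LL^2(\Omega)$. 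Hence $H^1(\Omega)\hookrightarrow \LL^2(\Omega)$ is compact, and \eqref{dis-poin} follows.

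The main obstacle I anticipate is precisely the lack of a global extension/regularity assumption on $\Omega$, which prevents us from invoking Rellich-Kondrachov directly on $\Omega$. The trick that resolves it is the combination of the two ingredients in the hypotheses: the GNS inequality yields the extra integrability (some $\LL^r$ with $r>2$) needed to make the Hölder estimate on the tail useful, while the finiteness of $|\Omega|$ forces $|\Omega\setminus\Omega_k|\to 0$ along the exhaustion, so that the tail contribution disappears. Once these two features are combined, the classical Rellich-Kondrachov theorem on the regular subdomains $\Omega_k$ suffices, and neither a global extension property nor regularity of $\partial\Omega$ is needed.
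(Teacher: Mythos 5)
Your proposal is correct and follows essentially the same route as the paper's proof: upgrade the $H^1$ bound to a uniform $\LL^r$ bound with $r>2$ via \eqref{NGN}, exhaust $\Omega$ by bounded regular subdomains, apply Rellich on each $\Omega_k$, and control the tail $\Omega\setminus\Omega_k$ by H\"older together with $|\Omega\setminus\Omega_k|\to 0$, concluding compactness of $H^1(\Omega)\hookrightarrow \LL^2(\Omega)$ and hence \eqref{dis-poin}. The only difference is that you spell out the standard contradiction argument deducing Poincar\'e from compactness, which the paper leaves implicit.
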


Following \cite[Sections 4, 5]{GM13}, we now distinguish between data with \emph{zero} mean and data with \emph{nonzero} mean; note that the same property holds for the corresponding solutions to \eqref{pb} thanks to \emph{mass conservation} (Proposition \ref{prop-mass-c}). From here on we shall take Proposition \ref{pro-imp} for granted, i.e.~the fact that under the assumptions of Theorems \ref{thm-asym-zero} and \ref{thm-asym-nonzero} the validity of the Poincar\'e inequality \eqref{dis-poin} is ensured.
 
\subsection{The case $ \boldsymbol{ \overline{u}_0 = 0 } $: proof of Theorem \ref{thm-asym-zero}} \label{sect: long-zero}

In order to prove Theorem \ref{thm-asym-zero}, our first aim is to show that in the case of zero-mean data the corresponding solutions satisfy better smoothing effects (in fact analogous to those associated with the Dirichlet problem, see Remark \ref{rem: dirichlet} below). Afterwards, by borrowing some ideas firstly introduced in \cite{BG05,G} and then exploited in \cite{GM13,GMP13}, we prove \emph{absolute bounds}, namely $ \LL^\infty $ estimates independent of the initial datum.

\begin{lem}\label{lem: estimates-zero}
Let the hypotheses of Theorem \ref{thm-asym-zero} be fulfilled. Then the following smoothing estimate holds: 
\begin{equation}\label{thm01-smooth-est-zero} 
	\Vert u(t)\Vert_{\infty}\leq 
	\begin{cases}
	K \, t^{-\frac{N}{2q_0+N(m_2-1)}} \, \Vert u_0\Vert_{q_0}^{\frac{2q_0}{2q_0+N(m_2-1)}} & \forall t \in\left( 0 , \left\| u_0 \right\|_{q_0}^{\frac{2q_0}{N}} \right) , \\
	K \, t^{-\frac{N}{2q_0+N(m_1-1)}} \, \Vert u_0\Vert_{q_0}^{\frac{2q_0}{2q_0+N(m_1-1)}} & \forall t\geq \Vert u_0\Vert_{q_0}^{\frac{2q_0}{N}}  ,
\end{cases}
\end{equation}
where $ K $ is a positive constant depending only on $m_1 , m_2 , c_1 , c_2 , \Omega $. 
\end{lem}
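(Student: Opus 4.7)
The plan is to mirror the Moser-iteration strategy used in the proof of Theorem \ref{thm01}, with one key substitution: wherever the standard Gagliardo-Nirenberg-Sobolev inequality \eqref{NGN-bis} was applied, we use instead its zero-mean nonlinear counterpart provided by Corollary \ref{cor: dis-zeromean}. Since $|\Omega|<\infty$ and \eqref{NGN} holds, Proposition \ref{pro-imp} guarantees the Poincar\'e inequality \eqref{dis-poin}, so the hypotheses of Proposition \ref{lem: dis-zeromean} and Corollary \ref{cor: dis-zeromean} are all met. Moreover, by mass conservation (Proposition \ref{prop-mass-c}), the solution $u(t)$ inherits the zero-mean property of $u_0$, which is exactly what we need in order to exploit those nonlinear functional inequalities.

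Concretely, I would reproduce the computation \eqref{la3.2}--\eqref{la3.2-bis} but apply Corollary \ref{cor: dis-zeromean} with $\xi=u(t)$ and $\Phi_l(y):=|y|^{l-1}y$, choosing $l=(m+p_k-1)/2\ge 1/2$ (which satisfies \eqref{ineq-nonlin}--\eqref{ineq-nonlin-2} with $C_0=C_1=1$) so that $\Phi_l(\xi)=|u|^{(m+p_k-1)/2-1}u$, i.e.\ exactly the function $f=u^{(m+p_k-1)/2}$ appearing in the iteration. The crucial point is that Corollary \ref{cor: dis-zeromean} produces no additive $\|f\|_s$ term on the right-hand side, hence the recurrence relation obtained by picking $s=2p_k/(m+p_k-1)$ and $r=2+2s/N$ no longer carries the second ``extra'' summand in \eqref{la3.2-ter}, and one directly lands on a clean relation of the type $\|u(t_{k+1})\|_{p_{k+1}}^{p_{k+1}}\le D\,2^k t^{-1}M^{m_1-m_2}\|u(t_k)\|_{p_k}^{(N+2)p_k/N}$. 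Iterating exactly as in \eqref{la3.2-young}--\eqref{la3.2-young-limit-bis} and letting $k\to\infty$ yields
\[
\|u(t)\|_\infty\le D\left(\frac{M^{m_1-m_2}}{t}\right)^{\!\!\frac{N}{2q_0+N(m-1)}}\|u_0\|_{q_0}^{\frac{2q_0}{2q_0+N(m-1)}}
\]
with $m=m_2$ in the regime $\|u(t)\|_\infty>1$ (bound \eqref{upperboundphiprime}) and $m=m_1$ otherwise; crucially, no additive $\|u_0\|_{q_0}$ term appears.

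To turn this into \eqref{thm01-smooth-est-zero} I would then follow the same logical skeleton as in the proof of Theorem \ref{thm01}: define $t^\ast$ as in \eqref{def-t-ast}, distinguish the three cases $t^\ast=\infty$, $t^\ast=0$ and $t^\ast\in(0,\infty)$, and in the latter case exploit continuity at $t=t^\ast$ plus the absolute bound $\|u(t^\ast)\|_\infty=1$ to get an upper bound on $t^\ast$ in terms of $\|u_0\|_{q_0}$. A direct check shows that, given the precise form of the inequalities, this upper bound is of order $\|u_0\|_{q_0}^{2q_0/N}$, which is exactly the transition time appearing in \eqref{thm01-smooth-est-zero}. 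The extension from $q_0>1$ to $q_0=1$ and the removal of the $u_0\in \LL^\infty(\Omega)$ assumption proceed exactly as in Corollary \ref{lemma: q_0-t-ast-ext} and at the end of the proof of Theorem \ref{thm01}, using interpolation, $t/2$-shift arguments and Young's inequality.

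The main technical obstacle I anticipate is the clean bookkeeping in the case $m_1>m_2$, where the lower bound $\phi'(u)\ge c\,M^{m_2-m_1}|u|^{m_1-1}$ carries the factor $M=\|u_0\|_\infty$. Removing this dependence requires the same cascade of $t/2$-shift arguments and Young-type inequalities that occupied the bulk of Lemma \ref{lemma: q_0-t-ast}. Fortunately, nothing in those arguments relies on the presence of the additive $\|u_0\|_{q_0}$ term, so they transfer verbatim; the absence of that term is in fact what allows the final estimate to retain the sharp scaling on the right-hand side. Patching the two regimes together at $t=\|u_0\|_{q_0}^{2q_0/N}$ is then a matter of using continuity of $t\mapsto \|u(t)\|_\infty$ (cf.\ Remark \ref{rem: uniq}) and checking that both branches of \eqref{thm01-smooth-est-zero} agree, up to the multiplicative constant, with the universal absolute bound $\|u(t)\|_\infty\le K$ attained at the transition time.
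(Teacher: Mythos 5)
Your proposal is correct and follows essentially the same route as the paper: the paper's proof likewise applies Corollary \ref{cor: dis-zeromean} with $\Phi_l(y)=y^{(m_i+p_k-1)/2}$ (using mass conservation to keep $\overline{u}(t)=0$), so that \eqref{la3.2-bis} holds with $C_S^\ast$ and without the extra integral term, and then re-runs the Moser iteration and the $t^\ast$ bookkeeping of Section \ref{sect: short} to obtain \eqref{thm01-smooth-est-zero} with no additive $\|u_0\|_{q_0}$ term. Your additional observations (the cleaner bound on $t^\ast$ and the verbatim transfer of the $M$-removal via Young plus $t/2$-shift) match what the paper leaves as the "tedious but not difficult" check.
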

\begin{proof}
Let $ p_k $ be any increasing sequence of positive numbers such that $p_0=q_0$ and $p_{\infty}=\infty$. In view of the assumptions on $ \Omega $, we can apply Corollary \ref{cor: dis-zeromean} with the choices
\begin{equation*}\label{phi-l-k}
\Phi_l(y) \equiv \Phi_k(y) = y^{\frac{m_i+p_k-1}{2}} \quad i=1,2 \, ;
\end{equation*}
moreover, since $ \overline{u}_0 = 0 $, mass conservation guarantees that $ \overline{u}(t)=0 $ for all $ t>0 $ as well. As a consequence, there holds
\begin{equation}\label{phi-l-k-u}
\frac{\left\| u(t) \right\|_{\frac{r(m_i+p_k-1)}{2}}^{\frac{m_i+p_k-1}{\vartheta}}}{C_S^{\ast\frac{2}{\vartheta}} \left\| u(t) \right\|^{\frac{(1-\vartheta)(m_i+p_k-1)}{\vartheta}}_{\frac{s(m_i+p_k-1)}{2}} } \le \int_{\Omega} \left| \nabla\!\left(u^{\frac{m_i+p_k-1}{2}}\right)\!(x,t)\right|^2 dx \quad \forall t > 0 \, , \ \, i=1,2 \, .
\end{equation} 
Hence, in the proof of Lemma \ref{lemma: q_0-t-ast} one can apply \eqref{phi-l-k-u} to \eqref{la3.2}: as a result, a stronger version of inequality \eqref{la3.2-bis} holds (with $ C_S $ replaced by $ C_S^\ast $), that is the same as above with no integral term in the right-hand side. All of the estimates carried out in Section \ref{sect: short} can then be recomputed by taking into account such an improvement: it is not difficult, though tedious, to check that the latter lead to the analogue of \eqref{thm01-smooth-est} with no additional term in the right-hand side, namely \eqref{thm01-smooth-est-zero}.
\end{proof} 

\begin{lem}\label{pro: absb}
Let the hypotheses of Theorem \ref{thm-asym-zero} be fulfilled. Then the following absolute bound holds: 
\begin{equation}\label{absb-est}
\Vert u(t)\Vert_{\infty} \leq 
\begin{cases}
 K \, t^{-\frac{1}{m_2-1}}  & \forall t \in \left(0,1 \right) , \\
 K \, t^{-\frac{1}{m_1-1}} & \forall t \ge 1 \, ,
\end{cases}
\end{equation} 
where $ K $ is a positive constant depending only on $m_1 , m_2 , c_1 , c_2 , \Omega $. 
\end{lem}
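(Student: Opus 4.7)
The plan is to derive an \emph{absolute} (initial-data-independent) decay bound on some $\LL^p$ norm of $u(t)$ via a differential inequality of the form $y^\prime \le -C y^\gamma$ with $\gamma>1$, which yields $y(t)\le K\,t^{-1/(\gamma-1)}$, and then transfer this bound to $\LL^\infty$ using the smoothing estimate from Lemma \ref{lem: estimates-zero}. The essential input making the ODE argument work is the nonlinear Poincar\'e inequality (Proposition \ref{lem: dis-zeromean}), which is available here because $\overline{u(t)}=\overline{u_0}=0$ for all $t>0$ by mass conservation (Proposition \ref{prop-mass-c}).

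Concretely, I would test \eqref{pb} with $u|u|^{p-2}$ for $p\ge 2$ and integrate by parts to obtain
\[
\tfrac{1}{p}\tfrac{d}{dt}\|u(t)\|_p^p = -(p-1)\int_\Omega |u|^{p-2}\phi^\prime(u)\,|\nabla u|^2\,dx\,.
\]
The pointwise identity
\[
|u|^{p-2}\,|u|^{m_i-1}\,|\nabla u|^2 \;=\; \bigl(\tfrac{2}{m_i+p-1}\bigr)^2\,\bigl|\nabla\bigl(u|u|^{(m_i+p-3)/2}\bigr)\bigr|^2
\]
combined with Proposition \ref{lem: dis-zeromean} applied to $\Phi_l(y)=y|y|^{(m_i+p-3)/2}$ (with $l=(m_i+p-1)/2\ge 1/2$) converts the gradient integral into $\|u\|_{m_i+p-1}^{m_i+p-1}$ up to constants. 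H\"older's inequality on the finite-measure domain, $\|u\|_p\le|\Omega|^{(m_i-1)/[p(m_i+p-1)]}\,\|u\|_{m_i+p-1}$, then closes the estimate to
\[
\tfrac{d}{dt}\|u\|_p^p \;\le\; -C_i\,\|u\|_p^{m_i+p-1}\,,
\]
whose solution is $\|u(t)\|_p\le K\,t^{-1/(m_i-1)}$, independently of $u_0$.

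The main obstacle is that \eqref{cond-phi-2}--\eqref{cond-phi-3} provide the two lower bounds on $\phi^\prime$ only on the complementary regions $\{|u|\le 1\}$ and $\{|u|>1\}$, while Proposition \ref{lem: dis-zeromean} needs a test function defined on all of $\Omega$. I would proceed in two phases. For short times $t\in(0,1)$ I would aim for the $i=2$ decay, using $\phi^\prime(u)\ge c_2|u|^{m_2-1}$ on $\{|u|>1\}$ and exploiting the pointwise bound $|u|\le 1$ on the complement to absorb the missing contribution into an additive $|\Omega|$-term; this is harmless because the target bound $t^{-1/(m_2-1)}$ is large as $t\to 0$, so such a lower-order term does not alter the asymptotics and the resulting ODE still integrates to $\|u(t)\|_p\le K\,t^{-1/(m_2-1)}$. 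By iterating (or continuing) this argument past $t=1$, together with \eqref{thm01-smooth-est-zero} and the non-expansivity \eqref{eq: non-exp}, one produces a threshold time $t_\star>0$ depending only on $m_1,m_2,c_1,c_2,\Omega$ such that $\|u(t_\star)\|_\infty\le 1$. From $t_\star$ onward the bound $\phi^\prime(u)\ge c_1|u|^{m_1-1}$ holds globally on $\Omega$, so the argument with $i=1$ runs cleanly and gives $\|u(t)\|_p\le K(t-t_\star)^{-1/(m_1-1)}\le K^\prime\,t^{-1/(m_1-1)}$ for $t\ge 2t_\star\vee 1$; the remaining bounded interval is handled directly by non-expansivity.

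Finally, applying \eqref{thm01-smooth-est-zero} on $[t/2,t]$ with initial norm $\|u(t/2)\|_p\le K(t/2)^{-1/(m_i-1)}$ transfers the absolute $\LL^p$ bound to $\LL^\infty$: a direct algebraic check shows that the smoothing exponent $N/[2p+N(m_i-1)]$ composes with the factor $2p/[(m_i-1)(2p+N(m_i-1))]$ to give exactly $1/(m_i-1)$, provided one selects the $m_2$-branch of \eqref{thm01-smooth-est-zero} for $t\in(0,1)$ and the $m_1$-branch for $t\ge 1$ (the relevant inequality between $t/2$ and $\|u(t/2)\|_p^{2p/N}$ holds automatically in each regime thanks to the absolute bounds just established). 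This yields \eqref{absb-est} after relabelling the constant.
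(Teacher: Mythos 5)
Your overall architecture (test with $u|u|^{p-2}$, convert the dissipation into a zero-order term via the nonlinear Poincar\'e inequality, integrate the resulting ODE, then transfer to $\LL^\infty$ through Lemma \ref{lem: estimates-zero} with $t/2$-shifts) is the right one, but the step where you handle the case $m_1>m_2$ --- the only delicate case --- has a genuine gap, in fact two. First, your short-time differential inequality $\tfrac{d}{dt}\|u\|_p^p\le -C\|u\|_p^{m_2+p-1}+C'(\Omega)$ is not justified by the tools available. On $\{|u|\le 1\}$ the only lower bound is $\phi'(u)\ge c_1|u|^{m_1-1}$, which for $m_1>m_2$ is \emph{not} comparable to $|u|^{m_2-1}$ there; hence the dissipation only controls $\int_{\{|u|>1\}}\big|\nabla\big(u^{(m_2+p-1)/2}\big)\big|^2$ plus the $m_1$-power gradient on the complement, i.e.\ the gradient of a truncated (two-power) function of $u$. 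The ``missing contribution'' you want to absorb into an additive $|\Omega|$-term is a \emph{gradient} integral, not a zero-order one, and to convert what you do control into $\|u\|_{m_2+p-1}^{m_2+p-1}-C|\Omega|$ you would need a Poincar\'e-type inequality for something like $\operatorname{sgn}(u)\big(|u|^{l}-1\big)_+$ with $\overline{u}=0$ (with an additive constant). Proposition \ref{lem: dis-zeromean} does not cover this: such truncations violate the lower bound in \eqref{ineq-nonlin}, so the inequality you implicitly invoke is an unproved new lemma, not a consequence of what is stated. Second, even granting that inequality, the ODE it produces has an additive constant, so it only drives $\|u(t)\|_p$ (and then $\|u(t)\|_\infty$) down to an absolute \emph{floor} depending on $\Omega,c_i,m_i$, which need not be $\le 1$; your claim that ``continuing this argument past $t=1$'' produces a threshold $t_\star$ with $\|u(t_\star)\|_\infty\le 1$ therefore does not follow, and without it you cannot switch to the global bound $\phi'(u)\ge c_1|u|^{m_1-1}$ for the $m_1$-phase.

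For comparison, the paper sidesteps both problems: since $\|u(t)\|_\infty\le M:=\|u_0\|_\infty$, the \emph{global} single-power bound \eqref{upperboundphiprime}, $\phi'(u)\ge c\,M^{-(m_1-m_2)}|u|^{m_1-1}$, makes Proposition \ref{lem: dis-zeromean} applicable with a pure power, yielding an $M$-dependent $\LL^{q_0}$ decay; combining it with the upper branch of \eqref{thm01-smooth-est-zero} and iterating $t/2$-shifts removes the $M$-dependence and gives the absolute bound $K\,t^{-1/(m_2-1)}$ on $(0,t^\ast)$, with $t^\ast$ the exit time \eqref{def-t-ast}; this forces $t^\ast\le K^{m_2-1}$, and past $t^\ast$ the equation is effectively of pure $m_1$-type. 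A viable repair of your phase two is close in spirit: once you have \emph{some} absolute $\LL^\infty$ bound at an absolute time, reapply \eqref{upperboundphiprime} with that absolute constant in place of $M$ and run the clean $m_1$-argument; but your phase one still requires either the truncated Poincar\'e lemma you did not prove or the paper's $M$-weighted iteration.
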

\begin{proof}
Let us consider first the case $ m_1>m_2 $. Let $ t^\ast $ be defined as in \eqref{def-t-ast}, and suppose as a first step that $ t^\ast \in (0,\infty) $. With no loss of generality we can take again $ u_0 \in \LL^\infty(\Omega) $ and set $ M:=\| u_0 \|_\infty  > 1 $, so that \eqref{upperboundphiprime} holds. If we multiply the differential equation in \eqref{pb} by $u^{q_0-1}$ (let $ q_0 > 1 $), integrate in $\Omega$ and exploit \eqref{upperboundphiprime}, we obtain: 
\begin{equation}\label{eq: diff-q}
\begin{aligned}
\frac{d}{dt}\int_{\Omega}\left| u(x,t) \right|^{q_0} dx = & -q_0(q_0-1)\int_{\Omega} \left|u(x,t)\right|^{q_0-2} \phi'(u(x,t)) \left|\nabla u(x,t) \right|^2 dx \\ 
\leq & -\frac{c}{M^{m_1-m_2}} \int_{\Omega} \left| u(x,t) \right|^{m_1+q_0-3} \left| \nabla u(x,t) \right|^2 dx \\
=  & -\frac{4 c}{\left( m_1+q_0-1 \right)^2 M^{m_1-m_2}} \int_{\Omega} \left| \nabla\! \left( u^{\frac{m_1+q_0-1}{2}} \right)\! (x,t) \right|^2 dx \, .
\end{aligned}
\end{equation} 
By arguing as in the proof of Lemma \ref{lem: estimates-zero}, in the r.h.s.~of \eqref{eq: diff-q} we can apply the nonlinear Poincar\'e inequality \eqref{ineq-nonlin-ineq} with the choice $ \Phi_{l}(y) = y^{(m_1+q_0-1)/{2}} $, which yields
\begin{equation}\label{eq: diff-q-ineq}
\frac{d}{dt}\int_{\Omega}\left| u(x,t) \right|^{q_0} dx \leq - \frac{4 c}{|\Omega|^{\frac{m_1-1}{q_0}} \, C_P^{\ast 2} \left( m_1+q_0-1 \right)^2 M^{m_1-m_2}} \left( \int_{\Omega}\left| u(x,t) \right|^{q_0} dx \right)^{\frac{m_1+q_0-1}{q_0}} ;
\end{equation}
by integrating \eqref{eq: diff-q-ineq} we end up with  
\begin{equation}\label{eq: diff-q-ine2}
\Vert u(t)\Vert_{q_0} \leq \left( \frac{C}{M^{m_1-m_2}} \, t + \Vert u_0\Vert_{q_0}^{1-m_1} \right)^{-\frac{1}{m_1-1}} \quad \forall t \ge 0 \, , \quad  C := \frac{4 \, c \, q_0}{|\Omega|^{\frac{m_1-1}{q_0}} \, C_P^{\ast 2} \, (m_1-1) \left( m_1+q_0-1 \right)^2} \, . 
\end{equation} 
In particular, \eqref{eq: diff-q-ine2} implies
\begin{equation}\label{eq: diff-q-ine3}
\Vert u(t)\Vert_{q_0} \leq C^{-\frac{1}{m_1-1}} \, \| u_0 \|^{\frac{m_1-m_2}{m_1-1}}_{\infty^{\phantom{a}}} \, t^{-\frac{1}{m_1-1}} \quad  \forall t > 0 \, .
\end{equation} 
Now we notice that, by means of the same methods of proof as in Section \ref{sect: short}, the upper branch of estimate \eqref{thm01-smooth-est-zero} in fact holds up to $ t=t^\ast $. As a consequence, thanks to a $ t/2 $-shift applied to the latter plus estimate \eqref{eq: diff-q-ine3} evaluated at $ t/2 $, we infer
\begin{equation}\label{eq: diff-q-ine4}
\Vert u(t)\Vert_{\infty} \le C^\prime \, t^{-\frac{2q_0+N(m_1-1)}{(m_1-1)[2q_0+N(m_2-1)]}} \, \Vert u_0 \Vert_{\infty^{\phantom{a}}}^{\frac{2q_0(m_1-m_2)}{(m_1-1)[2q_0+N(m_2-1)]}} \quad \forall t \in \left( 0 , t^\ast \right) ,
\end{equation}  
where $ C^\prime>0 $ is a suitable constant depending on $ m_1,m_2,N,q_0,C $ and the constant $ K $ from \eqref{thm01-smooth-est-zero}. It is straightforward to check that a routine iteration of \eqref{eq: diff-q-ine4} (which still exploits a $t/2$-shift argument) yields
\begin{equation}\label{smoothing-absb}
\Vert u(t) \Vert_\infty \le K \, t^{-\frac{1}{m_2-1}} \quad \forall t \in (0,t^\ast)
\end{equation} 
for some $ K>0 $ as in the statement (the role of $q_0$ here is inessential), which will not be relabelled from here on. For $ t>t^\ast $ one can reason exactly as if $ \phi(u) $ is of porous medium type with $ m=m_1 $ (recall the beginning of the proof of Theorem \ref{thm01}, case $ m_1>m_2 $). This gives rise to the analogue of \eqref{smoothing-absb}, namely 
\begin{equation}\label{smoothing-absb-2}
\Vert u(t) \Vert_\infty \le K \left(t-t^\ast\right)^{-\frac{1}{m_1-1}} \quad \forall t > t^\ast \, .
\end{equation}
Thanks to \eqref{smoothing-absb}, by the definition of $ t^\ast $, we can therefore deduce that 
\begin{equation}\label{abs-estimate-t-ast}
t^\ast \le T:=K^{m_2-1} \, . 
\end{equation}
In particular, by combining \eqref{smoothing-absb-2} with \eqref{abs-estimate-t-ast} we get 
\begin{equation}\label{smoothing-absb-3}
\Vert u(t) \Vert_\infty \le K \, t^{-\frac{1}{m_1-1}} \quad \forall t > 2T \, ;
\end{equation}
hence, upon collecting \eqref{smoothing-absb}, \eqref{abs-estimate-t-ast}, \eqref{smoothing-absb-3} and the non-expansivity of the norms, we end up with 
\begin{equation}\label{smoothing-part-1}
\Vert u(t)\Vert_{\infty} \leq 
\begin{cases}
 K \, t^{-\frac{1}{m_2-1}}  & \forall t \in \left(0,t^\ast \right] , \\
 1 & \forall t \in \left( t^\ast , 2T \right) , \\
 K \, t^{-\frac{1}{m_1-1}} & \forall t \ge 2T \, .
\end{cases}
\end{equation} 
By arguing as in the proof of Theorem \ref{thm01} (case $ m_1>m_2 $), up to choosing a larger constant $K$ it is apparent that estimate \eqref{smoothing-part-1} is implied by
\begin{equation*}\label{smoothing-part-2}
\Vert u(t)\Vert_{\infty} \leq 
\begin{cases}
 K \, t^{-\frac{1}{m_2-1}}  & \forall t \in \left(0,2T \right) , \\
 K \, t^{-\frac{1}{m_1-1}} & \forall t \ge 2T \, ,
\end{cases}
\end{equation*} 
whence \eqref{absb-est}.

In the case where $ t^\ast=0 $, by arguing as in previous computations it is direct to see that \eqref{smoothing-absb-3} holds for all $ t>0 $: since $ m_1>m_2 $, such an estimate trivially implies \eqref{absb-est}. In the case where $ t^\ast=\infty $, clearly \eqref{smoothing-absb} holds for all $ t>0 $, from which again \eqref{absb-est} follows. 

Let us finally discuss the case $ m_1 \le m_2 $: as remarked in the corresponding proof of Theorem \ref{thm01}, both the lower bound \eqref{cond-phi-lower-1} and \eqref{cond-phi-lower-2} on $ \phi^\prime $ hold. In particular, by reasoning as in the proof of Lemma \ref{lem: estimates-zero}, we can deduce the validity of both the estimate
\begin{equation}\label{smoothing-easy-1}
\Vert u(t) \Vert_\infty \le K \, t^{-\frac{N}{2q_0+N(m_1-1)}} \, \Vert u_0\Vert_{q_0}^{\frac{2q_0}{2q_0+N(m_1-1)}} \quad \forall t >0
\end{equation}
and
\begin{equation}\label{smoothing-easy-2}
\Vert u(t) \Vert_\infty \le K \, t^{-\frac{N}{2q_0+N(m_2-1)}} \, \Vert u_0\Vert_{q_0}^{\frac{2q_0}{2q_0+N(m_2-1)}} \quad \forall t>0 \, . 
\end{equation}
Moreover, by exploiting again \eqref{cond-phi-lower-1}--\eqref{cond-phi-lower-2} and arguing similarly to the first part of the proof (with simplifications since we are basically in the single-power case), we obtain the $ \LL^{q_0} $-absolute bounds
\begin{equation}\label{smoothing-easy-3}
\Vert u(t) \Vert_{q_0} \leq K \, t^{-\frac{1}{m_1-1}} \quad \textrm{and} \quad \Vert u(t)\Vert_{q_0} \leq K \, t^{-\frac{1}{m_2-1}} \quad \forall t>0 \, . 
\end{equation}
By gathering \eqref{smoothing-easy-1}--\eqref{smoothing-easy-3} through one $t/2$-shift step, we deduce the $ \LL^\infty $-absolute bounds
\begin{equation}\label{smoothing-easy-4}
\Vert u(t) \Vert_{\infty} \leq K \, t^{-\frac{1}{m_1-1}} \quad \textrm{and} \quad \Vert u(t)\Vert_{\infty} \leq K \, t^{-\frac{1}{m_2-1}} \quad \forall t>0 \, . 
\end{equation}
It is then straightforward to check that \eqref{smoothing-easy-4} is equivalent to \eqref{absb-est}, since $ m_1 \le m_2 $. 
\end{proof}

We are now in position to prove Theorem \ref{thm-asym-zero}. 
\begin{proof}[Proof of Theorem \ref{thm-asym-zero}]
Let us start again from the case $ m_1>m_2 $. If we combine estimate \eqref{thm01-smooth-est-zero} with \eqref{absb-est}, we obtain:   
\begin{equation}\label{smoothing-part-3}
\Vert u(t)\Vert_{\infty} \leq 
\begin{cases}
 K \left( t^{-\frac{N}{2q_0+N(m_2-1)}} \, \Vert u_0 \Vert_{q_0^{\phantom{a}}}^{\frac{2q_0}{2q_0+N(m_2-1)}} \wedge t^{-\frac{1}{m_2-1}} \right) & \forall t \in \left(0, \| u_0 \|^{\frac{2q_0}{N}}_{q_0^{\phantom{a}}} \wedge 1 \right) , \\
 K \left( t^{-\frac{N}{2q_0+N(m_1-1)}} \, \Vert u_0 \Vert_{q_0^{\phantom{a}}}^{\frac{2q_0}{2q_0+N(m_1-1)}} \wedge t^{-\frac{1}{m_1-1}} \right) & \forall t > \| u_0 \|^{\frac{2q_0}{N}}_{q_0^{\phantom{a}}} \vee 1 \, .
\end{cases}
\end{equation} 
Let us first deal with the case $ \| u_0 \|_{q_0} \le 1 $. Up to a different multiplicative constant $K$, under such assumption it is not difficult to check that \eqref{smoothing-part-3} is equivalent to 
\begin{equation}\label{smoothing-part-case1}
\Vert u(t)\Vert_{\infty} \leq 
\begin{cases}
K \, t^{-\frac{N}{2q_0+N(m_2-1)}} \, \Vert u_0 \Vert_{q_0^{\phantom{a}}}^{\frac{2q_0}{2q_0+N(m_2-1)}} & \forall t \in \left( 0, \| u_0 \|^{\frac{2q_0}{N}}_{q_0^{\phantom{a}}} \right) , \\ 
K \, t^{-\frac{N}{2q_0+N(m_1-1)}} \left( t + \| u_0 \|_{q_0}^{1-m_1} \right)^{-\frac{2q_0}{(m_1-1)[2q_0 + N(m_1-1)]}} & \forall t > 1 \, .
\end{cases}
\end{equation}  
As concerns the upper branch, it is enough to compare the two time powers involved in the minimum in \eqref{smoothing-part-3} both as $ t \downarrow 0 $ and at $ t=\| u_0 \|^{{2q_0}/{N}}_{q_0} $: one sees that the first one is always smaller. On the other hand, by means of the change of variables $ \tau = \| u_0 \|_{q_0}^{m_1-1} \, t $, one can show that the lower branches of \eqref{smoothing-part-3} and \eqref{smoothing-part-case1} are indeed equivalent. We are therefore left with providing an estimate in the region 
\begin{equation*}\label{smoothing-part-region-1} 
\| u_0 \|^{\frac{2q_0}{N}}_{q_0^{\phantom{a}}} \le t \le 1 \, .
\end{equation*} 
Here we have to exploit the lower branch of \eqref{thm01-smooth-est-zero} and the upper branch of \eqref{absb-est}, which yield
\begin{equation}\label{smoothing-part-5}
\| u(t) \|_\infty \le  K \left( t^{-\frac{N}{2q_0+N(m_1-1)}} \, \Vert u_0 \Vert_{q_0^{\phantom{a}}}^{\frac{2q_0}{2q_0+N(m_1-1)}} \wedge t^{-\frac{1}{m_2-1}} \right) \quad \forall t \in \left[ \| u_0 \|^{\frac{2q_0}{N}}_{q_0^{\phantom{a}}} , 1 \right] .
\end{equation} 
By comparing the two time powers of \eqref{smoothing-part-5} at the extremals $ t=\| u_0 \|^{{2q_0}/{N}}_{q_0}  $ and $ t=1 $, it is straightforward to show that the first one is always smaller, so that \eqref{smoothing-part-5} is in fact equivalent to 
\begin{equation}\label{smoothing-part-6}
\| u(t) \|_\infty \le  K \, t^{-\frac{N}{2q_0+N(m_1-1)}} \, \Vert u_0 \Vert_{q_0^{\phantom{a}}}^{\frac{2q_0}{2q_0+N(m_1-1)}} \quad \forall t \in \left[ \| u_0 \|^{\frac{2q_0}{N}}_{q_0^{\phantom{a}}} , 1 \right] .
\end{equation} 
Clearly, \eqref{smoothing-part-case1} and \eqref{smoothing-part-6} give \eqref{smoothing-asym-zero-1}. 

We finally deal with the case $ \| u_0 \|_{q_0} > 1 $. By reasoning in a similar way to above, one can check that \eqref{smoothing-part-3} is now the same as 
\begin{equation}\label{smoothing-part-case2}
\Vert u(t)\Vert_{\infty} \leq
\begin{cases}
K \, t^{-\frac{N}{2q_0+N(m_2-1)}} \left( t + \| u_0 \|_{q_0}^{1-m_2} \right)^{-\frac{2q_0}{(m_2-1)[2q_0 + N(m_2-1)]}} &  \forall t \in \left( 0, 1 \right) ,  \\
K \, t^{-\frac{1}{m_1-1}} & \forall t > \| u_0 \|^{\frac{2q_0}{N}}_{q_0^{\phantom{a}}} \, ,
\end{cases}
\end{equation} 
up to a different constant $K$. In the intermediate region 
\begin{equation*}\label{smoothing-part-region-2}
1 \le t \le \| u_0 \|^{\frac{2q_0}{N}}_{q_0^{\phantom{a}}} \, ,
\end{equation*} 
by combining the upper branch of \eqref{thm01-smooth-est-zero} with the lower branch of \eqref{absb-est} we get 
\begin{equation}\label{smoothing-part-5-bis-bis}
\| u(t) \|_\infty \le  K \left( t^{-\frac{N}{2q_0+N(m_2-1)}} \, \Vert u_0 \Vert_{q_0^{\phantom{a}}}^{\frac{2q_0}{2q_0+N(m_2-1)}} \wedge t^{-\frac{1}{m_1-1}} \right) \quad \forall t \in \left[ 1 , \| u_0 \|^{\frac{2q_0}{N}}_{q_0^{\phantom{a}}} \right] .
\end{equation} 
By comparing the two time powers of \eqref{smoothing-part-5-bis-bis} at the extremals $ t=1 $ and $ t=\| u_0 \|_{q_0}^{{2q_0}/N} $, it is direct to show that the latter is equivalent to 
\begin{equation*}\label{smoothing-part-5-oltem}
\| u(t) \|_\infty \le  K \, t^{-\frac{1}{m_1-1}} \quad \forall t \in \left[ 1 , \| u_0 \|^{\frac{2q_0}{N}}_{q_0^{\phantom{a}}} \right] ,
\end{equation*} 
which, together with \eqref{smoothing-part-case2}, gives rise to \eqref{smoothing-asym-zero-2}. 

In order to complete the proof, we are left with addressing the case $ m_1 \le m_2 $. Actually in the above computations we never used the fact that $  m_1>m_2 $, so they also hold for $ m_1 \le m_2 $. Hence, we just need to make sure that \eqref{smoothing-asym-zero-1}--\eqref{smoothing-asym-zero-2} are comparable to
\begin{equation}\label{eq: est-case-easy} 
t^{-\frac{N}{2q_0+N(m_2-1)}} \, \Vert u_0 \Vert_{q_0^{\phantom{a}}}^{\frac{2q_0}{2q_0+N(m_2-1)}} \wedge t^{-\frac{1}{m_2-1}} \wedge  t^{-\frac{N}{2q_0+N(m_1-1)}} \, \Vert u_0 \Vert_{q_0^{\phantom{a}}}^{\frac{2q_0}{2q_0+N(m_1-1)}} \wedge t^{-\frac{1}{m_1-1}} \, ,
\end{equation}
namely the best estimate one can get from Lemmas \ref{lem: estimates-zero}--\ref{pro: absb} (recall in particular the end of proof of Lemma \ref{pro: absb}). To this end, because we only deal with time-power functions, in the case $ \| u_0 \|_{q_0} \le 1 $ it is enough to compare \eqref{smoothing-asym-zero-1} and \eqref{eq: est-case-easy} at
$$ t \downarrow 0 \, , \quad t=\| u_0 \|_{q_0^{\phantom{a}}}^{\frac{2q_0}{N}} \, , \quad t=1 \, , \quad t=\| u_0 \|_{q_0^{\phantom{a}}}^{1-m_1} \, , \quad t=\| u_0 \|_{q_0^{\phantom{a}}}^{1-m_2} \, , \quad t\to\infty \, ,$$
whereas in the case $ \| u_0 \|_{q_0}>1 $ it is enough to compare \eqref{smoothing-asym-zero-2} and \eqref{eq: est-case-easy} at 
$$ t \downarrow 0 \, , \quad t=\| u_0 \|_{q_0^{\phantom{a}}}^{1-m_2}  \, , \quad t=\| u_0 \|_{q_0^{\phantom{a}}}^{1-m_1} \, , \quad t=1 \, , \quad t=\| u_0 \|_{q_0^{\phantom{a}}}^{\frac{2q_0}{N}} \, , \quad t\to\infty \, . $$
A straightforward check yields the assertion.
\end{proof}

\begin{oss}[The Dirichlet problem]\label{rem: dirichlet}\rm
As mentioned previously, the same smoothing effects as in Lemma \ref{lem: estimates-zero} also hold for solutions of the \emph{homogeneous Dirichlet} problem
\begin{equation*}\label{pb-d}
	\begin{cases}
		u_t=\Delta\phi(u) & \textrm{in } \Omega\times \mathbb{R}^+ \, , \\
		u=0 & \textrm{on } \partial\Omega\times \mathbb{R}^+ \, , \\
		u(0)=u_0  & \textrm{in } \Omega \, .
	\end{cases}
\end{equation*}
Furthermore, in such case $ \Omega $ can be \emph{any} domain of $ \mathbb{R}^N $: this is due to the fact that the Gagliardo-Nirenberg-Sobolev inequalities (let $r,s,\vartheta$ be as in \eqref{NGN-parameters-theta}--\eqref{NGN-parameters-r-s-2})
\begin{equation*}\label{NGN-diri}
\left\| f \right\|_r \le C_S \left\| \nabla{f} \right\|_2^{\vartheta(s,r,N)} \left\| f \right\|_s^{1-\vartheta(s,r,N)} 
\end{equation*}
hold for all $ f \in \mathcal{D}(\mathbb{R}^N) $, hence in the whole $ \dot{H}^1(\Omega) \cap \LL^s(\Omega) $ by density, where $ \dot{H}^1(\Omega)$ stands for the closure of $ \mathcal{D}(\Omega) $ w.r.t.~the $ \LL^2 $ norm of the gradient. As for the improved estimates of Theorem \ref{thm-asym-zero}, one should require the finiteness of the measure of $ \Omega $, or more generally the validity of a \emph{sub-Poincar\'e} inequality according to \cite[Theorem 4.1]{GM16}, from which absolute bounds follow.
\end{oss} 

\subsection{The case $ \boldsymbol{ \overline{u}_0 \neq 0}$: proof of Theorem \ref{thm-asym-nonzero}} \label{sect: long-non}

If the mean value of the initial datum is not zero, according to \cite[Section 5]{GM13}, the first step in order to understand the long-time behaviour of the solution is the proof of the uniform convergence to the mean value itself (which, we recall, is preserved in view of Proposition \ref{prop-mass-c}).
\begin{lem}\label{lem: smooth-rel}
Let the hypotheses of Theorem \ref{thm01} be fulfilled. Suppose moreover that $ \Omega $ is of finite measure and that $ \overline{u}_0 \neq 0 $. Then the following smoothing estimate holds: 
\begin{equation}\label{eq: smooth}
 \left\| u(t)-\overline{u}_0 \right\|_\infty \le {K}_0 \, t^{-\frac{N}{2}} \left\| u_0 - \overline{u}_0 \right\|_{1} \quad \forall t \ge 1 \,  ,
\end{equation}
where $ {K}_0 $ is a positive constant depending on $ \| u_0 \|_1 , |\overline{u}_0| , m_1, m_2, c_1, c_2, \Omega $, which can be assumed to be increasing w.r.t.~$ \| u_0 \|_1 $ and locally bounded w.r.t.~$ \left| \overline{u}_0 \right| >0 $.
\end{lem}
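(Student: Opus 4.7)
The plan is to set $v := u - \overline{u}_0$ and to collect three basic ingredients. First, the constant function $\overline{u}_0$ is itself a weak energy solution of \eqref{pb}, so the $L^1$-contraction in Proposition \ref{thm-exi} yields $\|v(t)\|_1 \le \|v_0\|_1$ for all $t \ge 0$. Second, Proposition \ref{prop-mass-c} gives $\overline{v}(t) = 0$ for every $t$. Third, Theorem \ref{thm01} with $q_0 = 1$ (after a harmless time-shift) furnishes $M = M(\|u_0\|_1, m_i, c_i, \Omega)$ with $\|u(t)\|_\infty \le M$, hence $\|v(t)\|_\infty \le M + |\overline{u}_0|$, for every $t \ge 1$.

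\textbf{Convergence to the mean.} Next, I would exhibit a time $T_0 = T_0(\|u_0\|_1, |\overline{u}_0|, m_i, c_i, \Omega)$ such that $\|v(t)\|_\infty \le |\overline{u}_0|/2$ for all $t \ge T_0$. Testing $v_t = \Delta \phi(u)$ against $v$ yields
\[
\tfrac{1}{2}\,\tfrac{d}{dt}\|v(t)\|_2^2 = -\int_\Omega \phi'(u)\,|\nabla v|^2\,dx,
\]
so $\|v(t)\|_2$ is non-increasing. Since $v$ has zero mean, $u$ is uniformly bounded by $M$, and $\phi'$ is controlled from below by powers of $|u|$ via \eqref{cond-phi-2}--\eqref{cond-phi-3}, I would combine this identity with the nonlinear Poincar\'e inequality of Proposition \ref{lem: dis-zeromean} applied to suitable powers of $v$ to produce a quantitative $L^2$-decay rate; interpolating with the uniform $L^\infty$-bound then gives $\|v(t)\|_\infty \to 0$ at an explicit rate depending only on $\|u_0\|_1, |\overline{u}_0|$ and the data, from which $T_0$ is read off.

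\textbf{Linear regime via Moser iteration.} Assume WLOG $\overline{u}_0 > 0$. The bound $\|v(T_0)\|_\infty \le |\overline{u}_0|/2$ forces $u(T_0) \in [\overline{u}_0/2,\,3\overline{u}_0/2]$, and comparison with the constant solutions $\overline{u}_0/2$ and $3\overline{u}_0/2$ (Proposition \ref{thm-exi}) preserves this range for all subsequent times. Hence $a(x,t) := \phi'(u(x,t))$ obeys $c_0 \le a \le C_0$ uniformly, for positive constants depending only on $|\overline{u}_0|$, $\phi$ and $M$, and $v_t = \operatorname{div}(a\,\nabla v)$ becomes a uniformly parabolic linear divergence-form equation. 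A standard Moser iteration, with the ordinary Gagliardo--Nirenberg--Sobolev inequalities replaced by their zero-mean version (Corollary \ref{cor: dis-zeromean}, which applies since $\overline{v}(t)=0$), then delivers the heat-kernel-type smoothing
\[
\|v(t)\|_\infty \le C(t - T_0)^{-N/2} \|v(T_0)\|_1 \le C(t - T_0)^{-N/2} \|v_0\|_1 \qquad \forall t > T_0.
\]

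\textbf{Intermediate times and main obstacle.} To cover $t \in [1, 2T_0]$, I would separately prove a Lipschitz-in-$L^1$ bound $\|v(t)\|_\infty \le C'\,\|v_0\|_1$ with $C' = C'(\|u_0\|_1, |\overline{u}_0|, m_i, c_i, \Omega)$; combining with the decay above for $t \ge 2T_0$ and absorbing the factor $(2T_0)^{N/2}$ into $K_0$ yields \eqref{eq: smooth}. The hard part is exactly this intermediate step: on $[1, 2T_0]$ the coefficient $\phi'(u)$ may vanish where $u$ crosses zero, so the equation is only degenerate-parabolic there and uniform parabolicity is unavailable. I would combat this by exploiting the monotonicity of $\phi$, testing against $\phi(u) - \phi(\overline{u}_0)$ to produce the dissipation $\int |\nabla \phi(u)|^2$ and then invoking the nonlinear Poincar\'e of Proposition \ref{lem: dis-zeromean} on powers of $v$, in the spirit of the absolute bounds of Lemma \ref{pro: absb}; this should transfer the $L^1$-contraction into the required $L^\infty$ control on the bounded interval.
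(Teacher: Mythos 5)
Your overall plan inverts the paper's logic: you first want to trap $u$ in $[\overline{u}_0/2,\,3\overline{u}_0/2]$ and only then smooth, whereas the paper proves the smoothing \eqref{eq: smooth} directly in the degenerate regime (for all $t\ge 1$) and only afterwards, in the proof of Theorem \ref{thm-asym-nonzero}, deduces the trapping $|u|\ge|\overline{u}_0|/2$ from it. This inversion creates two genuine gaps, both at the same crux. First, in your ``convergence to the mean'' step, an exponential $\LL^2$ decay of $v=u-\overline{u}_0$ combined with a uniform $\LL^\infty$ bound does \emph{not} yield $\|v(t)\|_\infty\le|\overline{u}_0|/2$: interpolation between $\LL^2$ and $\LL^\infty$ controls $\LL^p$ norms for $p<\infty$ only, and the missing ingredient is precisely an $\LL^q$-$\LL^\infty$ regularization for the \emph{difference} $v$ while $\phi'(u)$ may still vanish where $u$ crosses zero --- i.e.\ exactly the estimate the lemma is asserting. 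Second, your ``intermediate times'' bound $\|v(t)\|_\infty\le C'\|v_0\|_1$ on $[1,2T_0]$ is again that same degenerate smoothing, and your proposed remedy does not supply it: testing with $\phi(u)-\phi(\overline{u}_0)$ gives dissipation of $\nabla\phi(u)$, which via Poincar\'e controls $\phi(u)-\overline{\phi(u)}$ rather than powers of $v$ (and $\phi'$ degenerates at $u=0$), while the absolute bounds of Lemma \ref{pro: absb} are proved for zero-mean solutions and, being independent of the datum, cannot produce a bound \emph{linear} in $\|v_0\|_1$, which is what \eqref{eq: smooth} requires on a bounded time interval.

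The paper closes exactly this gap by a weighted Moser iteration on the relative error $w:=u/\overline{u}_0-1$: testing with $(u-\overline{u}_0)^{p-1}$, using $\phi'(u)\ge\tilde c\,|u|^{m_1-1}$ (valid for $t\ge 1/2$ by the $\LL^\infty$ bound of Theorem \ref{thm01}), and writing the dissipation as $\int|\nabla\Phi_p(w)|^2$ with $\Phi_p(y)=\int_0^y|r|^{\frac p2-1}|r+1|^{\frac{m_1-1}{2}}dr$. The two-sided bounds of \cite[Lemma 5.18]{GM13} show $\Phi_p$ is comparable to $|y|^{p/2}$ with $p$-controlled constants, so Corollary \ref{cor: dis-zeromean} applies \emph{despite} the degeneracy at $w=-1$ (i.e.\ $u=0$), and iterating the resulting inequality gives $\|w(t)\|_\infty\lesssim t^{-N/4}\|w_0\|_2$ and then \eqref{eq: smooth} by interpolation and $t/2$-shifts. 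Your uniformly-parabolic iteration after the trapping time (your third step) is fine as far as it goes, but it only works once the trapping is available, and the trapping itself cannot be obtained by the soft arguments you invoke; so as written the proposal does not prove the lemma. If you want to salvage your structure, you must first establish an $\LL^q$-$\LL^\infty$ smoothing for $v$ valid across the degenerate region --- and that is essentially the $\Phi_p$-weighted iteration above, at which point the detour through uniform parabolicity is no longer needed.
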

\begin{proof}
We proceed along the lines of proof of \cite[Theorem 5.10]{GM13}, hence we do not give full technical details. As usual we take $ u_0 \in \LL^\infty(\Omega) $ with no loss of generality. By virtue of the mass-conservation property ensured by Proposition \ref{prop-mass-c}, we know that $ \overline{u}(t)=\overline{u}_0 $ for all $ t>0 $. Hence, if we let e.g.~$ p \ge 2 $, multiply \eqref{pb} by $ (u(t)-\overline{u}_0)^{p-1} $ and integrate by parts in $ \Omega \times (t_1,t_2) $, we (formally) obtain
\begin{equation}\label{non-expansivity-differences}
\begin{aligned}
\left\| u(t_2) - \overline{u}_0 \right\|_{p}^{p} + p \, (p-1) \, \int_{t_1}^{t_2} \int_{\Omega} \left| u(x,t)-\overline{u}_0 \right|^{p-2} \phi^\prime(u(x,t)) \left| \nabla u(x,t)\right|^2 dx dt
=  \left\| u(t_1) - \overline{u}_0 \right\|_{p}^{p} 
\end{aligned}
\end{equation}
for all $ t_2>t_1 > 0 $. As a consequence, we also deduce that  
\begin{equation}\label{eq: non-exp-diff}
\left\| u(t_2) - \overline{u}_0 \right\|_{p} \le \left\| u(t_1) - \overline{u}_0 \right\|_{p} \quad \forall p \in [1,\infty] \, , \quad \forall t_2 > t_1 \ge 0 \, ;
\end{equation}
we shall refer to such property as \emph{non-expansivity of the differences} (in $ \LL^p(\Omega) $ between the solution and its mean value, or any constant actually), in agreement with \eqref{eq: non-exp}. Note that in fact \eqref{eq: non-exp-diff} holds for all $ p \in [1,\infty] $. The justification of the above computations follows by remarks similar to those pointed out in the proof of Lemma \ref{lemma: q_0-t-ast}. Thanks to Theorem \ref{thm01} (with $ q_0=1 $) and \eqref{cond-phi-2}--\eqref{cond-phi-3}, we have that
\begin{equation}\label{phi-low-ev}
\tilde{c} \left| u(x,t) \right|^{m_1-1} \le \phi^\prime\!\left( u(x,t) \right) \quad \textrm{for a.e. } (x,t) \in \Omega \times \left[ 1/2,\infty \right)  ,
\end{equation}
for a suitable $ \tilde{c}>0 $ depending on $ m_1,m_2,c_1,c_2,C_S,N,\| u_0 \|_1 $. Upon introducing the \emph{relative error} $ w:=u/\overline{u}_0-1 $, setting
\begin{equation*}\label{phi-p}
\Phi_p(y) := \int_0^y |r|^{\frac{p}{2}-1} \left| r + 1 \right|^{\frac{m_1-1}{2}} dr \quad \forall y \in \mathbb{R}
\end{equation*}
and combining \eqref{phi-low-ev} with \eqref{non-expansivity-differences}, we infer the inequality
\begin{equation}\label{non-expansivity-differences-3}
\left\| w(t_2) \right\|_{p}^{p} + \tilde{c} \left|\overline{u}_0\right|^{m_1-1} p \, (p-1) \, \int_{t_1}^{t_2} \int_{\Omega} \left| \nabla \Phi_p(w)(x,t) \right|^2 dx dt \le \left\| w(t_1) \right\|_{p}^{p}
\end{equation} 
for all $t_2 >t_1>1/2$. In Lemma 5.18 of \cite{GM13} it was shown that $ \Phi_p $ satisfies
\begin{equation}\label{lemma518}
\frac{\tilde{C}}{p^{1+1\vee\frac{m_1-1}{2}}} \, |y|^{\frac{p}{2}} \le \left| \Phi_p(y) \right| \le \frac{\left( R+1 \right)^{\frac{m_1-1}{2}} }{p} \, |y|^{\frac{p}{2}} \quad \forall y \in [-R,R] \, , \quad \forall R > 1 \, ,
\end{equation} 
for a suitable positive constant $ \tilde{C} $ depending only on $m_1$. It is plain that $ \Phi_p $ can be modified in $ (-\infty,-R) \cup (R,\infty) $ so as to satisfy \eqref{lemma518} in the whole $ \mathbb{R} $: we denote by $ \Phi_p^R $ such function. Still as a consequence of Theorem \ref{thm01}, we know that there exists $ R=R_0 >1 $, which depends only on the quantities $ \| u_0 \|_1 , |\overline{u}_0| , m_1 , m_2 , c_1 , c_2 , C_S , N  $, such that  
\begin{equation*}\label{lemma518-choice}
\left| w(x,t) \right| \le \frac{\left\| u(t)-\overline{u}_0 \right\|_\infty}{\left| \overline{u}_0 \right|} \le {R}_0 \quad \textrm{for a.e. } (x,t) \in \Omega \times [1/2,\infty) \, .
\end{equation*}
Hence, we are in position to apply Corollary \ref{cor: dis-zeromean} to $ \Phi_l \equiv \Phi_{p}^{R_0} $, so that \eqref{non-expansivity-differences-3} yields
%
\begin{equation}\label{non-expansivity-differences-5}
\left\| w(t_2) \right\|_{p}^{p} + D \, p \, (p-1) \, \int_{t_1}^{t_2} \frac{\big\| \Phi_{p}^{R_0}(w(t)) \big\|_r^{\frac{2}{\vartheta}}}{\big\| \Phi_{p}^{R_0}(w(t)) \big\|_s^{\frac{2(1-\vartheta)}{\vartheta}}} \, dt \le \left\| w(t_1) \right\|_{p}^{p} \, ,
\end{equation} 
where from here on $ D>0 $ stands for a generic constant depending only on $ \| u_0 \|_1 , |\overline{u}_0| , m_1 , m_2 , c_1 , c_2 , \Omega  $, whereas $ r,s \ge 2 $  will be chosen below. By exploiting \eqref{lemma518} (with $ R=R_0 $) and the non-expansivity of the differences, from \eqref{non-expansivity-differences-5} we infer 
\begin{equation*}\label{non-expansivity-differences-6}
\frac{D \, \tilde{C}^{\frac{2}{\vartheta}} \, (p-1) }{p^{\frac{2}{\vartheta} \left( \vartheta + 1 \vee \frac{m_1-1}{2} \right)-1} \left( R_0+1 \right)^{\frac{(1-\vartheta)(m_1-1)}{\vartheta}} } \, (t_2-t_1) \, \frac{\left\| w(t_2) \right\|_{\frac{r p}{2}}^{\frac{p}{\vartheta}}}{\left\| w(t_1) \right\|_{\frac{s p}{2}}^{\frac{p(1-\vartheta)}{\vartheta}}} \le \left\| w(t_1) \right\|_{p}^{p} \, .
\end{equation*}
After some algebra, it is straightforward to check that the choices $ s=2 $ and $ r=2+4/N $ entail 
\begin{equation}\label{non-expansivity-differences-7}
\left\| w(t_2) \right\|_{\frac{N+2}{N} \, p} \le D^{\frac{\log p}{p}} \, (t_2-t_1)^{-\frac{N}{(N+{2}) p}} \left\| w(t_1) \right\|_{p} \, ,
\end{equation} 
and by iterating \eqref{non-expansivity-differences-7} similarly to the proof of \cite[Theorem 5.10]{GM13} one ends up with
%
\begin{equation*}\label{non-expansivity-differences-8}
\left\| w(t) \right\|_\infty \le D \left( t - 1/2 \right)^{-\frac{N}{4}} \left\| w(1/2) \right\|_2 \le D \, t^{-\frac{N}{4}} \left\| w_0 \right\|_2  \quad \forall t \ge 1 \, .
\end{equation*} 
Estimate \eqref{eq: smooth} then just follows by standard interpolation plus $t/2$-shift arguments. One can choose the multiplicative constant to be increasing w.r.t.~$ \| u_0 \|_1 $ due to the method of proof, since the same property holds for the r.h.s.~of \eqref{thm01-smooth-est} and for $ R_0 $. As for the dependence on $ |\overline{u}_0| $, analogous remarks apply.
\end{proof}

We can finally establish \emph{exponential} uniform convergence to the mean value, in contrast with the zero-mean case. 
\begin{proof}[Proof of Theorem \ref{thm-asym-nonzero}] 
We follow closely the strategy used in the proofs of \cite[Theorems 4.3 and 5.11]{GM13}. 
Firstly, from \eqref{eq: smooth}
we know there exists $ t_0 \ge 1 $, depending on $ \| u_0 \|_1 , |\overline{u}_0| , m_1, m_2, c_1, c_2, \Omega $, such that 
\begin{equation}\label{inf-est}
\left| u(x,t) \right| \ge \frac{\left|\overline{u}_0\right|}{2} \quad \textrm{for a.e. } (x,t) \in \Omega \times \left( t_0,\infty \right) .
\end{equation} 
Upon taking advantage of \eqref{inf-est} in \eqref{non-expansivity-differences-3} (in the case $ p=2 $), the Poincar\'e inequality \eqref{dis-poin} and reasoning exactly as in the proof \cite[Theorem 4.3]{GM13}, we end up with 
\begin{equation}\label{exp-conv-L2-1}
\left\| u(t)-\overline{u}_0 \right\|_2 \le e^{-\frac{\tilde{c} \, \left| \overline{u}_0 \right|^{m_1-1}}{2^{m_1-1} C_P^2}\,(t-t_0)} \left\| u(t_0)-\overline{u}_0 \right\|_2 \quad \forall t \ge t_0 \, .
\end{equation} 
If we exploit \eqref{eq: smooth} between $ t $ and $t-1$, use \eqref{exp-conv-L2-1} and the smoothing effect \eqref{thm01-smooth-est}, we obtain: 
\begin{equation}\label{exp-conv-L2-2}
\left\| u(t)-\overline{u}_0 \right\|_\infty \le K_0 \, e^{-\frac{\tilde{c} \, \left| \overline{u}_0 \right|^{m_1-1}}{2^{m_1-1} C_P^2}\,t} \quad \forall t \ge t_0 + 1 \, ,
\end{equation} 
for another $ K_0>0 $ depending on the same quantities as the one in \eqref{eq: smooth}. In view of \eqref{exp-conv-L2-2} and the fact that $ \phi $ is $C^2$ in a neighbourhood of $ \overline{u}_0 $, we can infer that there exist positive constants $t_1=t_1(\| u_0 \|,\overline{u}_0,\phi,\Omega)$, $ K_1=K_1(\| u_0 \|,\overline{u}_0,\phi,\Omega) $ and $ M=M(|\overline{u}_0|,m_1,\tilde{c},C_P) $ such that 
\begin{equation}\label{exp-conv-L2-phi}
\phi^\prime(u(x,t)) \ge \phi^\prime\!\left( \overline{u}_0 \right) - K_1 \, e^{-M t}  \quad \forall t \ge t_1 \, .
\end{equation}
By plugging \eqref{exp-conv-L2-phi} in \eqref{non-expansivity-differences} (with $p=2$) and carrying out similar computations to those performed in the proof of \cite[Theorem 4.3]{GM13}, we infer the inequality 
\begin{equation}\label{exp-conv-L2-3}
\left\| u(t)-\overline{u}_0 \right\|_2 \le e^{-\frac{1}{C_P^2}\int_{t_1}^t \left[ \phi^\prime\!\left( \overline{u}_0 \right) - K_1 \, e^{-Ms} \right] \, ds}  \left\| u(t_1)-\overline{u}_0 \right\|_2 \quad \forall t \ge t_1 \, .
\end{equation} 
It is then apparent that \eqref{est-smooth} is a consequence of \eqref{eq: smooth} applied between $t$ and $t-1$, \eqref{exp-conv-L2-3}, again \eqref{eq: smooth} and the non-expansivity of the differences. As for the dependence of the multiplicative constant on $ \| u_0 \|_1 $ and $ |\overline{u}_0| $, the same comments as in the end of the proof of Lemma \ref{lem: smooth-rel} hold.
\end{proof}

\begin{oss}[Asymptotics for less regular $\phi$]\label{mod-cont}\rm
For simplicity, in Theorem \ref{thm-asym-nonzero} we assumed that $ \phi $ is $ C^2 $ away from $0$. However, as it can be guessed from the above proof, the result continues to hold under the milder hypothesis
$$ \int_0^1 \frac{\omega(r)}{r} \, dr < \infty \, ,  $$
where $ \omega : \mathbb{R}^+ \mapsto \mathbb{R}^+ $ is the local modulus of continuity of $ \phi^\prime $ in $ \mathbb{R}\setminus\{ 0 \} $.
\end{oss}

\noindent \textbf{Acknowledgements.} The second author wishes to thank Prof.~Giuseppe Savar\'e for some helpful discussions related to Section \ref{sect: well}. Both authors thank the ``Gruppo Nazionale per l’Analisi Matematica, la Probabilit\`a e le loro Applicazioni'' (GNAMPA, Italy) of the ``Istituto Nazionale di Alta Matema\-tica'' (INdAM, Italy) for support. They are also grateful to the anonymous referees for many valuable suggestions, which hopefully helped improve this work.

\bibliographystyle{plainnat}


\end{document}